\newcommand{\R}{\mathbb{R}}
\newcommand{\Conf}{\mathrm{Conf}}
\newcommand{\vect}{\mathrm{vect}}
\newcommand{\Mink}{\mathrm{Mink}}
\newcommand{\hy}{\mathbb{H}}
\newcommand{\eeu}{\widetilde{Ein}_{1,n-1}}
\begin{document}

\newtheorem{theorem}{Theorem}

\newtheorem{definition}{Definition}

\newtheorem{lemma}{Lemma}

\newtheorem{fact}{Fact}

\newtheorem{criterion}{Criterion}

\newtheorem{remark}{Remark}

\newtheorem{example}{Example}

\newtheorem{proposition}{Proposition}

\newtheorem{corollary}{Corollary}

\begin{center}
\Large \textbf{Causal completion of a globally hyperbolic conformally flat spacetime}
\end{center}

\begin{center}
Rym SMAÏ 
\end{center}

\paragraph{Abstract.} In \cite{Kronheimer}, Geroch, Kronheimer and Penrose introduced a way to attach ideal points to a spacetime $M$, defining the \emph{causal completion} of $M$. They established that this is a topological space which is Hausdorff when $M$ is globally hyperbolic. In this paper, we prove that if, in addition, $M$ is \emph{simply-connected} and \emph{conformally flat}, its causal completion is a topological manifold with boundary homeomorphic to $S \times [0,1]$ where $S$ is a Cauchy hypersurface of $M$. We also introduce three remarkable families of globally hyperbolic conformally flat spacetimes and provide a description of their causal completions.

\section{Introduction}

An important concept in Lorentzian geometry is \emph{causality}. Indeed, the tangent vectors to a Lorentzian manifold split into three classes: those of negative, positive and null norm, known as \emph{timelike, spacelike} and \emph{lightlike} vectors respectively. The curves whose tangent vectors are timelike or lightlike are called \emph{causal curves}. The causal structure describes which points of the manifold can be connected, or not, by a causal curve. Lorentzian manifolds under consideration are usually oriented and time-oriented, referred to as \emph{spacetimes}. A~time-orientation provides an orientation for every causal curve, making it either future-directed or past-directed. 

Among causal properties, special importance is given to \emph{global hyperbolicity}. This is a standard assumption for spacetimes considered as cosmological models in general relativity. According to a classical theorem by Geroch \cite{Geroch1970}, a spacetime is globally hyperbolic (abbrev. GH) if it admits a topological hypersurface that is intersected exactly once by every inextensible causal curve, known as \emph{a Cauchy hypersurface}. It turns out that Cauchy hypersurfaces of a GH spacetime are homeomorphic.

In \cite{Kronheimer}, Geroch, Kronheimer and Penrose defined how to attach to a spacetime $M$ ideal points. Their construction is entirely based on the causal structure of $M$ and formalizes the concept of \emph{endpoints at infinity} of inextensible causal curves.  Those in the future form \emph{the future causal boundary} while those in the past form \emph{the past causal boundary} of~$M$. The union of $M$, its future and its past causal boundary define \emph{the causal completion} of~$M$. The authors proved that the causal completion is a topological space in which $M$ is a dense open subset. Moreover, it is Hausdorff when $M$ is GH. It is worth noting that, in general, the topology of the causal completion can be quite intricate (see Section \ref{sec: causal completion in the conformally flat setting}). 

In this paper, we explore the causal completion of a GH \emph{conformally flat} spacetime of dimension $n \geq 3$. These are locally homogeneous manifolds, no longer equipped with a single Lorentzian metric but with \emph{a conformal class} of Lorentzian metrics. The model space of these geometric structures is the Lorentzian analogue of the conformal sphere, the so-called \emph{Einstein universe} $Ein_{1,n-1}$; its group of conformal transformations is the linear group $O(2,n)$. Notice that there is still a notion of causality in this setting since the sign of the norm of a tangent vector is invariant under conformal changes of metrics. In Section \ref{sec: causal completion in the conformally flat setting}, we prove the following result:

\begin{theorem} \label{intro: main theorem}
Let $M$ be a simply-connected conformally flat GH spacetime of dimension $n \geq 3$ without conjugate points. The causal completion of $M$ is a topological manifold with boundary, homeomorphic to $S \times [0,1]$ where $S$ is any Cauchy hypersurface of $M$.
\end{theorem}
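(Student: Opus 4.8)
The plan is to exploit the very rigid structure of simply-connected conformally flat GH spacetimes, which by developing-map arguments should embed conformally into the model space, the universal cover of the Einstein universe $\eeu$. The first step is to realize $M$ as (conformally equivalent to) an open domain $\U$ in $\eeu$ whose causal structure is controlled by the conformal developing map; the hypotheses ``simply-connected'' and ``without conjugate points'' are exactly what I expect to be needed to guarantee that this developing map is a causal embedding (injective and causally faithful), so that the abstract GKP causal completion of $M$ agrees with a concrete, computable causal completion of a domain in $\eeu$. In the Einstein universe the lightcones and the global causal order are explicit (the conformal boundary of Minkowski space sits inside $Ein_{1,n-1}$), so once $M$ is identified with such a domain, the ideal points become points lying on the geometric frontier of $\U$.

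Next I would build the two halves of the boundary separately. For the future causal boundary, one uses that in a GH spacetime every future-inextensible causal curve has a well-defined ``endpoint at infinity''; in the model $\eeu$ this endpoint is literally a point of the conformal frontier of $\U$. I expect that the future ideal points are parametrized by the Cauchy hypersurface $S$ itself: fixing a Cauchy time function and pushing $S$ to the future along the causal directions, each inextensible future-directed causal line emanating from a point of $S$ limits to a unique ideal endpoint, giving a bijection between the future causal boundary and $S$. The no-conjugate-points hypothesis is what prevents distinct lightlike geodesics from refocusing, ensuring this endpoint map is injective and continuous, hence a homeomorphism onto its image. Symmetrically, the past causal boundary is homeomorphic to $S$. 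Together with $M \cong S \times (0,1)$ (the product decomposition of a GH spacetime given by Cauchy time, which is standard), gluing the two copies of $S$ as the two ends yields the candidate model $S \times [0,1]$.

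The heart of the argument, and the main obstacle, is not the set-level bijection but the topological identification: I must show that the GKP topology on the causal completion coincides, near the boundary, with the product topology on $S \times [0,1]$, and in particular that the completion is a \emph{manifold with boundary} rather than merely a compactification with a wild frontier. Concretely I would check that a sequence $x_k \in M \cong S \times (0,1)$ converges to a future ideal point in the GKP sense precisely when its $S$-coordinate converges in $S$ and its time coordinate tends to $1$. This is where conformal flatness does the real work: it lets me transport the convergence of sequences of ideal points to convergence of the corresponding frontier points of $\U$ in $\eeu$, where the topology is the familiar manifold topology, and it rules out the pathological identifications (ideal points being ``crushed together'' or boundary charts failing to be Euclidean) that make general causal completions non-manifold. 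I would establish local charts at each boundary point by straightening the Cauchy time function together with a spacelike chart of $S$, using the explicit Minkowski-in-$Ein$ picture to verify that the resulting map is a homeomorphism onto a half-space neighborhood.

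Finally I would assemble the pieces: the interior $M$ is a manifold, each boundary face is homeomorphic to $S$, the charts constructed above cover a neighborhood of the boundary and overlap compatibly, and the Hausdorffness guaranteed by the GKP theorem for GH spacetimes ensures the glued space is a genuine Hausdorff manifold with boundary. The two boundary copies of $S$ together with the interior then give a homeomorphism with $S \times [0,1]$, sending $S \times \{0\}$ to the past causal boundary and $S \times \{1\}$ to the future causal boundary. I anticipate that the bulk of the technical effort lies in the continuity and properness of the endpoint maps and in verifying the boundary charts, so I would isolate those as separate lemmas (identifying $M$ with a domain in $\eeu$, constructing the endpoint homeomorphism, and producing boundary charts) before combining them into the final statement.
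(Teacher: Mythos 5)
Your first step---realizing $M$ as a domain $\U \subset \eeu$ via the developing map---is the crux, and it fails: simple connectedness together with the absence of conjugate points does \emph{not} make the developing map injective. For a simply-connected conformally flat spacetime the developing map $D: M \to \eeu$ is only a local diffeomorphism, and there are GH examples (e.g.\ spacetimes whose Cauchy hypersurfaces develop non-injectively onto $\mathbb{S}^{n-1}$) where $D$ is not an embedding, so the abstract GKP completion of $M$ cannot be read off from a frontier in $\eeu$. This is precisely the difficulty the paper's proof is designed to circumvent: instead of $\eeu$, it uses the \emph{enveloping space} $E(M) = \mathcal{B} \times \R$ of \cite{smai2023enveloping}, where $\mathcal{B}$ is a conformally flat Riemannian manifold diffeomorphic to a Cauchy hypersurface (in general not an open subset of the round sphere), in which $M$ embeds conformally as a \emph{causally convex} open set $\Omega$ bounded by the graphs of two $1$-Lipschitz functions $f^\pm$ over an open set $U \subset \mathcal{B}$. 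The causal boundary is then identified with these graphs, and the reduction to the explicit geometry of $\eeu$ is done only \emph{locally}: for a TIP $I^-(\gamma,\Omega)$ one restricts to $I^+(p_0,\Omega)$ for $p_0 \in \gamma$, where global hyperbolicity forces the developing map to be injective with causally convex image in $\eeu$, and there the model computation applies.

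Two further points in your outline would need repair even granting an embedding. First, you never invoke causal convexity of the image domain, yet it is essential for surjectivity of the endpoint map: it is what guarantees that an inextensible timelike curve of $\U$ is the trace of an inextensible timelike curve of the ambient space, whose unique crossing of the graph of $f^+$ supplies the ideal endpoint; for a general open domain this breaks down. Second, injectivity of $p \mapsto I^-(p) \cap \Omega$ does not come from conjugate points preventing geodesic refocusing, as you suggest, but from the distinguishing property of the ambient spacetime ($I^-(p) = I^-(q) \Rightarrow p = q$); the no-conjugate-points hypothesis instead rules out degenerate completions such as that of $\eeu$ itself, whose future and past causal boundaries are single points. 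Your remaining steps (bijection, then matching the GKP topology with the product topology via the graphs, giving $S \times [0,1]$) do track the paper's Section 4.1 in spirit, but without the enveloping space the argument does not get off the ground.
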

 
The assumptions \emph{simply-connected} and \emph{without conjugate points} (see Section \ref{sec: conformally flat spacetimes} for the definition) are natural in the sense that if one of them is not satisfied, we construct easily counterexamples. In fact, Theorem \ref{intro: main theorem} still holds for the larger class of \emph{developable} spacetimes (see Section \ref{sec: conformally flat spacetimes}).

The proof of Theorem \ref{intro: main theorem} centrally relies on the notion of the \emph{enveloping space of a simply-connected GH conformally flat spacetime}, introduced in a previous paper \cite{smai2023enveloping}.
We established that $M$ can be embedded in a conformally flat spacetime \mbox{$E(M) = \mathcal{B} \times \R$,} where $\mathcal{B}$ is a conformally flat Riemannian manifold diffeomorphic to any Cauchy hypersurface of $M$. This embedding is realized as the domain bounded by the graphs of two real functions, $f^+$ and $f^-$, defined on an open subset of $\mathcal{B}$. The space $E(M)$ is referred to as \emph{an enveloping space of $M$}.
The proof of Theorem \ref{intro: main theorem} consists in proving that the graphs of $f^+$ and $f^-$ precisely correspond to the future and past causal boundary of $M$.
 
\subsection{Application: equivalence between $\mathcal{C}_0$-maximality and $\mathrm{C}$-maximality} \label{intro: application}


There is a natural partial order relation on globally hyperbolic conformal spacetimes. Given two GH conformal spacetimes $M$ and $N$, we say that $N$ is a Cauchy-extension of $M$ if there exists a conformal embedding $f$ from $M$ to $N$ sending every Cauchy hypersurface of $M$ on a Cauchy hypersurface of $N$. The map $f$ is called \emph{a conformal Cauchy-embedding}. A GH conformal spacetime $M$ is said \emph{$\mathrm{C}$-maximal} if every conformal Cauchy-embedding from $M$ to a GH conformal spacetime $N$ is surjective. Given a GH conformal spacetime $M$, the existence of a $\mathrm{C}$-maximal Cauchy-extension of $M$ is \emph{a priori} not insured. Nevertheless, it is within the category of conformally flat spacetimes. More precisely, a GH \emph{conformally flat} spacetime $M$ is said \emph{$\mathcal{C}_0$-maximal} if every conformal Cauchy-embedding from $M$ to a GH \emph{conformally flat} spacetime is surjective. A result due to C. Rossi \cite{Salvemini2013Maximal} states that any conformally flat spacetime admits a $\mathcal{C}_0$-maximal Cauchy-extension and furthermore, this extension is unique up to conformal diffeomorphism. We provided a new proof of this result, which involves the notion of enveloping space as detailed in \mbox{\cite[Sec. 5]{smai2023enveloping}.} 

A natural question arises: Is the $\mathcal{C}_0$-maximal extension also $\mathrm{C}$-maximal? \emph{A priori}, there is no reason for this to be true. However, we establish that it is indeed the case.

\begin{theorem} \label{intro: C_0-maximality implies C-maximality}
Let $M$ be a GH conformally flat spacetime. If $M$ is $\mathcal{C}_0$-maximal, it is $\mathrm{C}$-maximal.
\end{theorem}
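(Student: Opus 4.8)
The plan is to reduce Theorem \ref{intro: C_0-maximality implies C-maximality} to a single geometric statement, which I will call the \emph{rigidity lemma}: \emph{any conformal Cauchy-extension of a GH conformally flat spacetime is itself conformally flat}. Granting this, the theorem is immediate. If $f : M \to N$ is a conformal Cauchy-embedding into a GH conformal spacetime $N$, the rigidity lemma makes $N$ conformally flat, so $f$ is in fact a conformal Cauchy-embedding into a GH \emph{conformally flat} spacetime; $\mathcal{C}_0$-maximality of $M$ then forces $f$ to be surjective, which is exactly $\mathrm{C}$-maximality. Thus the whole content lies in the rigidity lemma, and it is there that conformal flatness must be propagated from $M$ to all of $N$.

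To prove the rigidity lemma I would first fix a Cauchy time function on $M$, giving a foliation $M = \bigcup_{t \in \R} S_t$ by Cauchy hypersurfaces, and identify $M$ with $f(M) \subseteq N$. By hypothesis each $f(S_t)$ is a Cauchy hypersurface of $N$, so $\{f(S_t)\}$ is a monotone family of Cauchy hypersurfaces of $N$ foliating the open set $f(M)$. The first step is to understand $N \setminus f(M)$: taking an inextensible causal curve through a point $p \notin f(M)$ and using that it meets every $f(S_t)$ exactly once, a monotonicity argument shows that $p$ lies in the common future $\bigcap_t J^+(f(S_t))$ or the common past $\bigcap_t J^-(f(S_t))$ of the whole family. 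In other words, every point of $N$ outside $M$ sits beyond the future or the past causal boundary of $M$ and is naturally indexed by a Geroch--Kronheimer--Penrose ideal point of $M$. Since conformal flatness is a local property and $f(M)$ is already conformally flat, it then suffices to produce, around each such $p$, one open neighborhood in $N$ that is conformally flat.

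For this I would bring in the enveloping space $E(M) = \mathcal{B} \times \R$ of \cite{smai2023enveloping} (after reducing to the simply-connected case by passing to universal covers and carrying the deck group along equivariantly). In $E(M)$ the future and past causal boundaries of $M$ are realized as the graphs $f^+$ and $f^-$ sitting inside the genuine conformally flat spacetime $E(M)$, so the ideal point indexing $p$ corresponds to an honest point $\bar p$ of $E(M)$ admitting a conformally flat neighborhood. The heart of the argument is to exhibit a causal isomorphism between a neighborhood of $p$ in $N$ and a neighborhood of $\bar p$ in $E(M)$ restricting to the canonical identification on the common part of $M$: both neighborhoods contain the same open piece $J^-(p) \cap M$ (resp. $J^+(p)\cap M$) of $M$, and the causal relations among the new points are, on both sides, determined by the way causal curves of $M$ accumulate at the corresponding ideal points, hence coincide. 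Once such a causal isomorphism is in hand, the Hawking--King--McCarthy--Malament theorem (applicable since GH spacetimes are distinguishing and $n \geq 3$) upgrades it to a conformal diffeomorphism, transporting conformal flatness from $E(M)$ to a neighborhood of $p$. Covering $N$ by $f(M)$ together with these neighborhoods then yields that $N$ is conformally flat.

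I expect the main obstacle to be precisely this construction of the causal isomorphism near a boundary point $p$: one must control the \emph{full} causal neighborhood of $p$ in $N$ --- not merely its past or its future --- and check that it is causally modeled on $E(M)$, which amounts to showing that the way $N$ fills in an ideal point is rigidly prescribed by the causal structure of $M$. Two supporting technical points also need care: the reduction to the simply-connected (developable) situation so that the enveloping-space description, and if needed the causal-completion description of Theorem \ref{intro: main theorem}, apply equivariantly; and the verification that the conformally flat charts produced around the various boundary points are mutually compatible and glue with $f(M)$ into a global conformally flat structure on $N$. Modulo these points the rigidity lemma follows, and with it the theorem.
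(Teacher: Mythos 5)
There is a genuine gap, and it sits exactly where you locate your ``main obstacle'': the rigidity lemma your whole reduction rests on --- \emph{every conformal Cauchy-extension of a GH conformally flat spacetime is conformally flat} --- is false as stated. Take $M = \R^{n-1} \times (-\infty,0)$ with the flat metric $-dt^2 + h$, and let $N = \R^{n-1} \times \R$ carry $-dt^2 + h_t$ where $h_t = h$ for $t \leq 0$ and $h_t$ is a compactly supported perturbation with non-vanishing Weyl (resp.\ Cotton, if $n=3$) tensor for $t \in (1,2)$; such an $N$ is globally hyperbolic with Cauchy slices $\{t = c\}$. Since $t$ is strictly monotone along causal curves, $\{t<0\}$ is causally convex in $N$, and every inextensible causal curve of $N$ meets $M$ in an inextensible causal curve of $M$; hence every Cauchy hypersurface of $M$ is a Cauchy hypersurface of $N$, and the inclusion is a conformal Cauchy-embedding of a conformally flat spacetime into a non-conformally-flat one. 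The same example destroys the key step of your proof attempt: the causal relations among points of $N$ beyond $\partial^+ f(M)$ are \emph{not} determined by how causal curves of $M$ accumulate at ideal points --- beyond a Cauchy hypersurface the metric of $N$ can be prescribed freely --- so the causal isomorphism with a neighborhood of $\bar p$ in $E(M)$ cannot be constructed, and Malament's theorem has no input. (A smaller issue: for points $p$ deep inside $N \setminus \overline{f(M)}$, $f^{-1}(I^-(p)\cap f(M))$ need not be indecomposable, so your indexing of all of $N\setminus f(M)$ by ideal points already fails; the paper's Proposition \ref{prop: Cauchy-embeddings and causal boundary} only covers $p \in \partial^\pm f(M)$.) The structural flaw is that your argument defers $\mathcal{C}_0$-maximality until \emph{after} the lemma, whereas the counterexample shows the lemma cannot hold without it; and once $\mathcal{C}_0$-maximality is assumed, the lemma is equivalent to surjectivity of $f$, i.e.\ to the theorem itself, so the reduction buys nothing.

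For contrast, the paper never tries to show $N$ is conformally flat. It proves a compactness criterion (Proposition \ref{criterion of maximality}): if every TIP and TIF of $M$ meets a non-compact Cauchy hypersurface $S$ in a non-compact set, then $M$ is $\mathrm{C}$-maximal, because a point $p \in \partial^+ f(M)$ of a non-surjective Cauchy-embedding defines a TIP $f^{-1}(I^-(p)\cap f(M))$ whose trace on $f(S)$ lies in the compact set $J^-(p) \cap f(S)$. The hypothesis of $\mathcal{C}_0$-maximality enters precisely to verify this criterion: by Fact \ref{fact: eikonal}, $\mathcal{C}_0$-maximality of $\tilde M$ forces each point of the graphs $f^\pm$ bounding $\tilde M$ in its enveloping space to emit a lightlike ray contained in the graph with no endpoint there, and by Proposition \ref{prop: causal boundary general conf. flat setting} these graphs \emph{are} the causal boundary, so every TIP/TIF trace on $S$ is non-compact; the Cauchy-compact case ($\tilde M \simeq \eeu$) is handled separately via the edgeless-boundary argument, and one descends from $\tilde M$ to $M$ by lifting the embedding. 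If you want to salvage your strategy, the missing ingredient is exactly this eikonal property of $f^\pm$: it, and not any causal rigidity of the extension, is what obstructs $N$ from filling in an ideal point.
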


The converse assertion is obviously true: any $\mathrm{C}$-maximal conformally flat spacetime is $\mathcal{C}_0$-maximal. Thus, Theorem \ref{intro: C_0-maximality implies C-maximality} says that the notions of $\mathcal{C}_0$-maximality and $\mathrm{C}$-maximality are equivalent for conformally flat spacetimes. Hence, a $\mathcal{C}_0$-maximal spacetime will be simply referred to as \emph{maximal} (abbrev. GHM) - while keeping in mind that it is for the ordering relation defined by \emph{conformal} Cauchy-embeddings.

This result was initially proved by Rossi in her thesis (see \cite[Chap. 7, Sec. 2]{salveminithesis}). The key idea of the proof is that the $\mathrm{C}$-maximality of a conformal spacetime~$M$ can be characterized by the points of its causal boundary (see Section \ref{sec: criterion of maximality}). When $M$ is conformally flat, we prove that the description of the causal completion of $M$ given by Theorem \ref{intro: main theorem} immediately implies Theorem \ref{intro: C_0-maximality implies C-maximality}.

\subsection{Future work: complete photons}

In \cite{witten1990structure}, Witten proposed the problem of classifying GH spacetimes of constant curvature, and more generally conformally flat spacetimes. GH spacetimes of constant curvature have been extensively studied by several authors as Mess, Scannell, Barbot, Mérigot and \mbox{Bonsante.} A direction still little investigated is the study of GH \emph{conformally flat} spacetimes. The first results on this topic are attributed to C. Rossi. A fundamental result of hers states that a GHM conformally flat spacetime whose universal cover admits conjugate points is a finite quotient of the universal cover of Einstein universe \mbox{(see \cite[Theorem 10]{Salvemini2013Maximal}).}

Rossi's result completely classifies GHM conformally flat spacetimes with conjugate points. In continuation, we investigate GHM conformally flat spacetimes \emph{without conjugate points}. An intriguing scenario arises when the universal cover of the spacetime contains \emph{complete} photons (see Definition \ref{def: complete photons}). In Section \ref{sec: examples}, we outline three notable families of such spacetimes and provide descriptions of their causal boundaries.
 
A strategy for studying globally hyperbolic (GH) conformally flat spacetimes with complete photons involves exploring the points of the causal boundary that serve as endpoints for complete photons, called \emph{complete ideal points}. In a forthcoming paper, the description of the causal boundary provided by Theorem \ref{intro: main theorem} will allow us to conduct an in-depth study of the complete ideal points.

\subsection*{Overview of the paper}

Section \ref{sec: causally convex subsets of eeu} provides an overview of basic notions in Lorentzian geometry, Einstein universe, and conformally flat Lorentzian structures. We also recall the notion of enveloping space introduced in \cite[Sec. 4]{smai2023enveloping}. In Section \ref{sec: causal completion}, we define the causal completion of a GH spacetime and we prove that it is a topological space which is Hausdorff. Section \ref{sec: causal completion in the conformally flat setting} is dedicated to the proof of Theorem \ref{intro: main theorem} while Section \ref{sec: maximality} focuses on the proof of Theorem~\ref{intro: C_0-maximality implies C-maximality}. Last but not least, we introduce in Section \ref{sec: examples} three remarkable families of globally hyperbolic maximal conformally flat spacetimes and we describe their causal completion.


\subsection*{Acknowledgement} I would like to express my gratitude to my PhD advisor for the insightful discussions that made this work possible, for his valuable remarks, and for his guidance. I am also thankful to Charles for his interest in my work and for his thoughtful comments and suggestions. This work of the Interdisciplinary Thematic Institute IRMIA++, as part of the ITI 2021-2028 program of the University of Strasbourg, CNRS and Inserm, was supported by IdEx Unistra (ANR-10-IDEX-0002), and by SFRI-STRAT’US project (ANR-20-SFRI-0012) under the framework of the French Investments for the Future Program. 

\section{Einstein universe and conformally flat spacetimes} \label{sec: conformally flat spacetimes}

This section introduces the model of conformally flat Lorentzian structures, the so-called \emph{Einstein universe}. Let us start with some basics in Lorentzian geometry.


\subsection{Preliminaries on Lorentzian geometry}

We give here a short exposition on the causality of spacetimes. We direct to \cite[Chap. 14]{oneill} for more details.

\paragraph{Spacetime.} A Lorentzian manifold is a smooth manifold of dimension $n$ equipped with a non-degenerate symmetric $2$-tensor $g$ of signature $(1,n-1)$.

In a Lorentzian manifold $(M,g)$, we say that a non-zero tangent vector $v$ is \emph{timelike, lightlike, spacelike} if $g(v,v)$ is respectively negative, zero, positive. The set of timelike vectors is the union of two convex open cones. When it is possible to make a continuous choice of a connected component in each tangent space, the manifold $M$ is said \emph{time-orientable}. The timelike vectors in the chosen component are said \emph{future-directed} while those in the other component are said \emph{past-directed}. A \emph{spacetime} is an oriented and time-oriented Lorentzian manifold. 

\paragraph{Future, past.} In a spacetime $M$, a differential curve is \emph{timelike, lightlike, spacelike} if its tangent vectors are timelike, lightlike, spacelike. It is \emph{causal} if its tangent vectors are either timelike or lightlike. 

Given a point $p$ in $M$, the \emph{future} (resp. \emph{chronological future}) of $p$, denoted $J^+(p)$ (resp. $I^+(p)$), is the set of endpoints of future-directed causal (resp. timelike) curves starting from $p$. More generally, the future (resp. chronological future) of a subset $A$ of $M$, denoted $J^+(A)$ (resp. $I^+(A)$), is the union of $J^+(a)$  (resp. $I^+(a)$) where $a \in A$. An open subset $U$ of $M$ is a spacetime and the intrinsic causality relations of $U$ imply the corresponding ones in $M$. We denote $J^+(A,U)$ (resp. $I^+(A,U)$) the future (resp. chronological future) in the manifold $U$ of a set $A \subset U$. Then, $I^+(A, U) \subset I^+(A) \cap U$. Dual to the preceding definitions are corresponding \emph{past} versions. In general, \emph{past} definitions and proofs follows from  future versions (and vice versa) by reversing time-orientation.

\paragraph{Achronal, acausal subsets.} A subset $A$ of a spacetime $M$ is called \emph{achronal} (resp. \emph{acausal}) if no timelike (resp. causal) curve intersects $A$ more than once.

A subset $A$ of $M$ is said to be \emph{edgeless} if for every $p \in A$, there exists an open neighborhood $U$ of $p$ such that:
\begin{itemize}
\item $U \cap A$ is achronal in $U$;
\item every causal curve contained in $U$ joining a point of $I^-(p,U)$ to a point of $I^+(p,U)$ intersects $U \cap A$.
\end{itemize}

\begin{example}
The subset $A = \{(0,x) \in \R^{1,1};\ x \in [0,1]\}$ of the $2$-dimensional Minkowski spacetime is not edgeless since the second condition is not satisfied at the points $(0,0)$ and $(0,1)$ as it is shown in Figure \ref{figure: edgeless}.
\end{example}

\begin{figure}[h!] 
\centering
\includegraphics[scale=1]{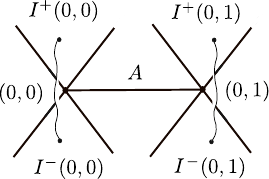}
\caption{Achronal subset of $\R^{1,1}$ which is not edgeless.} 
\label{figure: edgeless}
\end{figure}

\paragraph{Causal convexity.} In Riemannian geometry, it is often useful to consider open neighborhoods which are \emph{geodesically convex}. In Lorentzian geometry, there is, in addition, \emph{a causal convexity} notion.  A subset $U$ of $M$ is said \emph{causally convex} if for every $p, q \in U$, any causal curve of $M$ joining $p$ to $q$ is contained in $U$.

\paragraph{Global hyperbolicity.} A spacetime $M$ is said \emph{strongly causal} if for every point $p \in M$ and every neighborhood $U$ of $p$, there exists a neighborhood $V$ of $p$ contained in $U$, which is causally convex in $M$.

A spacetime $M$ is said \emph{globally hyperbolic} (abbrev. GH) if the two following conditions hold:
\begin{enumerate}
\item $M$ is strongly causal.
\item all the intersections $J^-(p) \cap J^+(q)$, where $p,q \in M$, are compact.
\end{enumerate}
By a classical theorem of Geroch \cite{Geroch1970}, a spacetime $M$ is GH if and only if it admits a topological hypersurface which is met by every inextensible causal curve exactly once. This result was improved by Bernal and Sanchez \cite{BernalSanchez}: they proved that GH spacetimes admit \emph{smooth} Cauchy hypersurfaces. It turns out that the Cauchy hypersurfaces of a GH spacetime are all homeomorphic (even diffeomorphic if they are smooth). Thus, if one of them is compact, they are all compact. In this case, the spacetime is said \emph{Cauchy-compact}.

\paragraph{Conformal spacetimes.} Two Lorentzian metrics $g$ and $g'$ on a manifold $M$ are said \emph{conformally equivalent} if there is a smooth function $f$ from $M$ to $\R$ such that $g' = e^f g$. The conformal class of $g$ is the set of Lorentzian metrics conformally equivalent to~$g$. 

Causality is a conformal notion. Indeed, given a Lorentzian manifold $(M,g)$, the type of a tangent vector to $M$ depends only the conformal class of $g$. Then, it is relevant to consider manifolds equipped with a conformal class of Lorentzian metrics.
We call \emph{conformal spacetime} an oriented smooth manifold equipped with a conformal class of Lorentzian metrics and a time-orientation. Let us point out that, in general, geodesics are not preserved by conformal changes of metrics, except lightlike geodesics as non-parametrized curves.

\subsection{Geometry of Einstein universe} \label{sec: Einstein universe}

Let $\R^{2,n}$ be the vector space $\R^{n+2}$ of dimension $(n+2)$ equipped with the nondegenerate quadratic form $q_{2,n}$ of signature $(2,n)$ given by
\begin{align*}
q_{2,n}(u,v,x_1,\ldots, x_n) &= -u^2 - v^2 + x_1^2 + \ldots + x_n^2
\end{align*}
in the coordinate system $(u,v,x_1,\ldots,x_n)$ associated to the canonical basis of $\R^{n+2}$.

\paragraph{The Klein model.} \emph{Einstein universe} of dimension $n$, denoted by $\mathsf{Ein}_{1,n-1}$, is the space of isotropic lines of $\R^{2,n}$ with respect to the quadratic form $q_{2,n}$, namely
\begin{align*}
\mathsf{Ein}_{1,n-1} &= \{[x] \in \mathbb{P}(\R^{2,n}):\ q_{2,n}(x) = 0\}.
\end{align*}
In practice, it is more convenient to work with the double cover of the Einstein universe, denoted by $Ein_{1,n-1}$:
\begin{align*}
Ein_{1,n-1} &= \{[x] \in \mathbb{S}(\R^{2,n}):\ q_{2,n}(x) = 0\}
\end{align*}
where $\mathbb{S}(\R^{2,n})$ is the sphere of rays, namely the quotient of $\R^{2,n} \backslash \{0\}$ by positive homotheties. 

\paragraph{Conformal structure.} The choice of a \emph{timelike} $2$-plane of $\R^{2,n}$, i.e. a $2$-plane on which the restriction of $q_{2,n}$ is negative definite, defines \emph{a spatio-temporal decomposition} of Einstein universe:

\begin{lemma}
Any timelike plane $P \subset \R^{2,n}$ defines a diffeomorphism between $\mathbb{S}^{n-1} \times \mathbb{S}^1$ and $Ein_{1,n-1}$.
\end{lemma}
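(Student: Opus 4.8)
The plan is to use the timelike plane $P$ to split $\R^{2,n}$ orthogonally and then read off the product structure directly from the isotropy equation. First I would observe that since $q_{2,n}$ is negative definite on $P$, the plane $P$ is nondegenerate, so $\R^{2,n} = P \oplus P^{\perp}$ is an orthogonal direct sum and $q_{2,n}$ restricts nondegenerately to $P^{\perp}$. Comparing signatures, $P$ carries signature $(2,0)$, hence $P^{\perp}$, of dimension $n$, must carry signature $(0,n)$; that is, $q_{2,n}|_{P^{\perp}}$ is positive definite. Thus $-q_{2,n}|_{P}$ endows the $2$-plane $P$ with a Euclidean structure and $q_{2,n}|_{P^{\perp}}$ endows the $n$-space $P^{\perp}$ with a Euclidean structure; I denote the associated norms by $\|\cdot\|_{P}$ and $\|\cdot\|_{\perp}$.

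Next I would write every $x \in \R^{2,n}$ uniquely as $x = a + b$ with $a \in P$ and $b \in P^{\perp}$, so that $q_{2,n}(x) = -\|a\|_{P}^2 + \|b\|_{\perp}^2$. The isotropy condition $q_{2,n}(x) = 0$ then reads $\|a\|_{P} = \|b\|_{\perp}$, and for $x \neq 0$ this common value is strictly positive. Passing to the sphere of rays $\mathbb{S}(\R^{2,n})$ only allows rescaling by a positive factor, which multiplies $\|a\|_{P}$ and $\|b\|_{\perp}$ by the same amount; hence each isotropic ray has a unique representative with $\|a\|_{P} = \|b\|_{\perp} = 1$. The set of such $a$ is the unit circle of $(P, \|\cdot\|_{P})$, diffeomorphic to $\mathbb{S}^1$, and the set of such $b$ is the unit sphere of $(P^{\perp}, \|\cdot\|_{\perp})$, diffeomorphic to $\mathbb{S}^{n-1}$.

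This suggests defining $\Phi \colon \mathbb{S}^{n-1} \times \mathbb{S}^1 \to Ein_{1,n-1}$ by $\Phi(\sigma, \tau) = [\sigma + \tau]$, where $\sigma$ and $\tau$ are viewed as unit vectors in $P^{\perp}$ and $P$ respectively; the computation above shows $\sigma + \tau$ is isotropic and nonzero, so $\Phi$ is well defined, and it is manifestly smooth. For the inverse I would set $\Psi([x]) = (b/\|b\|_{\perp},\, a/\|a\|_{P})$, where $x = a + b$ is the decomposition of any representative of the ray; this is independent of the chosen representative since a positive rescaling cancels in each quotient, and it is smooth because both denominators are nonvanishing on $Ein_{1,n-1}$. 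Checking that $\Phi$ and $\Psi$ are mutually inverse is then immediate from the normalization established in the previous step, which yields the desired diffeomorphism.

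The only genuinely substantive point is the signature computation establishing that $P^{\perp}$ is positive definite; everything else is formal linear algebra together with the remark that positive rescaling acts diagonally on the two Euclidean norms. I expect the verification of smoothness of $\Phi$ and $\Psi$ to be routine, since both are given by polynomial and rational expressions in the coordinates with controlled denominators.
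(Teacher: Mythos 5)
Your proof is correct and takes essentially the same approach as the paper: the orthogonal splitting $\R^{2,n} = P \oplus P^{\perp}$, the normalization of isotropic vectors to unit norm in both factors, and the restriction of the projection to the sphere of rays is exactly the paper's map. You simply make explicit the signature computation, the inverse $\Psi$, and the smoothness verification that the paper leaves as ``easy to check.''
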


\begin{proof}
Consider the orthogonal splitting $\R^{2,n} = P^\perp \oplus P$ and call $q_{P^\perp}$ and $q_{P}$ the positive definite quadratic form induced by $\pm q_{2,n}$ on $P^\perp$ and $P$ respectively. The restriction of the canonical projection $\R^{2,n} \backslash \{0\}$ on $\mathbb{S}(\R^{2,n})$ to the set of points $(x,y) \in P^\perp \oplus P$ such that $q_{P^\perp}(x) = q_P(y) = 1$ defines a map from $\mathbb{S}^{n-1} \times \mathbb{S}^1$ to $Ein_{1,n-1}$. It is easy to check that this map is a diffeomorphism.
\end{proof}

For every timelike plane $P \subset \R^{2,n}$, the quadratic form $q_{2,n}$ induces a Lorentzian metric $g_P$ on $\mathbb{S}^{n-1} \times \mathbb{S}^1$ given by
\begin{align*}
g_P &= d\sigma^2 (P) - d\theta^2(P)
\end{align*}
where $d\sigma^2(P)$ is the round metric on $\mathbb{S}^{n-1} \subset (P^\perp,q_{P^\perp})$ induced by $q_{P^\perp}$  and $d\theta^2(P)$ is the round metric on $\mathbb{S}^{1} \subset (P,q_P)$ induced by $q_P$.

An easy computation shows that if $P' \subset \R^{2,n}$ is another timelike plane, the Lorentzian metric $g_{P'}$ is conformally equivalent to $g_P$, i.e. $q_P$ and $g_P'$ are proportionnal by a positive smooth function on $\mathbb{S}^{n-1} \times \mathbb{S}^1$. As a result, Einstein universe is naturally equipped with a conformal class of Lorentzian metrics. Moreover, $Ein_{1,n-1} \simeq \mathbb{S}^{n-1} \times \mathbb{S}^1$ is oriented and time-oriented by the timelike vector field $\partial_\theta$. Hence, Einstein universe is a conformal spacetime. It turns out that the causal structure of Einstein universe is trivial: any point is connected to any other point by a causal curve (see e.g. \cite[Chap. 2, Cor. 2.10]{salveminithesis}). We will see that the causal structure of the universal cover of Einstein universe is more interesting.

\paragraph{Conformal group.} The subgroup $O(2,n) \subset Gl_{n+2}(\R)$ preserving $q_{2,n}$, acts conformally on $Ein_{1,n-1}$. When $n \geq 3$, the conformal group of $Ein_{1,n-1}$ is \emph{exactly} $O(2,n)$. This is a consequence of the following result, which is an extension to Einstein universe, of a classical theorem of Liouville in Euclidean conformal geometry (see e.g. \cite{francesarticle}):

\begin{theorem}\label{Liouville theorem}
Let $n \geq 3$. Any conformal transformation between two open subsets of $Ein_{1,n-1}$ is the restriction of an element of $O(2,n)$.
\end{theorem}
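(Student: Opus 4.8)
The plan is to exploit the one piece of geometric data that survives a conformal change of metric, namely the lightlike geodesics (photons) regarded as unparametrized curves. First I would note that any conformal transformation $f$ between open subsets $U, V \subset Ein_{1,n-1}$ preserves the causal type of tangent vectors, so that, as already observed in the paragraph on conformal spacetimes, it carries lightlike geodesics to lightlike geodesics; hence every photon segment contained in $U$ is mapped to a photon segment contained in $V$. The decisive structural input is the Klein-model description of photons: after passing through the covering $Ein_{1,n-1} \to \mathsf{Ein}_{1,n-1}$, a photon is the projectivization $\mathbb{P}(P)$ of a totally isotropic $2$-plane $P \subset \R^{2,n}$, and since $q_{2,n}$ has Witt index $2$ (for $n \geq 2$), these are exactly the projective lines lying on the Einstein quadric $Q = \{q_{2,n} = 0\}$. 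Thus $f$ can be recast, locally and in the Klein model, as a map of an open subset of $Q$ that sends pieces of projective lines of $Q$ to pieces of projective lines of $Q$.

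The heart of the argument is then a projective-rigidity statement: a locally defined map preserving the incidence structure of the rulings of $Q$ must be the restriction of a projective transformation of $\mathbb{P}(\R^{2,n})$. Here the hypothesis $n \geq 3$ enters decisively, exactly as in the Euclidean theorem of Liouville: it guarantees that through each point of $Q$ passes a cone of photons spanning enough directions for the lines of $Q$ to determine the ambient projective lines, so that a von Staudt--type fundamental theorem of projective geometry, adapted to the quadric, applies. I would proceed in two stages: first show that $f$ preserves the relation ``lying on a common photon'' on a neighborhood, then invoke the line-preserving version of the fundamental theorem to produce a collineation $g$ of $\mathbb{P}(\R^{2,n})$ agreeing with $f$ on an open set; over $\R$ the only field automorphism is the identity, so $g$ is genuinely projective linear, i.e. $g \in \mathrm{PGL}(\R^{2,n})$. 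Since conformal maps are real-analytic, $g$ is uniquely determined and $f$ is the restriction of $g$.

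Finally, because $g$ agrees with $f$ on an open subset of $Q$ and $f$ takes values in $Q$, analyticity forces $g(Q) = Q$; hence $g$ preserves $q_{2,n}$ up to a scalar, so $g$ lies in the projectivization of $O(2,n)$. Lifting from $\mathbb{P}(\R^{2,n})$ to the sphere of rays $\mathbb{S}(\R^{2,n})$ that defines the double cover $Ein_{1,n-1}$ promotes $g$ to an element of $O(2,n)$ restricting to $f$, as required. The main obstacle I anticipate is precisely the projective-rigidity step: one must bridge the gap between the classical fundamental theorem on all of $\mathbb{P}(\R^{2,n})$ and its analogue on the quadric, carefully verifying that the photons through a generic point span enough of the ambient lines (this is where $n \geq 3$ is essential and where the case $n = 2$ genuinely fails, the null directions there decoupling), and that the purely local, open-subset hypothesis on $f$ suffices to reconstruct a global collineation --- a point at which real-analyticity of conformal maps, together with a connectedness argument along chains of photons, will be needed.
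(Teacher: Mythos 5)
The paper does not prove this theorem: it is quoted from the literature with a pointer to \cite{francesarticle}, so your sketch can only be measured against the standard proof found there, which it reproduces in outline --- conformal maps preserve photons; photons are exactly the projective lines on the quadric $Q=\{q_{2,n}=0\}$; incidence rigidity forces projectivity; preservation of $Q$ forces membership in $O(2,n)$ up to scale. Your endgame needs two small repairs. First, the statement (and your argument) requires the domain to be connected; otherwise one may choose different group elements on different components --- your ``chains of photons'' remark is precisely the open-and-closed continuation argument needed once $f$ is known to agree with some $g \in O(2,n)$ near each point, and it should replace, not supplement, the appeal to analyticity. Second, real-analyticity of $f$ is neither available a priori nor needed: once a projective $g$ agrees with $f$ on a nonempty open piece of $Q$, Zariski density of that piece in the irreducible quadric already gives $g(Q)=Q$, hence $g^\ast q_{2,n} = \lambda\, q_{2,n}$ with $\lambda > 0$ because the signatures $(2,n)$ and $(n,2)$ differ for $n \geq 3$; rescaling lands in $O(2,n)$, and the lift from $\mathbb{P}(\R^{2,n})$ to the sphere of rays is a choice of sign $\pm A$, fixed consistently on a connected domain.

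The genuine gap is the step you yourself flag as the heart of the matter: ``invoke the line-preserving version of the fundamental theorem.'' Von Staudt's theorem concerns maps defined on open subsets of \emph{projective space} carrying \emph{all} collinear triples to collinear triples. Your $f$ is defined only on the quadric and preserves only the much sparser relation ``lying on a common photon'' --- two generic points of $Q$ lie on no common line of $Q$ at all --- so the classical fundamental theorem does not apply as invoked, and no routine adaptation bridges this. What the literature actually uses is Alexandrov--Zeeman/Chow-type rigidity: globally, Chow's theorem that bijections of a quadric preserving its lines are induced by semilinear maps preserving the form up to scalar; locally, one passes to the affine charts $M(\mathrm{x})$, where photon-preservation becomes preservation of the null lines (equivalently of the lightcones) of conformal Minkowski space, and applies a local Alexandrov theorem, valid for $n \geq 3$ and genuinely false for $n = 2$ where the two null foliations decouple, exactly as you observe. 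With that substitution --- local Alexandrov rigidity in affine charts, followed by the open-closed continuation along the connected domain --- your plan closes up and is in substance the proof of the reference the paper cites.
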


\paragraph{Photons, lightcones and conformal spheres.} Let us characterize some remarkable subsets of $Ein_{1,n-1}$.
\begin{enumerate}
\item \emph{A photon} is the projectivization of a totally isotropic $2$-plane of $\R^{2,n}$ (see e.g. \cite[Chap. 2, Lemme 2.12]{salveminithesis}).
\item \emph{The lightcone} of a point $\mathrm{x} \in Ein_{1,n-1}$, denoted $\mathcal{C}(\mathrm{x})$, is the intersection of $Ein_{1,n-1}$ with the projectivization of the orthogonal of a representant $x \in \R^{2,n}$ of $\mathrm{x}$ in $\R^{2,n}$. Topologically, it is a double pinched torus.
\item \emph{A conformal $(k-1)$-sphere} is a connected component of the intersection of $Ein_{1,n-1}$ with the projectivization of a Lorentzian $(k+1)$-plane of $\R^{2,n}$.
\end{enumerate}

\begin{figure}[h!]
\centering
\includegraphics[scale=1.5]{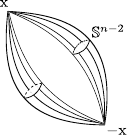}
\caption{Lightcone of $\mathrm{x} \in Ein_{1,2}$.}
\end{figure}

\paragraph{Affine charts.} For every $\mathrm{x} \in Ein_{1,n-1}$, let $M(\mathrm{x})$ denote the intersection of $Ein_{1,n-1}$ with the affine chart $A(\mathrm{x}) := \{<\mathrm{x}, .>_{2,n} < 0\}$ of $\mathbb{S}(\R^{2,n})$:
\begin{align*}
M(\mathrm{x}) &:= \{\mathrm{y} \in Ein_{1,n-1}:\ <\mathrm{x}, \mathrm{y}>_{2,n} < 0\}.
\end{align*}

\begin{definition}
We call \emph{affine chart} of $Ein_{1,n-1}$ any open subset of the form $M(\mathrm{x})$.
\end{definition}

We prove that any affine chart $M(\mathrm{x})$ of $Ein_{1,n-1}$ is naturally a conformal Minkowski spacetime. First, we define an affine structure on $M(\mathrm{x})$. This depends on the choice of a representant $x \in \R^{2,n}$ of $\mathrm{x}$ but it turns out that the class of these affine structures up to homotheties is canonical in the sense that it depends only on $\mathrm{x}$. Indeed, consider the map $f_x: M(\mathrm{x}) \times M(\mathrm{x}) \to x^\perp/\vect(x)$ given by $f_x(\mathrm{y}, \mathrm{z}) = [y - z]$ where $y$ and $z$ are representant of $\mathrm{y}$ and $\mathrm{z}$ respectively such that $<y,x>_{2,n} = <z,x>_{2,n} = -1/2$.

\begin{lemma}\label{lemma: affine chart}
The map $f_x$ defines an affine structure on $M(\mathrm{x})$ of direction $x^\perp/\vect(x)$. Moreover, if $x, x' \in \R^{2,n}$ are two distinct representant of $\mathrm{x}$, there exists $\lambda \in \R^*$ such that $f_{x'} = \lambda f_x$. \qed
\end{lemma}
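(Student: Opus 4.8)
The plan is to verify directly the two axioms defining an affine structure attached to a difference map $M(\mathrm{x}) \times M(\mathrm{x}) \to V$ into a vector space $V$: the Chasles relation $f_x(\mathrm{y},\mathrm{z}) + f_x(\mathrm{z},\mathrm{w}) = f_x(\mathrm{y},\mathrm{w})$, and the simply-transitive property that $\mathrm{z} \mapsto f_x(\mathrm{y},\mathrm{z})$ is a bijection onto $V$ for each fixed $\mathrm{y}$. Then the \emph{moreover} part reduces to a one-line rescaling computation.

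First I would check that $f_x$ is well-defined with values in $x^\perp/\vect(x)$. For $\mathrm{y} \in M(\mathrm{x})$ the defining condition $<\mathrm{x},\mathrm{y}>_{2,n} < 0$ guarantees a unique positive representant $y$ with $<y,x>_{2,n} = -1/2$, so $f_x$ is unambiguous. Since $<y-z,x>_{2,n} = -1/2 - (-1/2) = 0$ and $x$ is isotropic (hence $\vect(x) \subset x^\perp$), the class $[y-z]$ genuinely lies in the $n$-dimensional space $x^\perp/\vect(x)$, which matches $\dim M(\mathrm{x}) = n$. The Chasles relation is then immediate from $[y-z]+[z-w]=[y-w]$, the three normalized representants being shared across the expressions.

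The heart of the argument is the simply-transitive property. Fixing $\mathrm{y}$ and a class $v$ with representative $w \in x^\perp$, I would seek $z$ of the form $z = y - w - t\,x$ and impose isotropy. Expanding gives $<z,z>_{2,n} = <y-w,y-w>_{2,n} - 2t<y-w,x>_{2,n} + t^2<x,x>_{2,n}$; because $x$ is isotropic the quadratic term vanishes, and because $<y-w,x>_{2,n} = -1/2$ the equation collapses to the \emph{linear} one $<z,z>_{2,n} = <y-w,y-w>_{2,n} + t$, yielding the unique solution $t = -<y-w,y-w>_{2,n}$. One then checks that $z \neq 0$ (as $<z,x>_{2,n} = -1/2$), that $[z] \in M(\mathrm{x})$, that $[z]$ is independent of the chosen representative $w$ (replacing $w$ by $w+sx$ shifts $t$ by $-s$ and leaves $z$ unchanged), and that $f_x(\mathrm{y},[z]) = v$. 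Uniqueness follows since two normalized isotropic representants differing by $sx$ satisfy $<z+sx,z+sx>_{2,n} = -s$, forcing $s=0$; again the isotropy of $x$ is exactly what makes this work. This establishes the affine structure of direction $x^\perp/\vect(x)$.

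For the final claim, write $x' = \mu x$ with $\mu > 0$ (representants of a point of the sphere of rays differ by a positive scalar); then $(x')^\perp = x^\perp$ and $\vect(x') = \vect(x)$, so both maps share the same codomain. The normalization $<y',x'>_{2,n} = -1/2$ forces $y' = y/\mu$ for every $\mathrm{y}$, and likewise $z' = z/\mu$, whence $f_{x'}(\mathrm{y},\mathrm{z}) = [y'-z'] = \mu^{-1}[y-z] = \mu^{-1} f_x(\mathrm{y},\mathrm{z})$. This gives the relation with $\lambda = \mu^{-1} \in \R^*$. I expect the only non-formal step to be the bijectivity in the third paragraph, where it is crucial that the isotropy of $x$ degenerates the defining quadratic equation for $t$ into a linear one, producing simultaneously existence and uniqueness of the lift $z$.
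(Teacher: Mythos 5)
Your proof is correct: the normalization $\langle y,x\rangle_{2,n}=-1/2$ is indeed uniquely achievable on each ray, the isotropy of $x$ does exactly the work you highlight (collapsing the quadratic condition $q_{2,n}(z)=0$ for the lift $z=y-w-tx$ to the linear equation $t=-q_{2,n}(y-w)$, and giving uniqueness via $q_{2,n}(z+sx)=-s$), and the rescaling $y'=y/\mu$ for $x'=\mu x$, $\mu>0$, yields $f_{x'}=\mu^{-1}f_x$ with the same codomain since $(x')^\perp/\vect(x')=x^\perp/\vect(x)$. The paper states this lemma without proof (the \qed marks it as routine), and your direct verification of the Chasles relation and simple transitivity is precisely the omitted argument.
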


The orthogonal of $x$ is degenerate. The kernel is the vector line in the direction of~$x$. A supplement of $\vect(x)$ in $x^\perp$ is a subspace such that the restriction of $q_{2,n}$ is of signature $(1,n-1)$. Therefore, $q_{2,n}$ induces a quadratic form of signature $(1,n-1)$ on the quotient $x^\perp/\vect(x)$. Hence, it results from Lemma \ref{lemma: affine chart} the following statement.

\begin{proposition}\label{prop: affine chart}
The affine chart $M(\mathrm{x})$ is a conformal Minkowski spacetime. \qed
\end{proposition}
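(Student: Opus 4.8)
The plan is to promote the affine structure furnished by Lemma~\ref{lemma: affine chart} to a genuine flat Lorentzian metric and then check that the resulting metric belongs to the conformal class that $M(\mathrm{x})$ inherits from $Ein_{1,n-1}$. Fix a representant $x \in \R^{2,n}$ of $\mathrm{x}$. By Lemma~\ref{lemma: affine chart}, the map $f_x$ makes $M(\mathrm{x})$ into an affine space directed by $V := x^\perp/\vect(x)$. As noted just above the statement, the radical of $q_{2,n}|_{x^\perp}$ is exactly $\vect(x)$, so $q_{2,n}$ descends to a nondegenerate quadratic form $\bar{q}_x$ of signature $(1,n-1)$ on $V$. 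Transporting $\bar{q}_x$ through the affine structure $f_x$ yields a translation-invariant, hence flat, Lorentzian metric $g_x$ on $M(\mathrm{x})$, so that $(M(\mathrm{x}), g_x)$ is an affine copy of Minkowski space $\R^{1,n-1}$.

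The step that requires an actual verification is that $g_x$ sits in the conformal class induced by $Ein_{1,n-1}$; \emph{a priori} $g_x$ could be an unrelated flat metric. I would make this explicit by realizing $M(\mathrm{x})$ through a global isotropic section. Pick a null vector $\bar{x}$ with $\langle \bar{x}, x\rangle_{2,n} = -\tfrac{1}{2}$ and set $W := \{x,\bar{x}\}^\perp$, a Lorentzian $n$-plane that is a supplement of $\vect(x)$ in $x^\perp$, so that $W \cong V$ and $\bar{q}_x$ identifies with $q_{2,n}|_W$. The section defining $f_x$, namely $\{y : q_{2,n}(y) = 0,\ \langle y, x\rangle_{2,n} = -\tfrac{1}{2}\}$, is then the graph
\[ \phi(w) = \bar{x} + w + \bar{q}_x(w)\, x, \qquad w \in W. \]
A direct computation of $\phi^\ast q_{2,n}$ shows that the $x$-contributions drop out, because $x$ is isotropic and orthogonal to $W$, leaving precisely $\bar{q}_x$. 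Hence restricting $q_{2,n}$ to this section reproduces $g_x$. Since the conformal structure of $Ein_{1,n-1}$ is the one carried by the projectivized isotropic cone, obtained by restricting $q_{2,n}$ to any local section of the ray projection (two sections differing by a positive function rescale the induced metric conformally, as in the computation behind the Lemma of Section~\ref{sec: Einstein universe}), every representative $g_P$ restricts on $M(\mathrm{x})$ to a metric conformal to $g_x$. Thus $g_x$ represents the conformal class inherited from $Ein_{1,n-1}$.

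Finally, the construction depends on the choice of representant $x$. For another representant $x' = c\, x$ with $c > 0$, the space $V$ and the induced form $\bar{q}_x$ are unchanged, whereas Lemma~\ref{lemma: affine chart} gives $f_{x'} = \lambda f_x$; hence $g_{x'} = \lambda^2 g_x$ is a positive multiple of $g_x$. The flat Lorentzian metric is therefore canonically defined only up to a positive conformal factor, which is exactly the assertion that $M(\mathrm{x})$ is a conformal Minkowski spacetime. The only step demanding genuine computation is the pullback identity $\phi^\ast q_{2,n} = \bar{q}_x$; everything else is bookkeeping with the signature and with the scaling $\lambda$ provided by Lemma~\ref{lemma: affine chart}.
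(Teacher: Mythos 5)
Your proposal is correct and takes essentially the same route as the paper, which states the proposition as an immediate consequence of Lemma~\ref{lemma: affine chart} together with the observation that $q_{2,n}$ induces a signature-$(1,n-1)$ form on $x^\perp/\vect(x)$, offering no further written proof. Your explicit check via the isotropic section $\phi(w) = \bar{x} + w + \bar{q}_x(w)\,x$ that the resulting flat metric actually lies in the conformal class inherited from $Ein_{1,n-1}$ (the pullback computation works, since $q_{2,n}(x)=0$ and $W \subset x^\perp$ kill the cross terms) fills in precisely the step the paper leaves implicit, and your treatment of the dependence on the representant matches the scaling statement of Lemma~\ref{lemma: affine chart}.
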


The boundary of $M(\mathrm{x})$ in $Ein_{1,n-1}$ is the lightcone of $\mathrm{x}$. Notice that the complement of the lightcone $\mathcal{C}(\mathrm{x})$ in $Ein_{1,n-1}$ is the disjoint union of the affine charts $M(\mathrm{x})$ and $M(-\mathrm{x})$.\\

\begin{lemma}\label{lemma: euclidean hyperplanes in an affine chart}
The intersection of the affine chart $M(\mathrm{x})$ with a conformal $(k - 1)$-sphere of $Ein_{1,n-1}$ going through $\mathrm{x}$ is a spacelike $(k - 1)$-plane of $M(\mathrm{x})$.
\end{lemma}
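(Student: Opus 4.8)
The plan is to make the affine identification of $M(\mathrm{x})$ completely explicit and then read off the intersection with the conformal sphere by pure linear algebra. First I would fix a representative $x \in \R^{2,n}$ of $\mathrm{x}$ and realize $M(\mathrm{x})$ as the affine slice $\{y \in \R^{2,n} : q_{2,n}(y) = 0,\ \langle x, y\rangle_{2,n} = -1/2\}$ of the isotropic cone, each ray in the affine chart having a unique such representative. Choosing an isotropic vector $\xi$ with $\langle x, \xi\rangle_{2,n} = 1$ and setting $V = \{x,\xi\}^{\perp}$, which carries signature $(1,n-1)$, every such $y$ writes uniquely as $y = q_{V}(w)\,x - \tfrac12 \xi + w$ with $w \in V$; this recovers the identification of $M(\mathrm{x})$ with the Minkowski space $V \cong x^\perp/\vect(x)$ provided by Lemma \ref{lemma: affine chart}.

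Next I would turn to the conformal sphere. I write $\Sigma$ as a connected component of $Ein_{1,n-1} \cap \mathbb{P}(W)$ for a Lorentzian $(k+1)$-plane $W$, i.e.\ of signature $(1,k)$, with $x \in W$ since $\Sigma$ passes through $\mathrm{x}$. As $x$ is isotropic in the nondegenerate space $W$, I can pick an isotropic $\eta \in W$ with $\langle x,\eta\rangle_{2,n} = 1$; the orthogonal complement $V_W$ of $\vect(x)\oplus\vect(\eta)$ inside $W$ then has signature $(0,k-1)$, hence is positive definite of dimension $k-1$. Solving $q_{2,n}(y) = 0$ and $\langle x,y\rangle_{2,n} = -1/2$ for $y \in W$ exactly as above yields $y = q_{V_W}(w)\,x - \tfrac12\eta + w$ with $w \in V_W$; this set is connected and, letting $q_{V_W}(w) \to \infty$, its rays accumulate on $\mathrm{x} = [x]$, so it is precisely $M(\mathrm{x}) \cap \Sigma$ and not the slice coming from the other component.

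Finally I would pass to affine coordinates with base point $\mathrm{o} = [-\tfrac12\eta] \in M(\mathrm{x}) \cap \Sigma$: for $y$ as above one has $\langle x,\, y - o\rangle_{2,n} = 0$ and $[y - o] = [w]$ in $x^\perp/\vect(x)$, so $M(\mathrm{x}) \cap \Sigma$ is the affine subspace through $\mathrm{o}$ whose direction is the image of $V_W$ in $x^\perp/\vect(x)$. Since $V_W \subset x^\perp$ meets $\vect(x)$ trivially, containing no isotropic vector, this image is a $(k-1)$-dimensional subspace on which the induced quadratic form restricts to $q_{V_W}$, hence is positive definite; therefore the plane is spacelike of dimension $k-1$, which is the claim. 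I expect the only real care to be needed in the signature bookkeeping — reading ``Lorentzian $(k+1)$-plane'' as signature $(1,k)$ and checking that the complement $V_W$ comes out positive definite and injects isometrically into the quotient $x^\perp/\vect(x)$ — and in confirming that the connected slice just produced is the component of the conformal sphere through $\mathrm{x}$ rather than its antipode.
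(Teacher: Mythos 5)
Your proof is correct and follows essentially the same route as the paper's: the paper also identifies $\Sigma \cap M(\mathrm{x})$ as an affine subspace via the map $f_x$ of Lemma \ref{lemma: affine chart}, with direction the image of the orthogonal of $x$ inside the Lorentzian $(k+1)$-plane modulo $\vect(x)$, on which $q_{2,n}$ induces a positive definite form. Your explicit normalization $\langle x,y\rangle_{2,n} = -1/2$ and the parametrization $y = q_{V_W}(w)x - \tfrac12\eta + w$ simply carry out in coordinates what the paper dismisses as ``easy to check,'' including the connected-component verification it leaves implicit.
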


\begin{proof}
Let $\mathrm{S}$ be a conformal $(k - 1)$-sphere going through $\mathrm{x}$. It is the intersection of $Ein_{1,n-1}$ with the projectivization of a Lorentzian $(k + 1)$-plane $P$ of $\R^{2,n}$ containing $\vect(x)$ where $x \in \R^{2,n}$ is a representant of $\mathrm{x}$. It is easy to check that the restriction of $f_x$ to $\mathrm{S} \cap M(\mathrm{x})$ defines an affine structure with direction $x^{\perp_P} / \vect(x)$ where $x^{\perp_P}$ denotes the orthogonal of $x$ in $P$. Since $P$ is Lorentzian, the restriction  of $q_{2,n}$ to $P$ induces a positive definite quadratic form on $x^{\perp_P} / \vect(x)$. The lemma follows.
\end{proof}

\paragraph{Penrose boundary.} Let $M(\mathrm{x})$ be an affine chart of $Ein_{1,n-1}$. 

\begin{definition} 
The regular part of the lightcone of $\mathrm{x}$ is called \emph{the Penrose boundary of the affine chart $M(\mathrm{x})$} and is denoted $\mathcal{J}(\mathrm{x})$.
\end{definition}

The Penrose boundary of $M(\mathrm{x})$ is the union of two connected components $\mathcal{J}^+(\mathrm{x})$ and $\mathcal{J}^-(\mathrm{x})$ where
\begin{itemize}
\item $\mathcal{J}^+(\mathrm{x})$ fibers trivially over the sphere $\mathrm{S}^+(\mathrm{x})$ of future lightlike directions at $\mathrm{x}$;
\item $\mathcal{J}^-(\mathrm{x})$ fibers trivially over the sphere $\mathrm{S}^-(\mathrm{x})$ of past lightlike directions at $\mathrm{x}$;
\end{itemize}
The fiber over a direction $[v] \in \mathrm{S}^\pm(\mathrm{x})$ is the lightlike geodesic contained in $\mathcal{J}^\pm(\mathrm{x})$ tangent to $v$ at $\mathrm{x}$.

\begin{figure}[h!]
\centering
\includegraphics[scale=1.5]{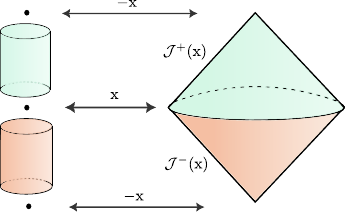}
\caption{Penrose diagramm of an affine chart $M(\mathrm{x})$ of $Ein_{1,2}$: the interior of the diamond represents $M(\mathrm{x})$; the upper and lower cones represent $\mathcal{J}^+(\mathrm{x})$ and $\mathcal{J}^-(\mathrm{x})$ \mbox{respectively;} the equatorial circle is identified to $\mathrm{x}$ and the vertices are identifies to~$-\mathrm{x}$.}
\end{figure}

\paragraph{Penrose boundary and degenerate affine hyperplanes of an affine chart.} Now, we prove that each connected component of $\mathcal{J}(\mathrm{x})$ is in bijection with the space of degenerate affine hyperplanes of $M(\mathrm{x})$. We write it for $\mathcal{J}^+(\mathrm{x})$ but of course it is similar for $\mathcal{J}^-(\mathrm{x})$.

\begin{lemma}
The intersection of the affine chart $M(\mathrm{x})$ with the lightcone of a point $\mathrm{y} \in \mathcal{J}^+(\mathrm{x})$ is a degenerate affine hyperplan of $M(\mathrm{x})$.
\end{lemma}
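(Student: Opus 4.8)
The plan is to work in the fixed-representative linear model underlying the affine structure $f_x$ of Lemma \ref{lemma: affine chart}. Fix a representative $x$ of $\mathrm{x}$ and a representative $y$ of $\mathrm{y}$. Since $\mathrm{y}$ lies on $\mathcal{J}^+(\mathrm{x}) \subset \mathcal{C}(\mathrm{x})$ we have $\langle x, y\rangle_{2,n} = 0$; moreover $q_{2,n}(x) = q_{2,n}(y) = 0$ because both points belong to $Ein_{1,n-1}$, and $y$ is not proportional to $x$ since $\mathrm{y}$ lies in the \emph{regular} part of the lightcone. I would first record the bijection $M(\mathrm{x}) \to x^\perp/\vect(x)$ induced by $f_x$: to $\mathrm{z} \in M(\mathrm{x})$ one associates its normalized representative $z$ with $\langle z, x\rangle_{2,n} = -1/2$, and the coordinate $w := [z - z_0]$ relative to a fixed origin $\mathrm{z}_0$. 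A short computation (solving $q_{2,n}(z_0 + \tilde{w} + tx) = 0$ for $t$, where $\tilde w \in x^\perp$ lifts $w$) shows this is a genuine bijection onto \emph{all} of $x^\perp/\vect(x)$, so $M(\mathrm{x})$ is identified with the full affine space directed by $x^\perp/\vect(x)$.

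Next I would translate the incidence condition. By definition, $M(\mathrm{x}) \cap \mathcal{C}(\mathrm{y})$ is the set of $\mathrm{z} \in M(\mathrm{x})$ whose normalized representative satisfies $\langle z, y\rangle_{2,n} = 0$. Writing $z = z_0 + (z - z_0)$ with $z - z_0 \in x^\perp$, the condition becomes $\bar{y}(w) = -\langle z_0, y\rangle_{2,n}$, where $\bar{y}$ is the functional $[v] \mapsto \langle v, y\rangle_{2,n}$ on $x^\perp/\vect(x)$. The crucial point is that $\bar{y}$ is well defined on the quotient: it vanishes on $\vect(x)$ precisely because $\langle x, y\rangle_{2,n} = 0$. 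Since $y \in x^\perp$ but $y \notin \vect(x)$, its class $[y]$ is a nonzero vector of $x^\perp/\vect(x)$, so $\bar{y} \neq 0$ and the solution set is a genuine, nonempty affine hyperplane of $M(\mathrm{x})$.

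Finally I would identify its causal type. Writing $\bar{q}$ for the Lorentzian form induced by $q_{2,n}$ on $x^\perp/\vect(x)$, one has $\bar{y}(w) = \bar{q}(w, [y])$, so the direction of the hyperplane is $[y]^{\perp_{\bar q}}$, i.e. $[y]$ is a $\bar{q}$-normal vector. Because $q_{2,n}(y) = 0$ we get $\bar{q}([y], [y]) = 0$, so the normal $[y]$ is lightlike; it therefore lies in $[y]^{\perp_{\bar q}}$ and spans the kernel of the restriction of $\bar{q}$ to the direction hyperplane, which is consequently degenerate. This is exactly the assertion that $M(\mathrm{x}) \cap \mathcal{C}(\mathrm{y})$ is a degenerate affine hyperplane of $M(\mathrm{x})$.

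The computation is elementary linear algebra; the step I would handle most carefully — the only genuine subtlety — is the passage to the quotient $x^\perp/\vect(x)$: one must check simultaneously that the functional $\langle \cdot, y\rangle_{2,n}$ descends (which is where the hypothesis $\langle x, y\rangle_{2,n} = 0$, i.e. $\mathrm{y} \in \mathcal{C}(\mathrm{x})$, enters) and that the class $[y]$ is nonzero, so that one really obtains a hyperplane rather than the empty set or the whole chart. Once this is secured, the degeneracy follows immediately from the isotropy of $y$.
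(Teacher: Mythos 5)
Your proof is correct and takes essentially the same route as the paper's: both fix representatives, use $\langle x,y\rangle_{2,n}=0$ together with the transversality of $\vect(y)$ to $\vect(x)$ (coming from $\mathrm{y}$ lying in the regular part of the lightcone) to obtain a nonzero isotropic class $[y]$ in $x^\perp/\vect(x)$, and then identify $M(\mathrm{x})\cap\mathcal{C}(\mathrm{y})$ as an affine hyperplane whose direction is the orthogonal of $[y]$, degenerate because $[y]$ is lightlike. You simply spell out the steps the paper dismisses as ``easy to check,'' namely that $f_x$ identifies $M(\mathrm{x})$ with the full affine space and that the defining functional descends to the quotient and is nonzero, so the solution set is a genuine nonempty hyperplane.
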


\begin{proof}
Let $x, y \in \R^{2,n}$ be two representants of $\mathrm{x}$ and $\mathrm{y}$ respectively. On the one hand, $<y,x>_{2,n} = 0$, i.e. $\vect(y) \subset x^\perp$. On the other hand, since $\mathrm{y} \not \in \{\mathrm{x}, -\mathrm{x}\}$, the lightlike line $\vect(y)$ is transverse to $\vect(x)$. Therefore, the projection $[y]$ of $y$ in the quotient $x^\perp/\vect(x)$ is a non-trivial isotropic vector. It is easy to check that the map $f_x$ defined above induces on the intersection of $M(\mathrm{x})$ with the lightcone of $\mathrm{y}$ an affine structure of direction the orthogonal of $[y]$ in $x^\perp/\vect(x)$. The lemma follows.
\end{proof}

\begin{lemma} \label{lemma: Penrose boundary and lightlike directions of Minkowski spacetime}
Every lightlike geodesic of the Penrose boundary $\mathcal{J}^+(\mathrm{x})$ defines a unique lightlike direction of the affine chart $M(\mathrm{x})$ and vice versa.
\end{lemma}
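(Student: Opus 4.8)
# Proof Proposal

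The plan is to establish a bijection between the lightlike geodesics foliating $\mathcal{J}^+(\mathrm{x})$ and the lightlike directions of the affine chart $M(\mathrm{x})$, exploiting the fibration structure of the Penrose boundary recalled above together with the affine description of $M(\mathrm{x})$ coming from Lemma \ref{lemma: affine chart}. Recall that $\mathcal{J}^+(\mathrm{x})$ fibers trivially over the sphere $\mathrm{S}^+(\mathrm{x})$ of future lightlike directions at $\mathrm{x}$, the fiber over $[v]$ being the lightlike geodesic of $\mathcal{J}^+(\mathrm{x})$ tangent to $v$ at $\mathrm{x}$. So the set of lightlike geodesics of $\mathcal{J}^+(\mathrm{x})$ is naturally identified with $\mathrm{S}^+(\mathrm{x})$. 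On the other side, the lightlike directions of the Minkowski spacetime $M(\mathrm{x})$ form a sphere as well (the projectivized future lightcone of a Minkowski vector space is a sphere of dimension $n-2$). The core of the argument is therefore to produce a canonical identification of these two spheres.

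First I would fix a representant $x \in \R^{2,n}$ of $\mathrm{x}$ and work in the quotient $x^\perp/\vect(x)$, which by Proposition \ref{prop: affine chart} carries the Minkowski conformal structure of $M(\mathrm{x})$. A future lightlike direction at $\mathrm{x}$, viewed in $Ein_{1,n-1}$, is tangent to a unique lightlike geodesic issued from $\mathrm{x}$; lifting to $\R^{2,n}$, such a direction corresponds to an isotropic line $\vect(w)$ transverse to $\vect(x)$ with $w \in x^\perp$, i.e. to a non-trivial isotropic class $[w] \in x^\perp/\vect(x)$. This is exactly the construction appearing in the previous lemma, where the lightcone of a point $\mathrm{y} \in \mathcal{J}^+(\mathrm{x})$ was shown to be the degenerate affine hyperplane directed by the orthogonal of $[w]$ in $x^\perp/\vect(x)$. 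Thus the map sending a lightlike geodesic of $\mathcal{J}^+(\mathrm{x})$ (equivalently, a point $[v] \in \mathrm{S}^+(\mathrm{x})$) to the isotropic class $[w] \in x^\perp/\vect(x)$ it determines is the candidate bijection, and I would check that $[w]$ defines precisely a lightlike direction of $M(\mathrm{x})$ since $q_{2,n}$ induces a signature $(1,n-1)$ form on $x^\perp/\vect(x)$.

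To prove this map is a bijection, I would argue in both directions. For injectivity and surjectivity I would use that a lightlike geodesic of $Ein_{1,n-1}$ through $\mathrm{x}$ is the projectivization of a totally isotropic $2$-plane of $\R^{2,n}$ containing $\vect(x)$; such $2$-planes are in bijection with the isotropic lines of $x^\perp/\vect(x)$, which in turn are exactly the lightlike directions of the Minkowski space $M(\mathrm{x})$. The only subtlety is the \emph{future} constraint: I must verify that restricting to $\mathcal{J}^+(\mathrm{x})$ (rather than all of $\mathcal{J}(\mathrm{x})$) selects one of the two rays of each isotropic line, matching the chosen time-orientation given by $\partial_\theta$, so that future lightlike geodesics correspond to future lightlike directions of $M(\mathrm{x})$. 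I expect the main obstacle to be precisely this bookkeeping of time-orientation and the passage between parametrized geodesics and their directions: one must confirm that the trivial fibration of $\mathcal{J}^+(\mathrm{x})$ over $\mathrm{S}^+(\mathrm{x})$ is compatible, under the affine identification $f_x$, with the lightcone-at-infinity structure of the Minkowski chart, so that the correspondence is genuinely one-to-one and respects causal character. Everything else reduces to the linear-algebra identifications already assembled in the preceding lemmas.
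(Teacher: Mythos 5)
Your proposal is correct and follows essentially the same route as the paper: identifying a lightlike geodesic of $\mathcal{J}^+(\mathrm{x})$ with a totally isotropic $2$-plane of $\R^{2,n}$ containing $\vect(x)$, hence with an isotropic class in $x^\perp/\vect(x)$, which is precisely a lightlike direction of the Minkowski chart. Your extra bookkeeping of the future/past ray distinction is a point the paper's (terser) proof passes over silently, but it is the same argument.
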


\begin{proof}
Let $x \in \R^{2,n}$ be a representant of $\mathrm{x}$. Recall that the vector space associated to the affine chart $M(\mathrm{x})$ is  $x^\perp/\vect(x)$. A lightlike geodesic of $\mathcal{J}^+(\mathrm{x})$ is the intersection of $Ein_{1,n-1}$ with the projectivization of a totally isotropic $2$-plane containing $x$. Therefore, a lightlike geodesic of $\mathcal{J}^+(\mathrm{x})$ is equivalent to the data of an isotropic vector in $x^\perp$ transverse to $\vect(x)$, in other words the data of an isotropic vector of $x^\perp/\vect(x)$.
\end{proof}

\begin{lemma}\label{lemma: section of Penrose boundary}
The intersection of the Penrose boundary $\mathcal{J}^+(\mathrm{x})$ with the lightcone of a point of the affine chart $M(\mathrm{x})$ is a section of the trivial fiber bundle $\mathcal{J}^+(\mathrm{x}) \to \mathrm{S}^+(\mathrm{x})$.
\end{lemma}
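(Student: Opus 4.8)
The plan is to pass to representants in $\R^{2,n}$ and reduce the statement to elementary linear algebra on the totally isotropic $2$-planes containing $\vect(x)$. Fix a representant $x \in \R^{2,n}$ of $\mathrm{x}$, and let $p = [z]$ be the chosen point of $M(\mathrm{x})$. Since $p \in M(\mathrm{x})$ forces $\langle z, x\rangle_{2,n} < 0$, I may normalize the representant $z$ so that $\langle z, x\rangle_{2,n} = -1/2$, in keeping with the normalization used to define $f_x$. The lightcone of $p$ is then $\mathcal{C}(p) = \{[y] \in Ein_{1,n-1}:\ \langle z, y\rangle_{2,n} = 0\}$, so that a point of $\mathcal{J}^+(\mathrm{x})$ lies on $\mathcal{C}(p)$ if and only if its representant is orthogonal to $z$.

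Next I would describe the fibers of $\mathcal{J}^+(\mathrm{x}) \to \mathrm{S}^+(\mathrm{x})$ algebraically. By Lemma \ref{lemma: Penrose boundary and lightlike directions of Minkowski spacetime} together with the description of the Penrose boundary, the fiber over a direction $[v] \in \mathrm{S}^+(\mathrm{x})$ is the future arc of the photon $\mathbb{S}(\Pi_v)$, where $\Pi_v = \vect(x) \oplus \vect(\tilde v)$ is the totally isotropic $2$-plane determined by a lift $\tilde v \in x^\perp$ of $v$. This photon is a circle in the sphere of rays $\mathbb{S}(\Pi_v)$ passing through the two singular points $\mathrm{x} = [x]$ and $-\mathrm{x} = [-x]$, which cut it into its future arc (contained in $\mathcal{J}^+(\mathrm{x})$) and its past arc (contained in $\mathcal{J}^-(\mathrm{x})$). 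On $\Pi_v$ the linear form $\langle z, \cdot\rangle_{2,n}$ is nonzero, since $\langle z, x\rangle_{2,n} = -1/2 \neq 0$; hence its kernel is a single line, which meets $\mathbb{S}(\Pi_v)$ in exactly one antipodal pair of rays. Solving $\langle z,\, a x + b\tilde v\rangle_{2,n} = 0$ yields $a = 2b\,\langle z, \tilde v\rangle_{2,n}$, so this pair is $\{[w], [-w]\}$ with $w = 2\langle z,\tilde v\rangle_{2,n}\, x + \tilde v$. A direct check shows that $w$ is isotropic, orthogonal to $x$, and transverse to $\vect(x)$, so $[w]$ is a regular point of the lightcone of $\mathrm{x}$ lying on the photon, that is, a point of $\mathcal{J}(\mathrm{x})$.

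The heart of the argument is then to see that exactly one of the two rays $[w], [-w]$ lies in $\mathcal{J}^+(\mathrm{x})$. Because $\tilde v$ is transverse to $x$, the four rays $[x], [w], [-x], [-w]$ are pairwise distinct and occur in alternating cyclic order on the circle $\mathbb{S}(\Pi_v)$; consequently $[w]$ and $[-w]$ lie in the two different open arcs cut out by $\{\mathrm{x}, -\mathrm{x}\}$, one in $\mathcal{J}^+(\mathrm{x})$ and the other in $\mathcal{J}^-(\mathrm{x})$. This shows that $\mathcal{C}(p) \cap \mathcal{J}^+(\mathrm{x})$ meets the fiber over every $[v]$ in exactly one point, so that the bundle projection $\pi: \mathcal{J}^+(\mathrm{x}) \to \mathrm{S}^+(\mathrm{x})$ restricts to a bijection of $\mathcal{C}(p) \cap \mathcal{J}^+(\mathrm{x})$ onto $\mathrm{S}^+(\mathrm{x})$.

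I expect this future/past alternation, and especially packaging it continuously, to be the only delicate point. To upgrade the set-theoretic bijection to a genuine section, I would choose the lift $\tilde v$ locally continuously in $[v]$, so that $w = 2\langle z,\tilde v\rangle_{2,n}\, x + \tilde v$ depends continuously on $[v]$, and then select the ray landing in $\mathcal{J}^+(\mathrm{x})$. Since $\mathcal{J}^+(\mathrm{x})$ and $\mathcal{J}^-(\mathrm{x})$ are disjoint open sets and $\mathrm{S}^+(\mathrm{x}) \cong \mathbb{S}^{n-2}$ is connected for $n \geq 3$, this sign choice is locally constant, and the resulting map $s: \mathrm{S}^+(\mathrm{x}) \to \mathcal{J}^+(\mathrm{x})$ is a continuous section whose image is precisely $\mathcal{C}(p) \cap \mathcal{J}^+(\mathrm{x})$, as required.
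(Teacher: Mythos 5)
Your proof is correct, but it takes a genuinely different route from the paper. The paper's proof is global: it observes that the intersection of the two lightcones is (a connected component of) $Ein_{1,n-1} \cap \mathbb{P}(\vect(x,x_0)^\perp)$, notes that $\langle x, x_0\rangle_{2,n} < 0$ forces $\vect(x,x_0)$ to have signature $(1,1)$, hence $\vect(x,x_0)^\perp$ has signature $(1,n-1)$, and concludes that the intersection is a conformal $(n-2)$-sphere meeting every lightlike geodesic of $\mathcal{J}^+(\mathrm{x})$. You instead work fiber by fiber: you identify the fiber over $[v]$ with the future arc of the photon $\mathbb{S}(\Pi_v)$, $\Pi_v = \vect(x)\oplus\vect(\tilde v)$, compute explicitly that the kernel of $\langle z,\cdot\rangle_{2,n}$ on $\Pi_v$ meets the circle of rays in the antipodal pair $\{[w],[-w]\}$ with $w = 2\langle z,\tilde v\rangle_{2,n}\,x + \tilde v$, and use the interleaving of the two antipodal pairs $\{[x],[-x]\}$ and $\{[w],[-w]\}$ to get exactly one intersection point in $\mathcal{J}^+(\mathrm{x})$, then patch the choice continuously. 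Your version has the merit of making explicit the two steps the paper leaves implicit — that each fiber is met \emph{exactly} once (not merely met), and why the intersection point selected lands in the future rather than the past component — at the cost of the local-lift-and-sign bookkeeping at the end. The paper's signature computation is shorter and buys extra information your argument does not directly give: the section is itself a conformal $(n-2)$-sphere, a fact reused later (e.g.\ in the discussion of Misner domains and of regular domains defined by conformal spheres). If you wanted, you could shortcut your final continuity paragraph by noting that your fiberwise argument exhibits the restriction of the bundle projection as a continuous bijection from the compact set $\mathcal{C}(p)\cap\mathcal{J}^+(\mathrm{x})$ onto the Hausdorff space $\mathrm{S}^+(\mathrm{x})$, hence a homeomorphism, whose inverse is the desired section.
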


\begin{proof}
Let $\mathrm{x}_0 \in M(\mathrm{x})$. Let $x, x_0 \in \R^{2,n}$ be two representant of $\mathrm{x}$ and $\mathrm{x}_0$ respectively. The intersection of $\mathcal{J}^+(\mathrm{x})$ with the lightcone of $\mathrm{x}_0$ is a connected component of the intersection of $Ein_{1,n-1}$ with the projectivization of $x^\perp \cap x_0^\perp$. Notice that $x^\perp \cap x_0^\perp = \vect(x,x_0)^\perp$. Since $<x,x_0>_{2,n} < 0$, the subspace $\vect(x,x_0)$ is of type $(1,1)$. Then, $\vect(v,v_0)^\perp$ is of type $(1,n-1)$. It follows that the intersection of $\mathcal{J}^+(\mathrm{x})$ with the lightcone of $\mathrm{x}_0$ is a conformal $(n-2)$-sphere that meets every lightlike geodesic of $\mathcal{J}^+(\mathrm{x})$. The lemma follows.
\end{proof}

Let $f$ be the map which associates to every point $\mathrm{y} \in \mathcal{J}^+(\mathrm{x})$ the intersection of the lightcone of $\mathrm{y}$ with the affine chart $M(\mathrm{x})$.

\begin{proposition}\label{prop: Penrose bound. and degenerate hyperplans}
The map $f$ is a natural bijection between $\mathcal{J}^+(\mathrm{x})$ and the space of degenerate affine hyperplans of the affine chart $M(\mathrm{x})$.
\end{proposition}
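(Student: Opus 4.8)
The plan is to exploit the fibered structure present on both sides of $f$ and to reduce the statement to a one-dimensional affine computation along each fiber. The lemma immediately preceding the statement already shows that $f$ is well defined, i.e. that $\mathcal{C}(\mathrm{y}) \cap M(\mathrm{x})$ is a degenerate affine hyperplane of $M(\mathrm{x})$ for every $\mathrm{y} \in \mathcal{J}^+(\mathrm{x})$; it remains to prove bijectivity. The source $\mathcal{J}^+(\mathrm{x})$ is the total space of the trivial bundle $\mathcal{J}^+(\mathrm{x}) \to \mathrm{S}^+(\mathrm{x})$ whose fibers are the lightlike geodesics of $\mathcal{J}^+(\mathrm{x})$. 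On the target side, sending a degenerate affine hyperplane to its radical direction exhibits the space of degenerate affine hyperplanes of $M(\mathrm{x})$ as the total space of a bundle over the space of lightlike lines of $M(\mathrm{x})$, with fiber $\R$ (the family of parallel translates sharing a common degenerate direction). First I would check that $f$ is compatible with these two projections.

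Next I would identify the two base spaces. By Lemma \ref{lemma: Penrose boundary and lightlike directions of Minkowski spacetime}, the lightlike geodesics of $\mathcal{J}^+(\mathrm{x})$ are in canonical bijection with the \emph{future} lightlike directions of $M(\mathrm{x})$; since each lightlike line carries exactly one future direction, this base is also in bijection with the space of lightlike lines, i.e. with the base of the target bundle. Unwinding the definition of $f$, the hyperplane $f(\mathrm{y})$ has for radical direction the image $[y]$ in $x^\perp/\vect(x)$ of a representant $y$ of $\mathrm{y}$, so $f$ does cover this identification of bases. The point requiring care here is the passage between \emph{oriented} (future) lightlike directions on the source and \emph{unoriented} lightlike lines on the target: it is precisely the restriction to $\mathcal{J}^+(\mathrm{x})$, rather than to the whole Penrose boundary, that turns this base map into a bijection rather than a two-to-one map, and this is what will force injectivity of $f$ across distinct fibers.

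It then remains to prove that $f$ is bijective on each fiber, and this is the computational heart of the argument. Fixing a lightlike geodesic, I would write it as $\{[\lambda x + w] : \lambda \in \R\}$, where $x$ is a representant of $\mathrm{x}$ and $w$ is an isotropic vector with $\langle x, w\rangle_{2,n} = 0$ spanning the chosen null direction. With the normalization $\langle z, x\rangle_{2,n} = -1/2$ used in Lemma \ref{lemma: affine chart} for representants of points of $M(\mathrm{x})$, the incidence condition $z \in \mathcal{C}([\lambda x + w])$ reduces to the single affine equation $\langle z, w\rangle_{2,n} = \lambda/2$, which, read through the affine chart, describes exactly the degenerate affine hyperplane of direction $[w]^{\perp}$ in $x^\perp/\vect(x)$ sitting at an affine height that is an affine function of $\lambda$. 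Since this function of $\lambda$ is an affine bijection of $\R$, distinct points of the geodesic give distinct parallel translates and every translate is attained; hence $f$ is bijective on each fiber. Combining the bijection on bases with the fiberwise bijections yields that $f$ is bijective, and naturality is then immediate since $f$ is defined purely from lightcones and therefore commutes with the action of the stabilizer of $\mathrm{x}$ in $O(2,n)$. I expect the main obstacle to be not the fiber computation itself but the careful bookkeeping of the future/past orientation in the identification of the two base spaces, which is exactly what guarantees that hyperplanes coming from distinct fibers have distinct radical directions.
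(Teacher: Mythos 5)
Your proposal is correct, and it reaches bijectivity by a different mechanism than the paper. The paper proves the proposition by explicitly constructing the inverse map: given a degenerate affine hyperplane $P$, it uses Lemma \ref{lemma: Penrose boundary and lightlike directions of Minkowski spacetime} to pass from the direction of $P$ to a unique lightlike geodesic $\varphi \subset \mathcal{J}^+(\mathrm{x})$, then picks a point $\mathrm{x}_0 \in P$ and invokes Lemma \ref{lemma: section of Penrose boundary} — the lightcone of $\mathrm{x}_0$ cuts $\mathcal{J}^+(\mathrm{x})$ in a section of the bundle $\mathcal{J}^+(\mathrm{x}) \to \mathrm{S}^+(\mathrm{x})$ — to single out the point $\mathrm{p} = \varphi \cap \mathcal{C}(\mathrm{x}_0)$, and checks that $P \mapsto \mathrm{p}$ inverts $f$. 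You share with the paper the base identification via Lemma \ref{lemma: Penrose boundary and lightlike directions of Minkowski spacetime} (and your attention to the oriented-versus-unoriented direction issue is exactly the right bookkeeping: it is the restriction to the future component that makes this a bijection, since each totally isotropic $2$-plane through $x$ contributes one arc to $\mathcal{J}^+(\mathrm{x})$ and one to $\mathcal{J}^-(\mathrm{x})$, matching the two orientations of the isotropic line $[w]$ in $x^\perp/\vect(x)$), but you dispense entirely with the section lemma: your fiberwise computation — parametrizing the geodesic as $[\lambda x + w]$ and reducing incidence, under the normalization $\langle z, x\rangle_{2,n} = -1/2$, to the equation $\langle z, w\rangle_{2,n} = \lambda/2$ — is correct and replaces the paper's synthetic inverse construction by a direct verification that $f$ is an affine bijection on each fiber. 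What the paper's route buys is a coordinate-free description of $f^{-1}$ reusing a lemma already needed elsewhere; what your route buys is more information, namely that $f$ is fiberwise affine in the parameter $\lambda$, from which continuity (indeed smoothness) of $f$ and equivariance under the stabilizer of $\mathrm{x}$ in $O(2,n)$ are immediate, whereas the paper leaves the final verification that $g = f^{-1}$ as ``easy to check.''
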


\begin{proof}
We construct the inverse of $L$. Let $P$ be a degenerate affine hyperplane of $M(\mathrm{x})$. It is directed by the orthogonal of a lightlike direction of $M(\mathrm{x})$. By Lemma \ref{lemma: Penrose boundary and lightlike directions of Minkowski spacetime}, to this lightlike direction corresponds a unique lightlike geodesic $\varphi$ of $\mathcal{J}^+(\mathrm{x})$. Let $\mathrm{x}_0 \in P$. By Lemma \ref{lemma: section of Penrose boundary}, the intersection of the lightcone of $\mathrm{x}_0$ with $\mathcal{J}^+(\mathrm{x})$ meets every lightlike geodesic of $\mathcal{J}^+(\mathrm{x})$ in a unique point, in particular it meets $\varphi$ in a unique point $\mathrm{p}$. We call $g$ the map which sends $P$ on $\mathrm{p}$. It is easy to check that $g = f^{-1}$.
\end{proof}

\paragraph{Universal Einstein universe.} \emph{The universal Einstein universe} is the cyclic cover $\mathbb{S}^{n-1} \times \R$ equipped with the conformal class of $d\sigma^2 - dt^2$. Notice that in dimension $n \geq 3$, it is the universal cover of Einstein universe but this is not true in dimension $n = 2$. The timelike vector $\partial_t$ defines a time-orientation on $\eeu$. Thus, $\eeu$ is a conformal spacetime.

\subparagraph{Conjugate points.} Let $\pi: \eeu \to Ein_{1,n-1}$ and $\bar{\pi}: \eeu \to \mathsf{Ein}_{1,n-1}$ be the cyclic covering maps. We denote by $\delta$ (resp. $\sigma$) : $\eeu \to \eeu$ the generator of the Galois group of $\pi$ (resp. $\bar{\pi}$) defined by $\delta(x,t) = (x, t + 2\pi)$ (resp. $\sigma(x,t) = (-x, t + \pi)$).

\begin{definition}
Two points $p$ and $q$ of $\eeu$ are said \emph{conjugate} if $q = \sigma(p)$.
\end{definition}

\begin{remark}
If $p, q \in \eeu$ are conjugate, then $\pi(p) = - \pi(q)$.
\end{remark}

Unlike Einstein universe, the universal Einstein universe has a rich causal structure. We give a brief description of its causal structure below. We direct to \cite[Chap. 2]{salveminithesis} and \cite[Sec. 2]{Salvemini2013Maximal} for the proofs.

\subparagraph{Causal curves.} Causal curves of $\eeu$ are, up to reparametrization, the curves $(x(t), t)$ where $x: I \subset \R \to \mathbb{S}^{n-1}$ is a $1$-Lipschitz curve on the sphere defined on an interval $I$ of $\R$. In particulat, lightlike geodesics are the causal curves for which $x: I \to \mathbb{S}^{n-1}$ is a geodesic of the sphere (see e.g. \cite[Lemma 5]{Salvemini2013Maximal}). Inextensible causal curves are those for which $I = \R$ in the previous parametrization. It is then easy to see that the inextensible lightlike geodesics of $\eeu$ going through a point $(x_0, t_0)$ have common intersections at the points $\sigma^k(x_0,t_0)$, for $k \in \mathbb{Z}$; and are pairwise disjoint outside these points. This description of inextensible causal curves shows that any sphere $\mathbb{S}^{n-1} \times \{t\}$, where $t \in \R$, is a Cauchy hypersurface. Hence, $\eeu$ is globally hyperbolic.

\subparagraph{Lightcone, future and past.}  
\begin{itemize}
\item The lightcone of a point $(x_0, t_0)$ is the set of points $(x,t)$ such that $d(x,x_0) = |t - t_0|$ where $d$ is the distance on the sphere $\mathbb{S}^{n-1}$ induced by the round metric.
\item The chronological future of $(x_0, t_0)$: this is the set of points $(x,t)$ of $\mathbb{S}^{n-1} \times \R$ such that $d(x,x_0) < t - t_0$.
\item The chronological past of $(x_0, t_0)$: this is the set of points $(x,t)$ of $\mathbb{S}^{n-1} \times \R$ such that $d(x,x_0) < t_0 - t$.
\end{itemize}

\paragraph{Achronal sets.} Every achronal subset of $\eeu$ is the graph of a $1$-Lipschitz real-valued function $f$ defined on a subset of $\mathbb{S}^{n-1}$. Achronal embedded topological hypersurfaces of $\eeu$ are exactly the graphs of $1$-Lipschitz real-valued functions defined on $\mathbb{S}^{n-1}$.

Although there is no achronal subsets in $Ein_{1,n-1}$, we can keep track of the notion of achronality in $Ein_{1,n-1}$.

\begin{proposition} \label{prop: achronality in Ein}
Two distinct points $\mathrm{x}$ and $\mathrm{y}$ of $Ein_{1,n-1}$ can be lifted to points $p$ and $q$ of $\eeu$ which are not extremities of a causal curve if and only if the sign of $<\mathrm{x}, \mathrm{y}>_{2,n}$ is negative.
\end{proposition}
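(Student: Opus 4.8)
The plan is to turn the statement into a single explicit computation by working in a spatio-temporal decomposition. Fix a timelike $2$-plane $P \subset \R^{2,n}$ together with a $q_P$-orthonormal basis $(e_1,e_2)$ of $P$ (so $<e_1,e_1>_{2,n} = <e_2,e_2>_{2,n} = -1$ and $<e_1,e_2>_{2,n} = 0$), and use the resulting diffeomorphism $\mathbb{S}^{n-1} \times \mathbb{S}^1 \simeq Ein_{1,n-1}$, lifted to $\mathbb{S}^{n-1} \times \R = \eeu$. Under this identification a lift $(a,s) \in \eeu$ corresponds to the isotropic ray spanned by the representative $x = a + \cos(s)\,e_1 + \sin(s)\,e_2$, where $a \in \mathbb{S}^{n-1} \subset P^\perp$ satisfies $q_{2,n}(a) = 1$. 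First I would write down such representatives $x$ (for $\mathrm{x}$, with lift $(a,s)$) and $y$ (for $\mathrm{y}$, with lift $(b,u)$), and note that since these are rays, the sign of $<\mathrm{x},\mathrm{y}>_{2,n}$ is well defined independently of the choice of positive representatives.

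The core step is the computation of the inner product. Splitting along the orthogonal decomposition $\R^{2,n} = P^\perp \oplus P$ gives
\begin{align*}
<x,y>_{2,n} &= <a,b>_{2,n} + <\cos(s)e_1 + \sin(s)e_2,\ \cos(u)e_1 + \sin(u)e_2>_{2,n}.
\end{align*}
On $P^\perp$ the form $q_{P^\perp}$ is positive definite and $a,b$ are unit vectors, so $<a,b>_{2,n} = \cos d(a,b)$, where $d$ is the round distance on $\mathbb{S}^{n-1}$ (the angle between $a$ and $b$), which is exactly the distance appearing in the description of the lightcone of $\eeu$. On $P$ a direct expansion yields $-\cos(s-u)$. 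Hence, for these representatives,
\begin{align*}
<\mathrm{x},\mathrm{y}>_{2,n} &= \cos d(a,b) - \cos(s-u),
\end{align*}
which determines the sign class of $<\mathrm{x},\mathrm{y}>_{2,n}$.

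Next I would translate the causal condition. From the description of causal curves of $\eeu$, two points $(a,s')$ and $(b,u')$ are extremities of a causal curve if and only if $d(a,b) \le |s'-u'|$; thus there exist lifts of $\mathrm{x}$ and $\mathrm{y}$ that are \emph{not} extremities of a causal curve if and only if $d(a,b) > |s'-u'|$ for some choice of lifts. Since the fibers of $\pi$ add arbitrary multiples of $2\pi$ to $s$ and to $u$ independently, the quantity $|s'-u'|$ ranges over $\{|s-u+2\pi k| : k \in \mathbb{Z}\}$, whose minimum is the distance $\rho$ from $s-u$ to $2\pi\mathbb{Z}$, a number in $[0,\pi]$. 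Therefore such spacelike lifts exist if and only if $d(a,b) > \rho$. Because $d(a,b)$ and $\rho$ both lie in $[0,\pi]$, where $\cos$ is a strictly decreasing bijection, this is equivalent to $\cos d(a,b) < \cos\rho$; and since $\cos$ is even and $2\pi$-periodic, $\cos\rho = \cos(s-u)$. Combining with the previous display, this is exactly the condition $<\mathrm{x},\mathrm{y}>_{2,n} < 0$.

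The main obstacle, and really the only subtle point, is the quantifier hidden in ``\emph{can} be lifted'': one must optimize over the $2\pi\mathbb{Z}$ ambiguity in the lifts rather than compute with a single fixed pair, which is why the minimal offset $\rho$ enters. It is also worth checking the boundary cases to confirm the equivalence is sharp: when $<\mathrm{x},\mathrm{y}>_{2,n} = 0$ one has $d(a,b) = \rho$, so the closest lifts are joined by a lightlike geodesic and no spacelike pair exists; and when $<\mathrm{x},\mathrm{y}>_{2,n} > 0$ every pair of lifts satisfies $|s'-u'| \ge \rho > d(a,b)$ and is thus chronologically related. In both cases no spacelike lift exists, confirming that spacelike-separated lifts exist precisely when the sign is negative.
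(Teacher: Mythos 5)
Your proof is correct and complete. Note that the paper itself gives no proof of Proposition \ref{prop: achronality in Ein}: it is stated without argument, with the causal-structure facts of $\eeu$ deferred to Rossi's thesis \cite[Chap. 2]{salveminithesis} and \cite[Sec. 2]{Salvemini2013Maximal}, so there is no in-paper argument to compare against; your computation supplies exactly the missing proof, and it is built from the same ingredients the paper sets up (the spatio-temporal splitting $\R^{2,n} = P^\perp \oplus P$ giving $Ein_{1,n-1} \simeq \mathbb{S}^{n-1} \times \mathbb{S}^1$, and the description of causal curves of $\eeu$ as graphs of $1$-Lipschitz maps). The three points that make the equivalence work are all handled: the sign of $<\mathrm{x},\mathrm{y}>_{2,n}$ is well defined because points of $Ein_{1,n-1}$ are rays, not lines; the identity $<x,y>_{2,n} = \cos d(a,b) - \cos(s-u)$ follows correctly from positive definiteness on $P^\perp$ and negative definiteness on $P$; and, crucially, you quantify over all lifts by minimizing $|s'-u'|$ over the deck group, reducing to $\rho = \mathrm{dist}(s-u, 2\pi\mathbb{Z}) \in [0,\pi]$, where strict monotonicity of $\cos$ on $[0,\pi]$ turns $d(a,b) > \rho$ into $<\mathrm{x},\mathrm{y}>_{2,n} < 0$. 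One hair-splitting remark on your boundary-case discussion: when $<\mathrm{x},\mathrm{y}>_{2,n} = 0$ the phrase ``the closest lifts are joined by a lightlike geodesic'' tacitly uses $d(a,b) = \rho > 0$, which holds because $\mathrm{x} \neq \mathrm{y}$ rules out $d(a,b) = \rho = 0$; this is worth one explicit line, though it does not affect the main equivalence chain, which is valid as written.
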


\paragraph{Affine charts of the universal Einstein universe.} For every $p \in \eeu$, let $\Mink_0(p)$ denote the set of points which are not causally related to $p$. Notice that $\Mink_0(p)$ is the interior of the diamond $J(\sigma(p), \sigma^{-1}(p))$. It is easy to see that this last one does not contain conjugate points. Hence, the restriction of the covering map $\pi: \eeu \to Ein_{1,n-1}$ to $\Mink_0(p)$ is injective. Moreover, using Proposition \ref{prop: achronality in Ein}, one can prove that the image of this restriction is exactly the affine chart $M(\mathrm{x})$ where $\mathrm{x} = \pi(p)$. This motivates the following definition.

\begin{definition}
We call \emph{affine chart} of $\eeu$ any open subset of the form $\Mink_0(p)$.
\end{definition}

Besides $\Mink_0(p)$, the point $p$ defines two other affines charts (see Figure \ref{figure: conjugate-points}):
\begin{itemize}
\item the set of points non-causally related to $\sigma(p)$, contained in the chronological future of $p$, denoted $\Mink_+(p)$;
\item the set of points non-causally related to $\sigma^{-1}(p)$, contained in the chronological past of $p$, denoted $\Mink_-(p)$. 
\end{itemize}

\begin{figure}[h!]
\centering
\includegraphics[scale=0.8]{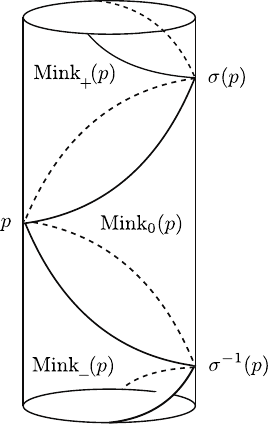}
\caption{Affine charts defined by a point $p \in \eeu$ for $n = 2$.}
\label{figure: conjugate-points}
\end{figure}

The boundary of $\Mink_0(p)$ is the union of $\partial I^+(p)$ and $\partial I^-(p)$. The regular parts of $\partial I^+(p)$ and $\partial I^-(p)$ are called \emph{the future} and \emph{the past Penrose boundary of $\Mink_0(p)$} and are denoted $\mathcal{J}^+(p)$ and $\mathcal{J}^-(p)$. 

\begin{remark}
If $\mathrm{x} = \pi(p)$, the restriction of $\pi$ to $\mathcal{J}^\pm(p)$ is injective and its image is exactly $\mathcal{J}^\pm(\mathrm{x})$.
\end{remark}

Given a point $q \in \mathcal{J}^+(p)$, the intersection of the past lightcone of $q$ with $\Mink_0(p)$ is a degenerate hyperplane $H(q)$. It follows that:
\begin{itemize}
\item the intersection of $I^-(q)$ with $\Mink_0(p)$ is the chronological past of $H(q)$ in $\Mink_0(p)$;
\item the complement of $I^-(p)$ in $\Mink_0(p)$ is the chronological future of $H(q)$ in $\Mink_0(p)$.
\end{itemize}
Notice that $I^+(q)$ is disjoint from $\Mink_0(p)$. Hence, the chronological future of $H(q)$ is exactly the set of points of $\Mink_0(p)$ which are not causally related to $q$.

\paragraph{Regular domains.} Penrose boundary is closely related to the notion of \emph{regular domains}. We adopt here the definition of \cite[Sec. 2, p. 7]{Bonsante2015SpacelikeCS}.

\begin{definition}
\emph{A future-regular domain} is a non-empty convex open domain of Minkowski spacetime obtained as the intersection of strict future half-spaces bounded by a degenerate hyperplane.
\end{definition}

We define similarly past-regular domains by reversing the time-orientation.

Let $\Lambda \subset \mathcal{J}^+(p)$. It defines naturally a convex domain in both affine charts $\Mink_0(p)$ and $\Mink_+(p)$: 
\begin{itemize}
\item the set $\Omega^+(\Lambda)$ of points of $\Mink_0(p)$ non-causally related to any point of $\Lambda$;
\item the set $\Omega^-(\Lambda)$ of points of $\Mink_+(p)$ non-causally related to any point of $\Lambda$.
\end{itemize}
The set $\Omega^+(\Lambda)$ corresponds to the intersection of the strict future half-spaces of $\Mink_0(p)$ bounded by a degenerate hyperplane of $\Lambda$ and so is a \emph{future} convex set. Similarly, $\Omega^-(\Lambda)$ is the intersection of the strict past half-spaces of $\Mink_+(p)$ bounded by a degenerate hyperplane of $\Lambda$ and so is a \emph{past} convex set. Indeed, notice that $\mathcal{J}^+(p) = \mathcal{J}^-(\sigma(p))$ is the past Penrose boundary of the affine chart $\Mink_0(\sigma(p)) = \Mink_+(p)$. Hence, $\Omega^\pm(\Lambda)$ are regular domains if and only if they are non-empty and open. In \cite[Cor. 4.11]{barbot}, the author shows that $\Omega^+(\Lambda)$ and $\Omega^-(\Lambda)$ are regular if and only if $\Lambda$ is compact. 

\begin{remark}
Set $\mathrm{x} := \pi(p)$. The restriction of $\pi$ to $\Omega^+(\Lambda)$ is injective (since it is contained in the affine chart $\Mink_0(p)$) and, by Proposition \ref{prop: achronality in Ein}, its image is the future convex domain of $M(\mathrm{x})$ defined as 
\begin{align*}
\pi(\Omega^+(\Lambda)) &= \{\mathrm{y} \in M(\mathrm{x}):\ <\mathrm{y},\mathrm{y}_0>_{2,n} < 0,\ \forall \mathrm{y}_0 \in \pi(\Lambda)\}.
\end{align*}
Similarly, the restriction of $\pi$ to $\Omega^-(\Lambda)$ is injective and its image is the past convex domain of $M(-\mathrm{x})$ defined as
\begin{align*}
\pi(\Omega^-(\Lambda)) &= \{\mathrm{y} \in M(-\mathrm{x}):\ <\mathrm{y},\mathrm{y}_0>_{2,n} < 0,\ \forall \mathrm{y}_0 \in \pi(\Lambda)\}.
\end{align*}
\end{remark} 

\begin{example}[Misner domains]
Regular domains defined by a conformal $(k - 1)$-sphere of the Penrose boundary are remarkable: they can be described as the chronological future/past of a spacelike $(n - k - 1)$-plane of Minkowski spacetime. These domains appeared naturally in the study of GHCM \emph{flat} spacetimes (see \cite[Sec. 3.2]{barbot}) and are called \emph{Misner domains}\footnote{These spacetimes have been called after the mathematician Charles W. Misner since they can be seen as a generalization of the two-dimensional spacetime described by Misner in \cite{misner1967taub}, namely the quotient by a boost of a half space of $\R^{1,1}$ bounded by a lightlike straight line.}.
\end{example}

\subsection{Conformally flat spacetimes} \label{sec: conformally flat spacetimes}

A spacetime is said \emph{conformally flat} if it is locally conformal to Minkowski spacetime. In dimension $n \geq 3$, by Liouville theorem, a spacetime is conformally flat if and only if it is equipped with $(G,X)$-structure where $X = \eeu$ and \mbox{$G = \Conf(\eeu)$} is its group of conformal transformation.
Therefore, a conformally flat Lorentzian structure on a manifold $M$ of dimension $n \geq 3$ is encoded by the data of a development pair $(D,\rho)$ where $D: \tilde{M} \to \eeu$ is a local diffeomorphism called \emph{developing map} and $\rho: \pi_1(M) \to \Conf(\eeu)$ is the associated \emph{holonomy morphism}~\footnote{We direct the reader not familiar with $(G,X)$-structures to \cite[Chapter 5]{goldman}.}. Let us make some remarks and introduce some vocabulary:
\begin{itemize}
\item In general, a developing map is only a local diffeomorphism, neither injective nor surjective. When $D$ is a global diffeomorphism, we say that the conformally flat Lorentzian structure on $M$ is \emph{complete}. 
\item A conformally flat spacetime $M$ is said \emph{developable} if any developing map descends to the quotient, giving a local diffeomorphism from $M$ to $\eeu$.
\item Two points $p, q$ of a developable conformally flat spacetime $M$ are said to be \emph{conjugate} if their images under a developing map are conjugate in $\eeu$.
\end{itemize} 

\paragraph{Enveloping space of a developable GH conformally flat spacetime.} Let $M$ be a developable GH conformally flat spacetime. In \cite[Section 4.2.]{smai2023enveloping}, we constructed a developable conformally flat spacetime $E(M)$ with the following properties:
\begin{itemize}
\item $E(M)$ fibers trivially over a conformally flat Riemannian manifold $\mathcal{B}$, diffeomorphic to a Cauchy hypersurface of $M$;
\item $M$ embeds conformally in $E(M)$ as a causally convex open subset;
\item all the conformally flat Cauchy-extensions of $M$ embeds conformally in $E(M)$ as causally convex open subsets. In particular, the $\mathcal{C}_0$-maximal extension of $M$ is the Cauchy development of a Cauchy hypersurface of $M$ in $E(M)$.
\end{itemize}
Such a spacetime $E(M)$ is called \emph{an enveloping space of $M$}.

\section{Causal completion of GH spacetimes} \label{sec: causal completion}

This section introduce the notion of \emph{causal boundary of a spacetime}, due to Geroch-Kronheimer-Penrose \cite{Kronheimer}, in the setting of GH spacetimes. Let $M$ denote a GH spacetime.

\subsection{IPs and IFs}

Let $U \subset M$ be an open subset. 

\begin{definition}
We say that $U$ is \emph{a past set} if $I^-(U) \subset U$.
\end{definition} 

The first obvious examples of past sets are chronological pasts of points, and more generally, chronological pasts of causal curves. 

\begin{definition}
We say that $U$ is \emph{an indecomposable past set} (abbrev. IP) is $U$ is a past set which can not be written as the union of two \emph{distinct} past open subsets of $M$. 
\end{definition} 

\begin{lemma} \label{lemma: sufficient condition to be an IP}
Let $U$ be a past open subset of $M$. Suppose that for every $p, q \in U$ the intersection of the chronological futures of $p$ and $q$ in $U$ is non-empty. Then $U$ is an IP.
\end{lemma}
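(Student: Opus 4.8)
The plan is to argue by contradiction, showing that a decomposition of $U$ into two proper open past subsets would force two of its points to have disjoint chronological futures inside $U$, contradicting the hypothesis. The only structural fact I will use is the behaviour of past sets under taking chronological pasts: if $W$ is any open past set and $r \in W$, then $I^-(r) \subset I^-(W) \subset W$, since $I^-(W) \subset W$ is exactly the defining condition of a past set. In particular, a point of an open past set drags its whole chronological past into that set.

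So I would suppose, for contradiction, that $U = A \cup B$ where $A$ and $B$ are open past subsets of $M$ with $A \neq U$ and $B \neq U$. Since $A \subsetneq U = A \cup B$, there is a point $q \in U \setminus A$, and necessarily $q \in B$; symmetrically there is a point $p \in U \setminus B \subset A$. Thus I have produced witnesses $p \in A \setminus B$ and $q \in B \setminus A$.

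Now I apply the hypothesis to the pair $p, q \in U$: the set $I^+(p,U) \cap I^+(q,U)$ is non-empty, so I pick a point $r$ in it. Then $r \in U = A \cup B$, so $r$ lies in $A$ or in $B$. Suppose $r \in A$. Since $r \in I^+(q,U) \subset I^+(q)$, we have $q \in I^-(r)$; and since $A$ is a past set containing $r$, the closure property above gives $I^-(r) \subset A$, whence $q \in A$, contradicting $q \notin A$. The case $r \in B$ is identical after exchanging the roles of $p$ and $q$: there $p \in I^-(r) \subset B$ contradicts $p \notin B$. Either way one reaches a contradiction, so no such decomposition of $U$ exists.

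Hence $U$ cannot be written as the union of two proper open past subsets, i.e. $U$ is indecomposable; being a past open set by assumption, it is an IP. The argument is entirely formal; the only point requiring a little care is to take the common future point inside $U$ (not merely in $M$), so that it is forced to lie in $A$ or $B$, and then to invoke the past-set property for whichever of $A$, $B$ contains it. I do not anticipate any genuine obstacle here.
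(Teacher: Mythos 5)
Your proof is correct and follows essentially the same route as the paper's: both argue by contradiction from a decomposition $U = A \cup B$, pick witnesses $p \in A \setminus B$ and $q \in B \setminus A$, take a common chronological future point inside $U$, and derive a contradiction from the past-set property $I^-(r) \subset A$ (or $\subset B$). Your handling of the witnesses via $A \neq U$, $B \neq U$ is if anything slightly more careful than the paper's, which takes their existence for granted.
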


\begin{proof}
Suppose $U = V \cup W$ where $V$ and $W$ are two distinct past open subsets of~$M$. Let $v \in V \backslash W$ and $w \in W \backslash V$. Let $u \in I^+(v) \cap I^+(w) \cap U$. Then, $v, w  \in I^-(u)$. Suppose $u \in V$. Since $V$ is a past set, $I^-(u) \subset V$; hence $w \in V$. Contradiction. Similarly, if $u \in W$, we obtain $v \in I^-(u) \subset W$. Contradiction.
\end{proof}

It follows immediately from Lemma \ref{lemma: sufficient condition to be an IP} the following statement.

\begin{corollary} \label{cor: examples of IPs}
The chronological past of a causal curve of $M$ is an IP. \qed
\end{corollary}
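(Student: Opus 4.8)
The plan is to show that the chronological past of a causal curve satisfies the hypothesis of Lemma \ref{lemma: sufficient condition to be an IP}, and then invoke that lemma directly. Let $\gamma : I \to M$ be a causal curve, parametrized by an interval $I \subset \R$, and set $U := I^-(\gamma) = \bigcup_{t \in I} I^-(\gamma(t))$. First I would check that $U$ is indeed a past open subset of $M$: it is open because each $I^-(\gamma(t))$ is open (chronological pasts of points are open in any spacetime), and it is a past set because $I^-(U) = \bigcup_t I^-(I^-(\gamma(t))) \subset \bigcup_t I^-(\gamma(t)) = U$, using that $I^-$ is idempotent in the sense $I^-(I^-(p)) \subset I^-(p)$ (if $r \ll q \ll p$ then $r \ll p$ by concatenation of timelike curves, which is where the openness of timelike cones lets one smooth the corner).

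The heart of the argument is verifying the hypothesis of Lemma \ref{lemma: sufficient condition to be an IP}: for every $p, q \in U$, the intersection $I^+(p,U) \cap I^+(q,U)$ is non-empty, where the futures are taken inside the open subset $U$ (which is itself a spacetime). By definition of $U$ there are parameters $s, t \in I$ with $p \in I^-(\gamma(s))$ and $q \in I^-(\gamma(t))$; without loss of generality assume $s \le t$ in the orientation of $I$, so that $\gamma(s)$ causally precedes $\gamma(t)$ along $\gamma$. Then I would argue that $\gamma(t)$ lies in the future of both $p$ and $q$: we have $q \ll \gamma(t)$ directly, and $p \ll \gamma(s) \le \gamma(t)$ gives $p \ll \gamma(t)$ by concatenating the timelike curve from $p$ to $\gamma(s)$ with the causal arc of $\gamma$ from $\gamma(s)$ to $\gamma(t)$ (a timelike-then-causal concatenation is again deformable to a timelike curve).

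The subtle point — which I expect to be the main obstacle — is that the lemma requires the common future point to lie in $U$ and the futures to be computed \emph{within} $U$, whereas the natural candidate $\gamma(t)$ lies on the curve but need not itself belong to $U = I^-(\gamma)$ (a point of a causal curve is not generally in its own chronological past). To repair this, rather than using $\gamma(t)$ itself I would pick a parameter $t' \in I$ strictly to the future of $t$ along $\gamma$ (such $t'$ exists whenever $\gamma$ is not future-endless exactly at $t$; for the extreme case one uses that $U$ is the past of the \emph{whole} curve and chooses the witnessing parameters $s,t$ away from the future endpoint). Then $\gamma(t) \ll \gamma(t')$ would place $\gamma(t) \in I^-(\gamma(t')) \subset U$, and the chronological relations $p \ll \gamma(t)$ and $q \ll \gamma(t)$ can be realized by timelike curves that stay inside $U$ because $U$ is a past set containing $\gamma(t)$ and these timelike curves lie in $I^-(\gamma(t)) \subset U$. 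This yields $\gamma(t) \in I^+(p,U) \cap I^+(q,U) \neq \varnothing$, so Lemma \ref{lemma: sufficient condition to be an IP} applies and $U$ is an IP. The only place demanding care is this matching of the ambient-versus-intrinsic future, and handling the boundary parameter of $I$; everything else is a routine concatenation argument in chronology.
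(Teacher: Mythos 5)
Your overall strategy — verify the hypothesis of Lemma \ref{lemma: sufficient condition to be an IP} and invoke it — is exactly what the paper intends (the paper gives no separate proof; the corollary is declared immediate from that lemma). But your execution has a genuine gap at the repair step. You write: pick $t' > t$ in $I$, ``then $\gamma(t) \ll \gamma(t')$ would place $\gamma(t) \in I^-(\gamma(t')) \subset U$.'' This is false for a general \emph{causal} curve: if $\gamma$ is a lightlike geodesic, then $\gamma(t)$ and $\gamma(t')$ are causally but not chronologically related, so $\gamma(t) \notin I^-(\gamma(t'))$ no matter how $t'$ is chosen. The failure is not confined to boundary parameters: for a lightlike geodesic in Minkowski spacetime (an example the paper itself discusses, where $I^-(\gamma)$ is the chronological past of a degenerate hyperplane), \emph{no} point of $\gamma$ lies in $U = I^-(\gamma)$ at all. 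So your candidate common-future point $\gamma(t)$ can never be pushed into $U$ by moving along the curve, and the argument as written does not close.

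The correct repair uses openness rather than chronology along $\gamma$. Your first steps are fine: with $p \ll \gamma(s)$, $q \ll \gamma(t)$, $s \le t$, push-up ($x \ll y \le z \Rightarrow x \ll z$) gives $p \ll \gamma(t)$ and $q \ll \gamma(t)$, so $V := I^+(p) \cap I^+(q)$ is an \emph{open} set containing $\gamma(t)$. Since $\gamma(t) \in \overline{I^-(\gamma(t))}$ (in any spacetime a point is a limit of points chronologically preceding it), $V$ meets $I^-(\gamma(t)) \subset U$; any point $u$ in this intersection satisfies $p \ll u$, $q \ll u$, $u \in U$. Note this also handles the endpoint case you worried about, with no case distinction. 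Two further remarks: the paper's lemma, as its proof shows (it takes $u \in I^+(v) \cap I^+(w) \cap U$), only requires the \emph{ambient} futures to meet $U$, so your intrinsic-versus-ambient concern is over-engineered; and in any case, once $u \in U$ is found, intrinsic futures come for free, since any timelike curve ending at $u$ lies in $I^-(u) \cup \{u\} \subset U$ because $U$ is a past set.
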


In \cite{Kronheimer}, the authors proved that conversely, any IP is the chronological past of some \emph{timelike} curve of $M$ (see \cite[Theorem 2.1]{Kronheimer}). Their proof is based on the fact that the condition on $U$ given in Lemma \ref{lemma: sufficient condition to be an IP} is not only sufficient but also necessary. 

The IFs are defined similarly for the reverse time-orientation. All the results stated above are true for the reverse time-orientation.

\paragraph{PIPs and TIPs.} Let $P = I^-(\gamma)$ be an IP, where $\gamma$ is a causal curve of $M$. We distinguish two cases:
\begin{enumerate}
\item The curve $\gamma$ admits a future endpoint $p$ in $M$. Then, $P$ equals $I^-(p)$. In this case, $P$ is called \emph{a proper indecomposable past set} (abbrev. PIP).
\item The curve $\gamma$ is inextendible in the future. In this case, $P$ is called \emph{a terminal indecomposable past set} (abbrev. TIP).
\end{enumerate}

Similarly, the IFs split in two classes: the PIFs, namely the chronological futures of points, and the TIFs $I^+(\gamma)$ where $\gamma$ is a causal curve inextendible in the past.

\begin{definition}
We call $\hat{M}$ (resp. $\check{M}$) the set of IPs (resp. IFs).
\end{definition}

\paragraph{Maximal TIPs.} The inclusion defines a partial ordering relation on the set of TIPs. We say that a TIP is \emph{maximal} if it is maximal for this ordering relation.

\begin{proposition}
The set of TIPs admits at least a maximal element.
\end{proposition}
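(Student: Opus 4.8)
The plan is to apply Zorn's lemma to the set $\mathcal{T}$ of TIPs, partially ordered by inclusion. Two things must be checked: that $\mathcal{T}$ is non-empty, and that every non-empty chain in $\mathcal{T}$ admits an upper bound lying again in $\mathcal{T}$. Non-emptiness is immediate: I would pick any point of $M$, take a future-directed timelike curve through it, and extend it to a future-inextendible timelike curve $\gamma$; by Corollary \ref{cor: examples of IPs} the set $I^-(\gamma)$ is an IP, and since $\gamma$ has no future endpoint it is, by the very definition of the two classes, a TIP.

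Next I would let $\{P_\alpha\}_\alpha$ be a non-empty chain of TIPs and set $P := \bigcup_\alpha P_\alpha$. First one checks that $P$ is a past open set: it is open as a union of open sets, and $I^-(P) = \bigcup_\alpha I^-(P_\alpha) \subseteq \bigcup_\alpha P_\alpha = P$ since each $P_\alpha$ is a past set. Then one checks indecomposability via Lemma \ref{lemma: sufficient condition to be an IP}: given $p,q \in P$, the chain being totally ordered there is a single member $P_\alpha$ containing both $p$ and $q$; since $P_\alpha$ is an IP, the condition of Lemma \ref{lemma: sufficient condition to be an IP} — which the authors note is also \emph{necessary} for an IP — provides a point of $I^+(p) \cap I^+(q) \cap P_\alpha \subseteq I^+(p) \cap I^+(q) \cap P$. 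Hence $I^+(p) \cap I^+(q) \cap P \neq \emptyset$ for all $p,q \in P$, and Lemma \ref{lemma: sufficient condition to be an IP} shows that $P$ is an IP.

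It remains to see that $P$ is genuinely a TIP and not a PIP, and this is the crux of the argument. The key sublemma is that \emph{no TIP can be contained in a PIP}. Indeed, suppose $I^-(\gamma) \subseteq I^-(r)$ for some $r \in M$, where $\gamma \colon [0,\infty) \to M$ is a future-inextendible timelike curve. Each point $\gamma(s)$ lies in $I^-(\gamma(t))$ for $t > s$, so $\gamma \subseteq I^-(\gamma) \subseteq I^-(r)$, whence the entire curve is contained in $J^+(\gamma(0)) \cap J^-(r)$. By global hyperbolicity this intersection is compact, so $\gamma$ is a future-inextendible causal curve imprisoned in a compact set — which is impossible in a strongly causal spacetime. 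This contradiction proves the sublemma. Since $P$ contains some member $P_{\alpha_0}$, which is a TIP, if $P$ were a PIP equal to $I^-(r)$ we would have the TIP $P_{\alpha_0} \subseteq I^-(r)$, contradicting the sublemma. Therefore the IP $P$ is a TIP, hence an upper bound of the chain inside $\mathcal{T}$, and Zorn's lemma yields a maximal element.

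The main obstacle is exactly this last step: a priori the union of an increasing chain of TIPs could degenerate into a PIP, and excluding this is where global hyperbolicity is genuinely used — through the compactness of the causal diamonds $J^+(\cdot) \cap J^-(\cdot)$ together with the classical fact that strong causality forbids a future-inextendible causal curve from being imprisoned in a compact set. The verifications that $P$ is past, open, and indecomposable are routine by comparison.
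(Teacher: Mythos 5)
Your proof is correct and takes essentially the same route as the paper: Zorn's lemma applied to the union of a chain of TIPs, checking that the union is a past set, indecomposable, and terminal. The only differences are cosmetic --- you get indecomposability via Lemma~\ref{lemma: sufficient condition to be an IP} together with its noted necessity rather than the paper's direct decomposition argument, and you spell out the imprisonment argument (compactness of $J^+(\gamma(0)) \cap J^-(r)$ by global hyperbolicity, plus the fact that strong causality forbids a future-inextendible causal curve from being trapped in a compact set) that the paper compresses into a bare ``Contradiction''.
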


\begin{proof}
We use Zorn's lemma which states that any partially ordered set containing upper bounds for every chain necessarily contains at least one maximal element.

Let $C$ a chain of TIPs, i.e. a totally ordered subset of TIPs. Let $P$ denote the union of the elements of $C$. We prove that $P$ is a TIP. Clearly, $P$ is a past-set as union of past-sets. Moreover, $P$ is indecomposable. Indeed, if $P$ is the union of two distinct past-sets $Q$ and $R$ then any TIP $P_i$ of $C$ is the union of the $P_i \cap Q$ and $P_i \cap R$. Hence, $P_i = P_i \cap Q$ or $P_i = P_i \cap R$; in other words $P_i \subset Q$ or $P_i \subset R$ for every TIP $P_i$ of $C$. It~follows that $P = Q$ or $P = R$. Lastly, suppose that $P$ is a PIP, i.e. $P = I^-(p)$ with $p \in M$. Let $P_i$ a TIP of $C$. There exists a future-inextensible causal curve $\gamma_i$ such that $P_i = I^-(\gamma_i)$. Since $P_i \subset P$, we have $I^-(\gamma_i(t)) \subset I^-(p)$ for every $t \geq 0$. Hence, $\gamma_i(t) \in J^-(p)$ for every $t \geq 0$. Contradiction.
\end{proof}

\paragraph{Causal completion of $M$.}

Since $M$ is globally hyperbolic, it is in particular \emph{past-} and \emph{future-distinguishing} (see \cite[Remark 3.23]{minguzzi2008causal}), that is the maps $p \in M \mapsto I^\pm(p)$ are injective. Therefore, the set of PIPs (PIFs) identifies with $M$. The TIPs (resp. TIFs) can be seen as the future (resp. past) endpoints at infinity of inextendible causal curves of $M$ called \emph{ideal points} in \cite{Kronheimer}. The future ideal points form \emph{the future causal boundary} of $M$ while the past ideal points form the \emph{past causal boundary} of $M$. The disjoint union of $M$ and its future and past causal boundaries is called \emph{the causal completion} of $M$, denoted by $M^\sharp$. In other words, $M^\sharp$ is the quotient of the disjoint union of the set $\hat{M}$ of IPs and the set $\check{M}$ of IFs by the equivalence relation $I^+(p) \sim I^-(p)$.

\begin{example}[Universal Einstein universe]
The description of inextendible causal curves of the universal Einstein universe shows that any TIP and any TIF is equal to the whole space. In other words, the future and the past causal boundary of the universal Einstein universe are both reduced to a single point.
\end{example}

\begin{example}[Minkowski spacetime]
In Minkowski spacetime, there are remarkable TIPs (resp. TIFs) which are the chronological futures (resp. pasts) of causal straight lines: the chronological future (resp. past) of a \emph{timelike} straight line is the whole space and the chronological future (resp. past) of a \emph{lightlike} straight line is equal to chronological future (resp. past) of the unique degenerate hyperplane containing this line (see e.g. \cite[Chapter 1, Lemme 1.2.3]{Smai2022}). It turns out that these are the only TIPs and TIFs of Minkowski spacetime (see e.g. \cite[Annexe C.1, Exemple 2]{Smai2022}). 

The causal boundary of Minkowski spacetime can be interpreted as the conformal boundary of Minkowski spacetime (see Section \ref{sec: black holes}). 
\end{example}

\begin{figure}[h!] 
\centering
\includegraphics[scale=1.4]{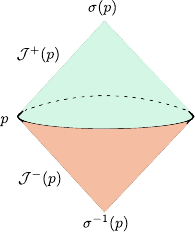}
\caption{Causal boundary of an affine chart $\Mink_0(p)$ in $\widetilde{Ein}_{1,2}$: the future causal boundary is the union of $\mathcal{J}^+(p)$ and $\sigma(p)$ and the past causal boundary is the union of $\mathcal{J}^-(p)$ and $\sigma^{-1}(p)$.} 
\label{figure: penrose}
\end{figure}

\begin{remark}
Minkowski spacetime $\Mink_0(p)$ admits one maximal TIP, $\sigma(p)$ and one maximal TIF, $\sigma^{-1}(p)$.
\end{remark}

\subsection{Topology on the causal completion}

We show here that the topology of $M$ extends naturally to the causal completion of $M$. We start with defining a topology on the set $\hat{M}$ of IPs. Since $M$ is globally hyperbolic, the topology of $M$ coincide with Alexandrov topology, namely the topology generated by the open subsets of the form $I^+(p)$, $I^-(p)$, $M \backslash J^+(p)$, $M \backslash J^-(p)$ where $p$ is a point of $M$. In other words, any open subset of $M$ is a union of finite intersections of the previous subsets. We extend this base of topology on $M$ to a base of topology on $\hat{M}$. We define two families of subsets of $\hat{M}$:
\begin{enumerate}
\item The first family is indexed by open subsets of $M$ of the form $I^+(p)$ or $M \backslash J^-(p)$ where $p$ is a point of $M$. For any open subset $U$ of this form, we call $\mathcal{O}_U$ the subset of $\hat{M}$ consisting in IPs $P$ such that $P \cap U \not = \emptyset$. 
\item The second family is indexed by open subsets of $M$ of the form $I^-(p)$ or $M \backslash J^+(p)$. For any open subset $U$ of this form, we call $\mathcal{O}'_U$ the subset of $\hat{M}$ consisting in IPs $P$ such that $\bar{P} \subset U$. 
\end{enumerate}

\begin{remark}\label{remark: induced topology on M}
The intersection of the set of PIPs with a subset $\mathcal{O}_U$ or $\mathcal{O}'_U$ is in bijection with $U$. 
\end{remark}

\begin{remark}\label{remark: open neighborhoods of a TIP}
\begin{enumerate}
\item A subset of the form $\mathcal{O}'_U$ where $U = I^-(p)$ does not contain any TIP. Indeed, suppose there exists a TIP $P$ contained in $\mathcal{O}'_U$, i.e. such that $\bar{P} \subset I^-(p)$. The TIP $P$ is the chronological past of some future causal curve $\gamma$ inextendible in the future. Then, $\gamma$ would be confined in $J^-(p)$. Contradiction.
\item A subset of the form $\mathcal{O}_U$ where $U = M \backslash J^-(p)$ contains all the TIPs of $M$. Indeed, let $P$ be a TIP. Suppose $P \not \in \mathcal{O}_U$. Then, $P \cap (M \backslash J^-(p)) = \emptyset$, equivalently $P \subset J^-(p)$. Contradiction.
\end{enumerate}
\end{remark}

Let $\tau$ be the topology on $\hat{M}$ generated by the subsets of the form $\mathcal{O}_U$ and $\mathcal{O}'_U$. It follows from Remark \ref{remark: induced topology on M} the following statement.

\begin{lemma}\label{lemma: M is homeomorphic to the PIPs}
The map from $M$ to $\hat{M}$ which sends a point $p \in M$ on the PIP $I^-(p)$ is a topological embedding. \qed
\end{lemma}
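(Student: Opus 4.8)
The plan is to show that the injective map $\iota\colon p \mapsto I^-(p)$ identifies the subbasis of the manifold topology of $M$ with the trace on $\iota(M)$ of the subbasis generating $\tau$, so that $\iota$ is a homeomorphism onto its image. Injectivity is already in hand: since $M$ is globally hyperbolic it is past-distinguishing, so $p \mapsto I^-(p)$ is one-to-one, and $\iota(M)$ is exactly the set of PIPs. It therefore suffices to compare topologies via their generating families. The subspace topology that $\tau$ induces on $\iota(M)$ is generated by the traces $\iota(M)\cap\mathcal{O}_U$ and $\iota(M)\cap\mathcal{O}'_U$, where $U$ ranges over the four admissible forms: $U = I^+(p)$ or $U = M\setminus J^-(p)$ for the $\mathcal{O}_U$ family, and $U = I^-(p)$ or $U = M\setminus J^+(p)$ for the $\mathcal{O}'_U$ family.

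The heart of the argument is Remark~\ref{remark: induced topology on M}, which I would establish by the four identities $\iota(M)\cap\mathcal{O}_U = \iota(U)$ and $\iota(M)\cap\mathcal{O}'_U = \iota(U)$. Each is proved by translating the defining condition on a PIP $I^-(q)$ into a condition on the point $q$. Concretely: $I^-(q)\cap I^+(p)\neq\emptyset \iff q\in I^+(p)$ by the push-up property; $I^-(q)\cap(M\setminus J^-(p))\neq\emptyset \iff q\notin J^-(p)$, using that $q\in\overline{I^-(q)}$ together with the closedness of $J^-(p)$ in the globally hyperbolic $M$; $\overline{I^-(q)}\subset I^-(p) \iff q\ll p$, using $\overline{I^-(q)}\subset J^-(q)$ and $r\le q\ll p \Rightarrow r\ll p$; and $\overline{I^-(q)}\cap J^+(p)=\emptyset \iff q\notin J^+(p)$, using $\overline{I^-(q)}\subset J^-(q)$ and transitivity of the causal relation. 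This causal bookkeeping is where I expect the only real work to lie, and it is the main obstacle of the argument; everything relies on the standard facts that $J^\pm$ are closed and that chronological and causal relations compose by push-up in a GH spacetime.

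With the Remark in force, the conclusion is immediate. Because $M$ is globally hyperbolic its manifold topology coincides with the Alexandrov topology, a subbasis of which is precisely the family $\{I^+(p),\,I^-(p),\,M\setminus J^+(p),\,M\setminus J^-(p) : p\in M\}$. The Remark shows that $\iota$ sends each member $U$ of this subbasis to the trace $\iota(U)=\iota(M)\cap\mathcal{O}_U$ (resp.\ $\iota(M)\cap\mathcal{O}'_U$), and these traces form a subbasis of the subspace topology on $\iota(M)$. Thus $\iota$ carries a subbasis of the topology of $M$ bijectively onto a subbasis of the subspace topology of its image; a bijection matching subbases is a homeomorphism. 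Hence $\iota\colon M\to\hat{M}$ is a topological embedding, as claimed.
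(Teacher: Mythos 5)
Your proof is correct and follows essentially the same route as the paper: the paper deduces the lemma directly from Remark~\ref{remark: induced topology on M} (that PIPs in $\mathcal{O}_U$ or $\mathcal{O}'_U$ correspond exactly to points of $U$) together with the identification of the manifold topology with the Alexandrov topology. You have simply supplied the causal-theoretic verifications of the four trace identities and the subbasis-matching argument that the paper leaves implicit, and these verifications are sound.
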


\begin{proposition}\label{prop: topology on IPs is Hausdorff}
The topology $\tau$ is Hausdorff.
\end{proposition}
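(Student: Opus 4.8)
The plan is to separate two distinct IPs $P \neq Q$ by disjoint basic open sets, and the whole argument will be organized around a single well-chosen point $p_1 \in M$ together with the pair of neighborhoods
\[
\mathcal{A} := \mathcal{O}_{I^+(p_1)}, \qquad \mathcal{B} := \mathcal{O}'_{M \setminus J^+(p_1)}.
\]
These are disjoint for a purely formal reason: if an IP $R$ lay in both, then $R$ would meet $I^+(p_1)$ while satisfying $\bar{R} \cap J^+(p_1) = \emptyset$; but any point $y \in R \cap I^+(p_1)$ lies in $\bar{R} \cap J^+(p_1)$, a contradiction. So the real content is to choose $p_1$ so that simultaneously $P \in \mathcal{A}$ and $Q \in \mathcal{B}$. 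Note that this scheme never distinguishes PIPs from TIPs, so all cases are handled uniformly.

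First I would record the elementary equivalence valid for any open past set $R$: $p \in R$ if and only if $R \cap I^+(p) \neq \emptyset$. The backward direction uses that $R$ is a past set (a point $y \in R \cap I^+(p)$ gives $p \in I^-(y) \subset I^-(R) \subset R$), while the forward direction uses that $R$ is open together with $p \in \overline{I^+(p)}$. Consequently $P \in \mathcal{A}$ reduces to $p_1 \in P$, and $Q \in \mathcal{B}$ reduces to $J^+(p_1) \cap \bar{Q} = \emptyset$. By symmetry we may assume $P \not\subset Q$. The key lemma I would isolate is that, for open past sets, $P \subset \bar{Q}$ forces $P \subset Q$: given $p \in P$, pick $p^+ \in I^+(p) \cap P$ (nonempty since $P$ is open); as $p^+ \in \bar{Q}$ there is a sequence $q_n \to p^+$ in $Q$, and since $I^+(p)$ is open and contains $p^+$, eventually $q_n \in I^+(p)$, whence $p \in I^-(q_n) \subset Q$. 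Contrapositively, $P \not\subset Q$ produces a point $x \in P \setminus \bar{Q}$.

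With such an $x$ in hand I would set $p_1$ to be any point of $I^+(x) \cap P$ (nonempty because $P$ is open and $x \in \overline{I^+(x)}$). Then $J^+(p_1) \cap \bar{Q} = \emptyset$: any $z \in J^+(p_1) \cap \bar{Q}$ satisfies $x \ll p_1$ and $p_1 \leq z$, hence $x \ll z$ by the push-up relation $I^+ \circ J^+ \subset I^+$; writing $z = \lim q_n$ with $q_n \in Q$ and using that $I^+(x)$ is open would give $x \ll q_n$, so $x \in I^-(q_n) \subset Q \subset \bar{Q}$, contradicting $x \notin \bar{Q}$. Since $p_1 \in P$ and $J^+(p_1) \cap \bar{Q} = \emptyset$, this $p_1$ places $P$ in $\mathcal{A}$ and $Q$ in $\mathcal{B}$, completing the separation.

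The step I expect to be the main obstacle is the key lemma $P \subset \bar{Q} \Rightarrow P \subset Q$ and its sharpening that lets the entire cone $J^+(p_1)$ — not merely the single point $x$ — avoid $\bar{Q}$. It is precisely here that the defining property $I^-(Q) \subset Q$ of a past set, the push-up relation $I^+ \circ J^+ \subset I^+$, and the existence of the auxiliary future points $p^+$ and $p_1$ (guaranteed by openness and strong causality) all come into play. By contrast, the membership characterization for $\mathcal{O}_U$ and the disjointness of $\mathcal{A}$ and $\mathcal{B}$ are routine once the point $p_1$ is produced.
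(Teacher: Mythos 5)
Your proof is correct and follows essentially the same route as the paper: both reduce separation to producing a point of one IP outside the \emph{closure} of the other (your lemma that $P \subset \bar{Q}$ forces $P \subset Q$ for open past sets is exactly the paper's observation that the interior of $\bar{U}$ equals $U$, which it justifies via achronality of the boundary of a past set), and both then separate using basic neighborhoods of the form $\mathcal{O}_{I^+(\cdot)}$ and $\mathcal{O}'_{M \setminus J^+(\cdot)}$. The only difference is one of detail: you interpose the auxiliary point $p_1 \in I^+(x) \cap P$ and verify $J^+(p_1) \cap \bar{Q} = \emptyset$ explicitly via the push-up relation, whereas the paper takes the point $x \in V \setminus \bar{U}$ itself and leaves that verification implicit.
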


\begin{proof}
Let $U$ and $V$ two distincts IPs. Then, $V \backslash U$ or $U \backslash V$ is non-empty. We suppose without loss of generality that $V \backslash U$ is non-empty. We prove that consequently, $V \backslash \bar{U}$ is non-empty. Suppose $V \backslash \bar{U} = \emptyset$, i.e. $V \subset \bar{U}$. Since $U$ is a past set, the interior of $\bar{U}$ is exactly $U$ \footnote{This follows immediately from the fact that the boundary of a past set is a closed achronal topological hypersurface of $M$ (see \cite[Corollary 27, p. 415]{oneill}).}. Then, $V \subset U$, i.e. $V \backslash U = \emptyset$. Contradiction. Hence, $V \backslash \bar{U} \not = \emptyset$. Let $w \in V \backslash \bar{U}$. Set $W = I^+(w)$ and $W' = M \backslash J^+(w)$. Then, $\mathcal{O}'_{W'}$ and $\mathcal{O}_W$ are two disjoint neighborhoods of $U$ and $V$ respectively.
\end{proof}

\begin{proposition}\label{prop: PIPs are dense in IPs}
The subspace of $\hat{M}$ consisting in PIPs of $M$ is an open subset of $\hat{M}$ dense in $\hat{M}$.
\end{proposition}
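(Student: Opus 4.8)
The plan is to prove the two assertions—openness and density—separately, both leveraging Remark \ref{remark: open neighborhoods of a TIP} on the basic open sets $\mathcal{O}_U$ and $\mathcal{O}'_U$, together with the standard causal facts available in the GH setting.

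For openness, I would start from an arbitrary PIP $P_0 = I^-(q)$ with $q \in M$ and exhibit a basic open neighborhood of $P_0$ consisting only of PIPs. The key observation is Remark \ref{remark: open neighborhoods of a TIP}(1): a set $\mathcal{O}'_U$ with $U = I^-(p)$ contains no TIP, hence (since every element of $\hat{M}$ is either a PIP or a TIP) consists entirely of PIPs. So it suffices to find $p \in M$ with $P_0 \in \mathcal{O}'_{I^-(p)}$, that is, with $\overline{I^-(q)} \subset I^-(p)$. I would pick any $p \in I^+(q)$ (nonempty since $M$ is a spacetime without boundary) and verify the inclusion: in a GH spacetime $J^-(q)$ is closed, so $\overline{I^-(q)} \subset J^-(q)$, and for $r \in J^-(q)$ the chain $r \leq q \ll p$ yields $r \ll p$ by the push-up property; hence $\overline{I^-(q)} \subset I^-(p)$. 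Then $\mathcal{O}'_{I^-(p)}$ is a neighborhood of $P_0$ entirely contained in the set of PIPs, proving that this set is open.

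For density, I would show that every TIP lies in the closure of the set of PIPs by realizing it as a limit of the PIPs along its defining curve. Writing a TIP as $P = I^-(\gamma)$ with $\gamma$ future-inextendible, recall $P = \bigcup_t I^-(\gamma(t))$, with the pasts $I^-(\gamma(t))$ increasing in $t$. I claim every basic open neighborhood $\mathcal{O}$ of $P$ contains some PIP $I^-(\gamma(t))$. Such an $\mathcal{O}$ is a finite intersection of generators containing $P$. For a factor $\mathcal{O}_U$ of the first family, $P \cap U \neq \emptyset$ produces a point $x \in I^-(\gamma(t_U)) \cap U$ for some $t_U$, so $I^-(\gamma(t)) \in \mathcal{O}_U$ for all $t \geq t_U$. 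For a factor $\mathcal{O}'_V$ of the second family, Remark \ref{remark: open neighborhoods of a TIP}(1) forces $V$ to be of the form $M \backslash J^+(p)$ (as $P$ is a TIP), and since $I^-(\gamma(t)) \subset P$ gives $\overline{I^-(\gamma(t))} \subset \bar{P} \subset V$, we get $I^-(\gamma(t)) \in \mathcal{O}'_V$ for every $t$. Taking $t$ larger than the finitely many thresholds $t_U$ then places the PIP $I^-(\gamma(t))$ in $\mathcal{O}$, which establishes density.

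I expect the main obstacle to be the openness step—specifically the verification that $\overline{I^-(q)} \subset I^-(p)$—since this is where the Lorentzian causal structure genuinely enters, through the closedness of $J^-$ in a GH spacetime combined with the push-up property. The density step is then essentially monotone-exhaustion bookkeeping: matching a single large parameter $t$ against the finitely many first-family generators, while the second-family generators impose no constraint thanks to the inclusion $I^-(\gamma(t)) \subset P$.
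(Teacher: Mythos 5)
Your proof is correct, and on the density half it follows essentially the paper's route: the paper likewise takes an increasing sequence of parameters along the defining curve of the TIP $P = I^-(\gamma)$ and shows the PIPs $P_n = I^-(\gamma(t_n))$ converge to $P$, using $P_n \subset P$ to dispose of the second-family generators and picking a point $p \in P$ eventually absorbed into $P_n$ for the first family. Where you genuinely diverge is openness: the paper dispatches it in one line by invoking Lemma \ref{lemma: M is homeomorphic to the PIPs} (the embedding of $M$ as the set of PIPs), whereas you construct, around each PIP $I^-(q)$, the explicit basic neighborhood $\mathcal{O}'_{I^-(p)}$ for $p \in I^+(q)$, which by Remark \ref{remark: open neighborhoods of a TIP}(1) contains no TIP; your verification that $\overline{I^-(q)} \subset I^-(p)$ --- closedness of $J^-(q)$ under global hyperbolicity plus the push-up property $r \leq q \ll p \Rightarrow r \ll p$ --- is sound. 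This direct argument actually buys rigor: a topological embedding need not have open image, so the paper's deduction of openness from its lemma leaves implicit exactly the neighborhood construction you carry out. A further small point in your favor is that you check convergence against finite intersections of \emph{all four} generator types, treating $U = M \backslash J^-(p)$ uniformly with $U = I^+(p)$ via a single threshold $t_U$, whereas the paper's reduction only explicitly verifies the cases $U = I^+(p)$ and $U = M \backslash J^+(p)$; your bookkeeping closes that minor gloss as well.
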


\begin{proof}
By Lemma \ref{lemma: M is homeomorphic to the PIPs}, the set of PIPs is open in $\hat{M}$. Let $P = I^-(\gamma)$ be an TIP where $\gamma: [a,b[ \to M$ is a future-inextendible causal curve. Let $\{t_n\}$ be an increasing sequence of times in the interval $[a,b[$. We call $P_n$ the PIP defined as the chronological past of $\gamma(t_n)$. We prove that $\{P_n\}$ converges to $P$. By Remark \ref{remark: open neighborhoods of a TIP}, it is sufficient to prove that any open neighborhood of $P$ of the form $\mathcal{O}_U$ where $U = I^+(p)$ with $p \in M$, or of the form $\mathcal{O}'_U$ where $U = M \backslash J^+(p)$ with $p \in M$, contains all the $P_n$ for $n$ big enough.

Since $P_n \subset P$, it is clear that any open neighborhood $\mathcal{O}'_U$ of $P$ contains all the $P_n$. Let $\mathcal{O}_U$ be a neighborhood of $P$ where $U = I^+(p)$. Then, $p \in P$.  Since $\{t_n\}$ is increasing, for $n$ big enough, $p \in I^-(\gamma(t_n)) = P_n$. Hence, $P_n \cap I^+(p) \not = \emptyset$, i.e. $P_n \in \mathcal{O}_U$ for $n$ big enough. We deduce that $\{P_n\}$ converges to $P$.
\end{proof}

\begin{remark}
The same construction with the reverse time orientation defines a topology on the set $\check{M}$ of IFs. Propositions \ref{prop: topology on IPs is Hausdorff} and \ref{prop: PIPs are dense in IPs} still hold for $\check{M}$.
\end{remark}

Propositions \ref{prop: topology on IPs is Hausdorff} and \ref{prop: PIPs are dense in IPs} are the best we can say for a general globally hyperbolic spacetime. 

\section{Causal completion of developable GH conformally flat spacetimes} \label{sec: causal completion in the conformally flat setting}

We devote this section to the proof of our main result:

\begin{theorem} \label{thm: causal completion}
Let $M$ be a developable conformally flat GH spacetime of dimension $n \geq 3$ without conjugate points. The causal completion of $M$ is a topological manifold with boundary, homeomorphic to $S \times [0,1]$ where $S$ is any Cauchy hypersurface of $M$.
\end{theorem}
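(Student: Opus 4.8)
The plan is to transfer the whole problem into the \emph{enveloping space} $E(M)=\mathcal{B}\times\R$ of \cite{smai2023enveloping}, inside which $M$ sits as a causally convex open subset equal to the region $\{(x,t):\ f^-(x)<t<f^+(x)\}$ lying strictly between the graphs of two $1$-Lipschitz functions $f^\pm$ defined on $\Omega:=\{x\in\mathcal{B}:\ f^-(x)<f^+(x)\}$. Since $\Omega$ is the image of $M$ under the projection $\mathcal{B}\times\R\to\mathcal{B}$ and any Cauchy hypersurface $S$ of $M$ is an achronal graph over $\Omega$, we have $\Omega\cong S$. The graphs $\Gamma^\pm:=\mathrm{graph}(f^\pm)$ are achronal topological hypersurfaces of $E(M)$ constituting the future, resp. past, boundary of $M$ inside $E(M)$. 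The goal is to show that $\Gamma^+$ (resp. $\Gamma^-$) is canonically homeomorphic to the future (resp. past) causal boundary of $M$, and then that the resulting completion is the product $\Omega\times[0,1]$.

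First I would set up the bijection between TIPs and points of $\Gamma^+$. To $y\in\Gamma^+$ I associate $\Phi^+(y):=I^-(y,E(M))\cap M$. Causal convexity of $M$ gives $I^-(\gamma,M)=I^-(\gamma,E(M))\cap M$ for every causal curve $\gamma$ of $M$, so $\Phi^+(y)$ is a past set of $M$. Its indecomposability follows from Lemma~\ref{lemma: sufficient condition to be an IP}: given $a,b\in\Phi^+(y)$, the indecomposability of the PIP $I^-(y)$ of $E(M)$ furnishes a point $c$ with $a,b\ll c\ll y$, and the key observation is that such a $c$ lies in $M$ — it is below $\Gamma^+$ because the timelike segment from $c$ to $y$ meets the achronal hypersurface $\Gamma^+$ only at $y$, and above $\Gamma^-$ because $c\in I^+(a)$ with $a$ above the achronal hypersurface $\Gamma^-$. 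Hence $\Phi^+(y)$ is an IP, and it is terminal because $y\notin M$. Conversely, a future-inextensible causal curve of $M$ with no endpoint in $M$, viewed in the globally hyperbolic space $E(M)$ and trapped below $\Gamma^+$, must possess a future endpoint $y\in\Gamma^+$; continuity of the chronological past then yields $I^-(\gamma,M)=\Phi^+(y)$. Injectivity of $\Phi^+$ is precisely the achronality of $\Gamma^+$, since distinct points of $\Gamma^+$ are not causally related and their pasts already differ inside $M$.

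Next I would identify the topologies. On the target, $\Gamma^+$ carries the subspace topology of $E(M)$ and projects homeomorphically onto $\Omega$; on the source, the TIPs carry the trace of the Geroch--Kronheimer--Penrose topology $\tau$ of Section~\ref{sec: causal completion}. I would match the two by testing $\Phi^+$ against the generators of $\tau$: for $p\in M$, the TIP $\Phi^+(y)$ meets $I^+(p)$ iff $y\in I^+(p)$, and satisfies $\overline{\Phi^+(y)}\subset M\setminus J^+(p)$ iff $y\notin J^+(p)$; so the traces on the TIPs of $\mathcal{O}_{I^+(p)}$ and of $\mathcal{O}'_{M\setminus J^+(p)}$ correspond under $\Phi^+$ to the open subsets $\Gamma^+\cap I^+(p)$ and $\Gamma^+\setminus J^+(p)$ of $\Gamma^+$, whose finite intersections generate its subspace topology. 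Thus $\Phi^+$ is a homeomorphism onto $\Gamma^+$, and dually $\Phi^-$ onto $\Gamma^-$. Glueing these identifications to the embedding $M\hookrightarrow E(M)$ (Lemma~\ref{lemma: M is homeomorphic to the PIPs}) realizes $M^\sharp$ as the closed region $\overline{M}=\{(x,t):\ x\in\Omega,\ f^-(x)\le t\le f^+(x)\}$ of $E(M)$, a topological manifold with boundary $\Gamma^+\sqcup\Gamma^-$.

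Finally, the product structure comes from the fiberwise renormalization $(x,t)\mapsto\bigl(x,\tfrac{t-f^-(x)}{f^+(x)-f^-(x)}\bigr)$, which is a homeomorphism from $\overline{M}$ onto $\Omega\times[0,1]\cong S\times[0,1]$ since $f^\pm$ are continuous and $f^-<f^+$ on $\Omega$. I expect the main obstacle to be the topological identification of the third paragraph together with the convergence step of the second: proving that an arbitrary future-inextensible causal curve of $M$ really limits onto a single point of $\Gamma^+$ (rather than drifting along the boundary or escaping to infinity) and that the abstract completion topology $\tau$ agrees with the concrete subspace topology inherited from $E(M)$. Both rest on the global hyperbolicity of $E(M)$, the causal convexity of $M$, and the finiteness and achronality of the boundary graphs $f^\pm$ — precisely the geometric content supplied by the enveloping space and the absence of conjugate points.
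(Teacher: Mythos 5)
Your overall strategy is the same as the paper's --- identify the future/past causal boundary with the graphs of $f^\pm$ in the enveloping space $E(M)$, then renormalize fiberwise to get $S \times [0,1]$ --- but two of your key steps have genuine gaps.

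First, you lean twice on ``the globally hyperbolic space $E(M)$,'' and global hyperbolicity of $E(M)$ is neither established nor true in general: the enveloping space is only known to be developable, hence strongly causal and distinguishing (indeed, if $E(M)$ were GH over the fiber $\mathcal{B}$, the $\mathcal{C}_0$-maximal extension would always be all of $E(M)$, which it is not). This matters most in your surjectivity step, where you assert that a future-inextensible causal curve of $M$, ``trapped below $\Gamma^+$,'' must acquire a future endpoint on $\Gamma^+$. Without compactness or completeness of the base $\mathcal{B}$ (and with $f^\pm$ possibly unbounded), nothing prevents such a curve from escaping to infinity in $E(M)$ while staying under the graph; this is exactly the nontrivial point. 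The paper avoids it by localizing: given a TIP $I^-(\gamma)$, pick $p_0 \in \gamma$, use global hyperbolicity of $M$ itself to develop $I^+(p_0, M)$ injectively onto a \emph{causally convex} subset of $\eeu$, and apply the Einstein-universe case (Proposition \ref{prop: causal boundary in ein}), where compactness of $\mathbb{S}^{n-1}$ and the $1$-Lipschitz bound on $f^+$ force every inextensible timelike curve to cross the graph exactly once. Your argument needs this localization-and-development step, or some substitute compactness argument, to go through.

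Second, your injectivity argument is wrong as stated: you claim ``distinct points of $\Gamma^+$ are not causally related'' because $\Gamma^+$ is achronal, but achronal does not mean acausal. The graph of a $1$-Lipschitz function can contain lightlike segments, and in fact in the maximal case it must: Fact \ref{fact: eikonal} says the graph of $f^+$ contains entire past-directed lightlike geodesics. So causally related distinct points $y, y' \in \Gamma^+$ genuinely occur, and one must still show $I^-(y) \cap M \neq I^-(y') \cap M$. The correct route, used in the paper, is: from $I^-(y) \cap M = I^-(y') \cap M$ deduce $I^-(I^-(y) \cap M) = I^-(I^-(y') \cap M)$, observe that the left side equals $I^-(y)$ because $y$ is approached chronologically from within $M$ (e.g.\ along the vertical timelike fiber below it), and conclude $y = y'$ from the past-distinguishing property of $E(M)$ --- which is available since $E(M)$ is strongly causal. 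Your indecomposability argument (producing $c$ with $a, b \ll c \ll y$ and $c \in M$) is essentially sound but is cleaner if you take $c$ on a timelike curve \emph{inside $M$} ending at $y$, rather than arguing via separation by the achronal graph; and your topology-matching paragraph, while plausible, is at the level of detail the paper itself leaves as ``easy to check,'' so I do not count it against you.
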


Notice that the assumptions \emph{developable} and \emph{without conjugate points} are necessary. Indeed, if one of them is not satisfied, we can easily point out counter-examples:
\begin{itemize}
\item The universal Einstein universe $\eeu$ is developable but contains conjugate points. Each of its future and past causal boundary is reduced to a single point. 
\item Consider the quotient of a regular domain $\Omega$ of $\R^{1,2}$ (in the sense of \cite{bonsante2005flat}) by a discrete subgroup of isometries $\Gamma$ of $\R^{1,2}$. In general, the action of $\Gamma$ on the singular points of the boundary of $\Omega$ in $\R^{1,2}$ (see \cite[Section 4]{bonsante2005flat}) is neither free nor properly discontinous. The quotient of the causal completion of $\Omega$ by $\Gamma$ is then far from being a topological manifold.
\end{itemize}

In Section \ref{sec: causally convex subsets of eeu}, we prove Theorem \ref{intro: main theorem} for causally convex open subsets of $\eeu$ before dealing with the general case in Section \ref{sec: general case}.

\subsection{The case of causally convex open subsets of Einstein universe} \label{sec: causally convex subsets of eeu}

Let $\Omega$ be a causally convex open subset of $\eeu$ without conjugate points. \\
By \cite[Proposition 3]{smai2023enveloping}, there exist two $1$-Lipschitz real-valued functions $f^+$ and $f^-$ defined on an open subset $U$ of $\mathbb{S}^{n-1}$ whose extensions to $\partial U$ coincide, such that
\begin{align*}
\Omega &= \{(x,t) \in U \times \R;\ f^-(x) < t < f^+(x)\}.
\end{align*}

\begin{proposition} \label{prop: causal boundary in ein}
The future (resp. past) causal boundary of $\Omega$ is homeomorphic to the graph of $f^+$ (resp. $f^-$).
\end{proposition}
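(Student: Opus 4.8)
The plan is to realize the future causal boundary of $\Omega$ through the explicit geometry of $\eeu$, exploiting that $\Omega=\{(x,t)\in U\times\R:\ f^-(x)<t<f^+(x)\}$ and that causal convexity makes the chronological relations of $\Omega$ agree with those induced from $\eeu$: any timelike curve of $\eeu$ joining two points of $\Omega$ stays in $\Omega$, so the intrinsic chronological future/past of a point of $\Omega$ are $I^\pm(\cdot)\cap\Omega$. First I would define the candidate map $\Phi$ sending $p=(x_0,f^+(x_0))\in\mathrm{graph}(f^+)$ to $P_p:=I^-(p)\cap\Omega$ and check that $P_p$ is a TIP. Transitivity of the chronological past shows $P_p$ is a past set of $\Omega$; to see it is indecomposable I would apply Lemma \ref{lemma: sufficient condition to be an IP}: given $a,b\in P_p$, the point $p$ lies in the open set $I^+(a)\cap I^+(b)$, so $(x_0,f^+(x_0)-\epsilon)$ lies in $I^+(a)\cap I^+(b)\cap I^-(p)\cap\Omega=I^+(a)\cap I^+(b)\cap P_p$ for $\epsilon>0$ small, supplying the required common future inside $P_p$. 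That $P_p$ is \emph{terminal} I would read off from $I^-(p)=\{(x,t):\ d(x,x_0)<f^+(x_0)-t\}$ together with the $1$-Lipschitz bound $f^+(x_0)-d(x,x_0)\le f^+(x)$: the upper profile of $P_p$ along each vertical line is $f^+(x_0)-d(x,x_0)$, equal to the ceiling $f^+$ exactly at $x_0$, so $\overline{P_p}$ (closure in $\eeu$) meets $\mathrm{graph}(f^+)$; whereas for $q=(x_q,t_q)\in\Omega$ the past cone $t_q-d(x,x_q)<f^+(x_q)-d(x,x_q)\le f^+(x)$ bounding $I^-(q)\cap\Omega$ stays strictly below $f^+$, so no PIP accumulates on $\mathrm{graph}(f^+)$. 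Hence $P_p$ is a TIP.

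Next I would prove $\Phi$ is a bijection onto the future causal boundary. For injectivity, if $I^-(p)\cap\Omega=I^-(p')\cap\Omega$ with $p=(x_0,f^+(x_0))$, $p'=(x_0',f^+(x_0'))$, then $(x_0,f^+(x_0)-\epsilon)\in P_p$ lies in $I^-(p')$ for all small $\epsilon>0$, giving $d(x_0,x_0')\le f^+(x_0')-f^+(x_0)$; the symmetric inequality forces $d(x_0,x_0')=0$, i.e. $p=p'$. For surjectivity, let $P=I^-(\gamma)\cap\Omega$ be a TIP with $\gamma(s)=(x(s),t(s))$ future-inextensible in $\Omega$; here $t$ is nondecreasing and bounded (as $f^+$ is bounded on the compact $\mathbb{S}^{n-1}$) and $x(\cdot)$ is $1$-Lipschitz in $t$, so by completeness of $\mathbb{S}^{n-1}$ there is a limit $p=(x_\infty,T)\in\overline\Omega$.

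The decisive point — and essentially the only place the shape of $\Omega$ enters — is to exclude $x_\infty\in\partial U$. There $f^+(x_\infty)=f^-(x_\infty)=:c$, squeezing forces $T=c$, staying in $\Omega$ gives $t(s)>f^-(x(s))\ge c-d(x(s),x_\infty)$, i.e. $d(x(s),x_\infty)>c-t(s)$, while causality gives $d(x(s),x_\infty)\le c-t(s)$, a contradiction. Hence $x_\infty\in U$; since $\gamma$ is future-directed with no endpoint in the open set $\Omega$, necessarily $T=f^+(x_\infty)$, and taking the union of the $I^-(\gamma(s))$ yields $P=I^-(p)\cap\Omega=\Phi(p)$ with $p\in\mathrm{graph}(f^+)$. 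Finally I would upgrade $\Phi$ to a homeomorphism for the $\tau$-topology. Continuity reduces, for $x_n\to x_0$, to checking each basic neighborhood of $\Phi(p)$: neighborhoods $\mathcal{O}_V$ with $V=I^+(q)$ are handled because a witness $z\in P_p\cap I^+(q)$ satisfies the open condition $z\in I^-(p_n)$ for large $n$; neighborhoods $\mathcal{O}_V$ with $V=\Omega\setminus J^-(q)$ contain every TIP; and neighborhoods $\mathcal{O}'_V$ with $V=\Omega\setminus J^+(q)$ amount to the disjointness $J^+(q)\cap\overline{P_p}=\emptyset$, stable under small motion of $x_0$ by the same cone estimates. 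For continuity of $\Phi^{-1}$ I would argue by sequences: from $\Phi(p_n)\to\Phi(p)$ extract, by compactness of $\mathbb{S}^{n-1}$, a subsequence $x_{n_k}\to x_*$; the pinching estimate above rules out $x_*\in\partial U$ (otherwise the $\Phi(p_{n_k})$ would accumulate on an edge point and leave some $\mathcal{O}'_V$-neighborhood of $\Phi(p)$), so $x_*\in U$, whence continuity of $\Phi$ gives $\Phi(p_{n_k})\to\Phi(p_*)$, and the Hausdorffness of $\tau$ (Proposition \ref{prop: topology on IPs is Hausdorff}) with injectivity force $p_*=p$; as every subsequence admits such a sub-subsequence, $x_n\to x_0$. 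The statement for $f^-$ and the past causal boundary follows by reversing the time-orientation. The main obstacle is exactly this last step — controlling the $\tau$-closures $\overline{P_{p_n}}$ and preventing the ideal points from escaping to the edge $\partial U$ — and it is the causal-convexity/Lipschitz pinching estimate that resolves it.
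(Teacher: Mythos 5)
Your proof is correct and follows the same overall scheme as the paper's --- the same canonical map $p \mapsto I^-(p)\cap\Omega$, shown to be a bijection and then a homeomorphism --- but it is genuinely more self-contained in how the sub-steps are justified. Where the paper imports its two key inputs from outside (injectivity via the past-distinguishing property of $\eeu$, and surjectivity via the citation \cite[Lemma 6]{smai2023enveloping} asserting that an inextensible timelike curve of $\eeu$ crosses the graph of $f^+$ exactly once), you reprove both directly from the explicit model $\mathbb{S}^{n-1}\times\R$: injectivity from the two symmetric Lipschitz inequalities $d(x_0,x_0')\le f^+(x_0')-f^+(x_0)$ and its mirror, and surjectivity by extracting the limit point of $\gamma$ from monotonicity and boundedness of $t$ plus the $1$-Lipschitz control on $x(\cdot)$, with the pinching estimate at $\partial U$ (where $f^+=f^-$) playing exactly the role of the cited crossing lemma. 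You also supply two things the paper leaves implicit: a verification that $I^-(p)\cap\Omega$ is actually a \emph{terminal} IP (via Lemma \ref{lemma: sufficient condition to be an IP} and the profile comparison showing no PIP of $\Omega$ touches the ceiling), which is needed for the map to be well defined, and a sketch of the homeomorphism claim that the paper dismisses as ``easy to check,'' organized around the basic open sets $\mathcal{O}_V$ and $\mathcal{O}'_V$. Two small loose ends, neither fatal: at the limit point you exclude only $f^-(x_\infty)<T<f^+(x_\infty)$ by inextensibility, and the remaining case $T=f^-(x_\infty)$ must be ruled out separately --- the same pinching estimate does it, since $d(x(s_0),x_\infty)\le T-t(s_0)$ together with the Lipschitz bound on $f^-$ would give $f^-(x(s_0))\ge t(s_0)$, contradicting $\gamma\subset\Omega$; and your sequential argument for continuity of $\Phi^{-1}$ tacitly uses countability properties of the topology $\tau$ on the boundary (for the domain, the graph is metrizable, so sequential continuity of $\Phi$ suffices; for $\mathcal{O}'_V$-stability, note that for $q\in\Omega$ the condition $\overline{P_p}\cap J^+(q)=\emptyset$ is equivalent to $J^-(p)\cap J^+(q)=\emptyset$ in $\eeu$, i.e.\ to the manifestly open condition $d(x_0,x_q)>f^+(x_0)-t_q$, which makes the stability you invoke precise). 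The trade-off is clear: the paper's proof is shorter because it leans on \cite{smai2023enveloping} and general causality theory, while yours is longer but purely computational in the model and would survive verbatim in a setting where those references were unavailable.
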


\begin{proof}
Let $f$ be the map which associates to every point $p$ in the graph of $f^+$ the TIP $I^-(p) \cap \Omega$ of $\Omega$. We prove that $f$ is bijective. Let $p, q$ two points in the graph of $f^+$ such that $I^-(p) \cap \Omega = I^-(q) \cap \Omega$. Then, $I^-(I^-(p) \cap \Omega) = I^-(I^-(q) \cap \Omega)$. Hence, $I^-(p) = I^-(q)$. Since $\eeu$ is past-distinguishing, we deduce that $p = q$. Thus, $f$ is injective.

Now, let $P$ be a TIP of $\Omega$. Then, there exists an inextensible timelike curve $\gamma$ of $\Omega$ such that $P = I^-(\gamma, \Omega)$. Since $\Omega$ is causally convex, $\gamma$ is the intersection of an inextensible timelike curve $\tilde{\gamma}$ with $\Omega$. By \cite[Lemma 6]{smai2023enveloping}, $\tilde{\gamma}$ intersects the graph of $f^+$ exactly once in a point $p$. Thus, $P = I^-(p) \cap \Omega$. The map $f$ is then surjective.

We deduce that $f$ is bijective. It is easy to check that it is a homeomorphism.
\end{proof}

\begin{corollary}
The causal completion of $\Omega$ is a topological manifold with boundary homeomorphic to $S \times [0,1]$ where $S$ is a Cauchy hypersurface of $\Omega$. \qed
\end{corollary}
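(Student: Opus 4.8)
The statement to prove is the corollary: the causal completion of $\Omega$ is a topological manifold with boundary homeomorphic to $S \times [0,1]$, where $\Omega$ is a causally convex open subset of $\eeu$ without conjugate points, described as the region between the graphs of $f^-$ and $f^+$ over an open set $U \subset \mathbb{S}^{n-1}$.

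The plan is to assemble the causal completion directly from the three pieces: the interior $\Omega$, the future causal boundary, and the past causal boundary, using Proposition \ref{prop: causal boundary in ein} to identify the boundary pieces concretely. By that proposition, the future causal boundary is homeomorphic to the graph of $f^+$ and the past causal boundary is homeomorphic to the graph of $f^-$; each graph, being the image of a continuous function on $U$, is homeomorphic to $U$ itself via projection to the $\mathbb{S}^{n-1}$-factor. First I would fix a concrete Cauchy hypersurface $S$ of $\Omega$. Since $\Omega$ lies strictly between the two graphs over $U$, the slice given by a $1$-Lipschitz function strictly interpolating $f^-$ and $f^+$ on $U$ (for instance $\tfrac12(f^- + f^+)$, whose graph is an acausal Cauchy hypersurface) furnishes such an $S$, and $S$ is homeomorphic to $U$ via the projection.

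Next I would build the global homeomorphism $\Omega^\sharp \cong U \times [0,1]$. The idea is that the vertical foliation of $\eeu$ by timelike lines $\{x\} \times \R$ restricts to give, over each $x \in U$, a timelike segment $\{x\} \times (f^-(x), f^+(x))$ in $\Omega$; compactifying this open interval by adjoining its two endpoints $(x, f^-(x))$ and $(x, f^+(x))$ yields a closed interval, and these endpoints are exactly the past and future ideal points attached to the inextensible timelike curves in that fiber (this is precisely the content of the bijection $f$ constructed in the proof of Proposition \ref{prop: causal boundary in ein}, together with its past analogue). I would thus define the map $\Phi \colon U \times [0,1] \to \Omega^\sharp$ by sending $(x,s)$ to the point on the fiber over $x$ at normalized height $s$, where $s=0,1$ hit the past and future boundary ideal points and $s \in (0,1)$ hits the interior point $(x, f^-(x) + s\,(f^+(x) - f^-(x)))$. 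Composing with the projection $U \cong S$ then gives the desired $S \times [0,1]$.

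The main obstacle is continuity of $\Phi$ and of its inverse at the boundary $s \in \{0,1\}$, i.e. checking that the Alexandrov-type topology $\tau$ on the ideal points (generated by the $\mathcal{O}_V$ and $\mathcal{O}'_V$ of Section \ref{sec: causal completion}) agrees with the product topology near the graphs of $f^\pm$. The key point is that a sequence of interior points $(x_n, t_n)$ with $t_n \nearrow f^+(x_n)$ and $x_n \to x$ converges, in the $\tau$-topology, to the TIP $I^-((x,f^+(x))) \cap \Omega$; this is established exactly as in Proposition \ref{prop: PIPs are dense in IPs}, using that the PIPs $I^-(x_n,t_n)\cap\Omega$ form an increasing-then-approaching family whose neighborhoods $\mathcal{O}_V$ and $\mathcal{O}'_V$ eventually contain them, and it relies on the $1$-Lipschitz regularity of $f^+$ to control the limiting chronological past. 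Continuity of $\Phi^{-1}$ follows because the projection to the $\mathbb{S}^{n-1}$-factor is continuous on each boundary graph and because distinct fibers have disjoint ideal endpoints (a consequence of $\eeu$ being past- and future-distinguishing, already used in Proposition \ref{prop: causal boundary in ein}). Once continuity in both directions is verified, $\Phi$ is a continuous bijection between the locally compact Hausdorff spaces $U \times [0,1]$ and $\Omega^\sharp$ (Hausdorffness of the latter coming from Proposition \ref{prop: topology on IPs is Hausdorff}), hence a homeomorphism, and manifold-with-boundary structure is immediate from the model $U \times [0,1]$ with $U$ an open subset of $\mathbb{S}^{n-1}$.
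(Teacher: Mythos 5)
Your overall architecture --- identifying $\Omega^\sharp$ with the closed region $\{(x,t):\ x \in U,\ f^-(x) \le t \le f^+(x)\}$ via Proposition \ref{prop: causal boundary in ein} and its past analogue, then rescaling the vertical fibers to get $U \times [0,1] \cong S \times [0,1]$ --- is exactly the argument the paper leaves implicit (the corollary carries no written proof). But one concrete step is wrong: the graph of $g = \tfrac12(f^- + f^+)$ is achronal, yet in general it is neither acausal nor a Cauchy hypersurface. Counterexample: in an affine chart $\Mink_0(p)$, take $\Omega = I^+(H) \cap I^-(H')$ where $H = H(q)$ and $H' = H(q')$ are parallel degenerate hyperplanes, $q$ and $q'$ lying on the same lightlike geodesic of $\mathcal{J}^+(p)$. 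This $\Omega$ is causally convex in $\eeu$ and has no conjugate points. Writing $q = (x_q, t_q)$ and $q' = (\gamma(\epsilon), t_q + \epsilon)$ for a unit-speed spherical geodesic $\gamma$ through $x_q$, one computes over $\gamma(s)$ with $s > \epsilon$ that $f^-(\gamma(s)) = t_q - s$ and $f^+(\gamma(s)) = t_q + 2\epsilon - s$, so $g(\gamma(s)) = t_q + \epsilon - s$ decreases at unit rate: the graph of $g$ contains a whole lightlike geodesic segment of $\Omega$, and the inextensible causal curve extending it meets that graph in infinitely many points. The repair is cheap and leaves your map $\Phi$ untouched: global hyperbolicity of $\Omega$ gives \emph{some} Cauchy hypersurface $S$ (Geroch), each vertical line $\{x\} \times (f^-(x), f^+(x))$ is an inextensible timelike curve of $\Omega$ and hence meets $S$ exactly once, and the resulting bijection $S \to U$ (the restriction to $S$ of the projection $\Omega \to U$) is a homeomorphism; since all Cauchy hypersurfaces of $\Omega$ are homeomorphic, the statement then holds for any $S$.

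A second, smaller caveat concerns your continuity argument at the boundary. Proposition \ref{prop: PIPs are dense in IPs} only treats the \emph{monotone} family $I^-(\gamma(t_n))$ along a single causal curve, whereas you need $\tau$-convergence of $I^-((x_n,t_n)) \cap \Omega$ for arbitrary sequences $(x_n, t_n) \to (x, f^+(x))$, so it is not ``established exactly as'' that proposition. The claim is true in $\eeu$ because chronological pasts are given by explicit Lipschitz inequalities, $I^-((x_0,t_0)) = \{(x,t):\ d(x,x_0) < t_0 - t\}$: the $\mathcal{O}_V$ condition with $V = I^+(p)$ is immediate from openness of $I^+(p)$, while the $\mathcal{O}'_V$ condition with $V = \Omega \setminus J^+(q)$ requires a short limit/compactness argument showing that a causal chain from $q$ through a point of $\overline{I^-((x_n,t_n))} \cap \Omega$ would force $\overline{P} \cap J^+(q) \neq \emptyset$ for $P = I^-((x,f^+(x))) \cap \Omega$, contradicting $\overline{P} \subset V$. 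This verification is precisely what hides behind the paper's ``it is easy to check'' in Proposition \ref{prop: causal boundary in ein} and the \qed{} on the corollary, so you should write it out rather than cite Proposition \ref{prop: PIPs are dense in IPs}.
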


\subsection{The general case} \label{sec: general case}

Let $M$ be a developable GH conformally flat spacetime without conjugate points. Let $E(M)$ be \emph{an enveloping space} of $M$: this is a conformally flat developable spacetime which fibers trivially over a conformally flat Riemannian manifold $\mathcal{B}$ diffeomorphic to a Cauchy hypersurface of $M$, in which $M$ embeds conformally as a causally convex open subset $\Omega$ (see \cite[Theorem 1]{smai2023enveloping}). By \cite[Prop. 10]{smai2023enveloping}, $\Omega$ is the domain bounded by the graphs of two real-valued functions $f^+$ and $f^-$ defined on an open subset of $\mathcal{B}$: 
\begin{align*}
\Omega &= \{(x,t) \in U \times \R;\ f^-(x) < t < f^+(x)\}.
\end{align*}
Moroever, the graphs of $f^+$ and $f^-$ satisfy the property of being achronal in $E(M)$.

\begin{proposition} \label{prop: causal boundary general conf. flat setting}
The future (resp. past) causal boundary of $\Omega$ is homeomorphic to the graph of $f^+$ (resp. $f^-$).
\end{proposition}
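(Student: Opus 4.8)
The plan is to reduce Proposition \ref{prop: causal boundary general conf. flat setting} to the already-established Proposition \ref{prop: causal boundary in ein} by exploiting the enveloping space. The essential point is that although $M$ (embedded as $\Omega$) is only \emph{developable} rather than a genuine subset of $\eeu$, its enveloping space $E(M)$ is itself a developable conformally flat spacetime which fibers trivially as $\mathcal{B} \times \R$, and the causal structure of $E(M)$ mimics that of $\eeu$: causal curves are, up to reparametrization, graphs of $1$-Lipschitz curves in $\mathcal{B}$, and achronal hypersurfaces are graphs of $1$-Lipschitz functions on $\mathcal{B}$. Since $\Omega$ is causally convex in $E(M)$ and bounded by the graphs of $f^+$ and $f^-$, the computation carried out for causally convex subsets of $\eeu$ transfers almost verbatim, with $\eeu$ replaced by $E(M)$ and $\mathbb{S}^{n-1}$ replaced by $\mathcal{B}$.

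Concretely, I would define $f$ to be the map sending a point $p$ in the graph of $f^+$ to the TIP $I^-(p) \cap \Omega$ of $\Omega$, and prove $f$ is a bijection onto the set of TIPs, exactly as before. For \textbf{injectivity}, if $I^-(p) \cap \Omega = I^-(q) \cap \Omega$ with $p,q$ in the graph of $f^+$, then taking the chronological past of both sides inside $E(M)$ gives $I^-(p) = I^-(q)$; since $E(M)$ is developable and GH it is past-distinguishing, so $p = q$. For \textbf{surjectivity}, any TIP $P$ of $\Omega$ is $I^-(\gamma,\Omega)$ for an inextensible timelike curve $\gamma$ of $\Omega$; because $\Omega$ is causally convex in $E(M)$, this $\gamma$ is the restriction to $\Omega$ of an inextensible timelike curve $\tilde\gamma$ of $E(M)$, and the achronality of the graph of $f^+$ (asserted in the statement) together with the graph description of $\Omega$ forces $\tilde\gamma$ to meet the graph of $f^+$ exactly once, at a point $p$, yielding $P = I^-(p) \cap \Omega$. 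The analogue of \cite[Lemma 6]{smai2023enveloping} is the key input here: it guarantees that an inextensible timelike curve in $E(M)$ crosses each achronal graph over $U$ exactly once.

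The \textbf{main obstacle} I anticipate is justifying that an inextensible timelike curve of $E(M)$ meets the graph of $f^+$ \emph{exactly once}, which in the $\eeu$ setting rested on the clean Lipschitz-graph parametrization of causal curves. In the general enveloping space this requires knowing that $E(M)$'s fibration over $\mathcal{B}$ has the same Lipschitz causal description; one should either invoke that $E(M)$ is locally modeled on $\eeu$ and the relevant statements are local, or cite the structural results on enveloping spaces from \cite{smai2023enveloping} directly. A second delicate point, as in Proposition \ref{prop: causal boundary in ein}, is verifying that the bijection $f$ is in fact a \emph{homeomorphism}: one must check continuity of $f$ and of $f^{-1}$ against the Alexandrov-type basis $\mathcal{O}_U, \mathcal{O}'_U$ defining the topology $\tau$ on $\hat{M}$, using that approximating PIPs $I^-(\gamma(t_n)) \cap \Omega$ converge to the TIP $P$ precisely when the points $\gamma(t_n)$ ascend toward $p$ on the graph of $f^+$. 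I would handle this by the same density argument as in Proposition \ref{prop: PIPs are dense in IPs}, noting that the graph of $f^+$ inherits the topology of $U \subset \mathcal{B}$, so that $f$ intertwines the natural topology on the graph with $\tau$ restricted to the future causal boundary.
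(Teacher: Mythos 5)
Your injectivity argument coincides with the paper's: $E(M)$ is developable, hence strongly causal and thus distinguishing, and taking chronological pasts of $I^-(p) \cap \Omega = I^-(q) \cap \Omega$ inside $E(M)$ gives $p = q$. The genuine gap is in surjectivity. You rest it on an ``analogue of \cite[Lemma 6]{smai2023enveloping}'' asserting that an inextensible timelike curve of $E(M)$ meets the graph of $f^+$ exactly once --- but no such statement is established, and it is precisely the crux. In $\eeu$ that lemma works because the base $\mathbb{S}^{n-1}$ is \emph{compact} and causal curves are entire $1$-Lipschitz graphs $(x(t),t)$, $t \in \R$, so a limit point on the achronal graph is guaranteed. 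The base $\mathcal{B}$ of an enveloping space is in general a non-compact, incomplete Riemannian manifold: a future-inextensible timelike curve of $\Omega$ could a priori escape to infinity in $\mathcal{B}$ while remaining below the graph of $f^+$, never acquiring a future endpoint in $E(M)$ at all. Your proposed fix --- that $E(M)$ is locally modelled on $\eeu$ and ``the relevant statements are local'' --- does not repair this: the exactly-once crossing is a global assertion, and local modelling gives no control over whether the endpoint exists.

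The paper closes this gap by a localization through the developing map rather than by any global graph description of $E(M)$. Given a TIP $P = I^-(\gamma, \Omega)$, it picks $p_0 \in \gamma$ and uses the fact that, $\Omega$ being globally hyperbolic, the restriction of the developing map to $I^+(p_0, \Omega)$ is \emph{injective} with \emph{causally convex} image in $\eeu$ (Rossi, \cite[Prop. 2.7 and Cor. 2.8]{salveminithesis}). The curve $\gamma_0 = \gamma \cap I^+(p_0,\Omega)$ then develops into a causally convex open subset of $\eeu$, where Proposition \ref{prop: causal boundary in ein} --- so, ultimately, the compactness of $\mathbb{S}^{n-1}$ --- produces the future endpoint, which pulls back to a point $p$ of the graph of $f^+$ lying in $I^+(p_0,\Omega)$, giving $P = I^-(p) \cap \Omega$. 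This injectivity-of-the-developing-map-on-futures device is the missing idea in your proposal; without it (or an actual proof that causal curves of $E(M)$ are globally Lipschitz graphs hitting the boundary graphs), your surjectivity step fails.
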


\begin{proof}
Consider the map $f$ which associates to every point $p$ in the graph of $f^+$ the TIP $I^-(p) \cap \Omega$. We show that $f$ is bijective. Since $E(M)$ is developable, it is strongly causal (see \cite[Lemma 8]{smai2023enveloping}) and thus distinguishing (see \cite[Remark 3.23]{minguzzi2008causal}). Therefore, the same arguments used in the proof of Proposition \ref{prop: causal boundary in ein} show that $f$ is injective.

Let $P$ be a TIP of $\Omega$. Then, there exists an inextensible timelike curve $\gamma$ of $\Omega$ such that $P = I^-(\gamma)$. Let $p_0 \in \gamma$. Since $\Omega$ is GH, the restriction of $\hat{D}$ to $I^+(p_0, \Omega)$ is injective and its image is causally convex in $\eeu$ (see \cite[Prop. 2.7 and Cor. 2.8, p.151]{salveminithesis}). Clearly, $\gamma \cap I^+(p_0, \Omega)$ is a future-inextensible timelike curve of $I^+(p_0, \Omega)$, denoted $\gamma_0$. Therefore, by Proposition \ref{prop: causal boundary in ein}, $\gamma_0$ admits a future endpoint $p$ in the intersection of the graph of $f^+$ with $I^+(p_0, \Omega)$. Then, $P = I^-(p) \cap \Omega$. Thus, $f$ is surjective. It is easy to check that $f$ and its inverse are continuous. 
\end{proof}

Theorem \ref{thm: causal completion} follows immediately from Proposition \ref{prop: causal boundary general conf. flat setting}.

\section{$\mathcal{C}_0$-maximality and $\mathrm{C}$-maximality} \label{sec: maximality}

We devote this section to the proof of Theorem \ref{intro: C_0-maximality implies C-maximality}: we prove that any $\mathcal{C}_0$-maximal spacetime is $\mathrm{C}$-maximal. In Section \ref{sec: criterion of maximality}, we recall a criterion of $\mathrm{C}$-maximality involving the causal boundary. Then, in Section \ref{sec: C_0-maximality implies C-maximality}, we use this criterion to prove Theorem \ref{intro: C_0-maximality implies C-maximality}.

\subsection{Criterion of $\mathrm{C}$-maximality} \label{sec: criterion of maximality}

Let $f$ be a conformal Cauchy-embedding between two globally hyperbolic conformal spacetimes $M$ and $N$. 

\begin{proposition}[{\cite[Lemma 10]{Salvemini2013Maximal}}] \label{prop: boundary of the image of a Cauchy-embedding}
The boundary of $f(M)$ in $N$ is the union of two disjoint closed achronal egdeless sets (eventually empty) $\partial^+f(M)$ and $\partial^- f(M)$ such that
\begin{align*}
I^-(\partial^+ f(M)) \cap I^+(\partial^- f(M)) \subset f(M).
\end{align*}
\end{proposition}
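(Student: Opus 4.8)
The plan is to prove that the boundary $\partial f(M)$ splits into a "future" part and a "past" part relative to the image $f(M)$, and that these two pieces are achronal, edgeless, and causally separated in the stated sense. The essential structural input is that $f$ is a \emph{Cauchy}-embedding: since $M$ is GH, it possesses a Cauchy hypersurface $S$, and $f(S)$ is by hypothesis a Cauchy hypersurface of the GH spacetime $f(M)$ viewed with its intrinsic causal structure. The image $f(M)$ is the Cauchy development of $f(S)$ inside itself, so $f(S)$ is acausal and edgeless in $f(M)$, and $f(M) = D(f(S))$. The first step I would carry out is to define
\begin{align*}
\partial^+ f(M) &:= \partial f(M) \cap J^+(f(S)), \\
\partial^- f(M) &:= \partial f(M) \cap J^-(f(S)),
\end{align*}
where futures and pasts are taken in $N$. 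One then checks that every boundary point lies in exactly one of these sets; the disjointness should follow from the fact that a point lying in both $J^+(f(S))$ and $J^-(f(S))$ would sit in the intersection $I^-(q)\cap I^+(p)$ of chronological cones of points of $f(M)$, hence inside $f(M)$ itself by causal convexity of the development, contradicting that it is a boundary point.

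Next I would establish that each $\partial^\pm f(M)$ is closed and achronal. Closedness is immediate since both sets are intersections of the closed set $\partial f(M)$ with closed causal futures/pasts (one must be slightly careful here and work with the closed $J^\pm$). For achronality, suppose a timelike curve in $N$ met $\partial^+ f(M)$ twice; then between the two meeting points one could construct a timelike curve that enters and exits $f(M)$, and tracking this curve against the Cauchy hypersurface $f(S)$ — every inextensible causal curve of $f(M)$ meets $f(S)$ exactly once — would produce a causal curve re-entering the development, contradicting the causal convexity of $f(M)$ in $N$ or the uniqueness of the Cauchy intersection. The edgelessness is the most delicate point and should be argued pointwise: given $p \in \partial^+ f(M)$, one uses a strongly causal (causally convex) neighborhood $U$ of $p$ in $N$, verifies that $U \cap \partial^+ f(M)$ is achronal in $U$, and shows that any causal curve in $U$ running from $I^-(p,U)$ to $I^+(p,U)$ must cross the boundary, because such a curve transits from inside $f(M)$ to outside it (or vice versa) and the transition occurs precisely on $\partial^+ f(M)$.

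The final and, I expect, the hardest step is the containment
\begin{align*}
I^-(\partial^+ f(M)) \cap I^+(\partial^- f(M)) \subset f(M).
\end{align*}
Here I would argue by contradiction: take a point $z$ in the left-hand intersection lying outside $f(M)$. By definition there are $a \in \partial^+ f(M)$ with $z \in I^-(a)$ and $b \in \partial^- f(M)$ with $z \in I^+(b)$, so we obtain a timelike curve from $b$ through $z$ up to $a$. The strategy is to perturb $a$ and $b$ slightly into the interior $f(M)$ (possible since the boundary points are limits of points of $f(M)$, and the future/past sets are open), producing genuine points $a' \in f(M) \cap I^+(z)$ and $b' \in f(M) \cap I^-(z)$, whence $z \in I^-(a') \cap I^+(b')$. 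Since $f(M)$ is the Cauchy development of $f(S)$ and is therefore causally convex in $N$, any timelike curve joining two of its points stays inside it, forcing $z \in f(M)$ — the contradiction. The subtlety that will require care is justifying that the boundary points can be approached from the interior along timelike directions compatible with $z$, i.e. controlling the openness of the chronological relations near the boundary; this is where the achronal, edgeless structure established in the previous step, together with strong causality of $N$, does the real work.
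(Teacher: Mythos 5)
The paper itself offers no proof of this proposition: it is imported verbatim, with a citation to \cite[Lemma 10]{Salvemini2013Maximal}, so your attempt can only be measured against the standard argument, and your overall architecture does match it — split $\partial f(M)$ along the image of a Cauchy hypersurface, exploit causal convexity of $f(M)$ in $N$ (Rossi's Lemma 8, which this paper also quotes without proof, so treating it as known is fair), and push boundary points into $f(M)$ by openness of the chronological relation. Your final step is correct and in fact easier than you fear: for $z \notin f(M)$ with $b \ll z \ll a$, the open sets $I^+(z)$ and $I^-(z)$ are neighborhoods of $a$ and $b$, which lie in $\overline{f(M)} \setminus f(M)$, hence meet $f(M)$; causal convexity then forces $z \in f(M)$ — no achronal or edgeless structure is needed there, only that $f(M)$ is open. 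One misquote of the hypothesis must be repaired: a conformal Cauchy-embedding sends Cauchy hypersurfaces of $M$ to Cauchy hypersurfaces \emph{of $N$}, not merely of $f(M)$ with its intrinsic structure, as you assert. Both the coverage $\partial f(M) \subset J^+(f(S)) \cup J^-(f(S))$ and the closedness of these sets (via $J^\pm(f(S)) = N \setminus I^\mp(f(S))$, valid precisely because $f(S)$ is Cauchy in $N$) rely on the $N$-Cauchy property, so you must invoke the hypothesis at full strength; the intrinsic version you state is not enough for the steps you perform.

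The genuine gap is achronality. Your mechanism — a timelike curve between two points of $\partial^+ f(M)$ that ``enters and exits $f(M)$'' — cannot be made to work, because such a segment never meets $f(M)$ at all; the contradiction lives elsewhere. The correct route is to first prove the small lemma that $I^+(p) \cap f(M) = \emptyset$ for every $p \in \partial^+ f(M)$: since $f(S)$ lies in the open set $f(M)$, the boundary avoids $f(S)$, so $\partial^+ f(M) \subset I^+(f(S))$ and one may pick $s \in f(S)$ with $s \ll p$; if some $z \in I^+(p)$ belonged to $f(M)$, then $p$ would sit on a timelike curve joining $s$ to $z$, two points of $f(M)$, whence $p \in f(M)$ by causal convexity — absurd. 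Achronality is then immediate: a second point $q \in \partial^+ f(M) \cap I^+(p)$ would be a limit of points of $f(M)$ inside the open set $I^+(p)$, contradicting the lemma. The same lemma is what your edgelessness sketch silently needs: choosing the neighborhood $U$ inside the open set $I^+(f(S))$, every $a \in I^-(p,U)$ lies in $f(M)$ (approximate $p$ by $m \in f(M) \cap I^+(a)$ and apply causal convexity to $s' \ll a \ll m$ with $s' \in f(S)$), while $I^+(p,U)$ is disjoint from $f(M)$; a causal curve in $U$ from the one to the other therefore crosses $\partial f(M)$ at a point of $I^+(f(S))$, which forces that crossing to lie on $\partial^+ f(M)$ and not on $\partial^- f(M)$ (a point of $I^+(f(S)) \cap J^-(f(S))$ would violate achronality of the Cauchy hypersurface $f(S)$). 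Without this choice of $U$ and the lemma, your assertion that ``the transition occurs precisely on $\partial^+ f(M)$'' is unsupported.
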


There is subtle relation between the boundary of $f(M)$ in $N$ and the causal boundary of $M$:

\begin{proposition} \label{prop: Cauchy-embeddings and causal boundary}
Suppose $\partial^+ f(M)$ is non-empty. Then, for every point $p$ of $\partial^+ f(M)$, the pre-image under $f$ of the intersection of the chronological past of $p$ with $f(M)$ is a TIP of $M$.
\end{proposition}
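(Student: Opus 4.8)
The plan is to realise $P := f^{-1}\!\left(I^-(p) \cap f(M)\right)$ as the chronological past $I^-(\gamma)$ of a single future-inextendible causal curve $\gamma$ of $M$; Corollary~\ref{cor: examples of IPs} then makes $P$ an IP, and the absence of a future endpoint for $\gamma$ forces $P$ to be terminal, i.e.\ a TIP. I write $x \ll y$ for $x \in I^-(y)$. Two properties of the Cauchy-embedding $f$ are used repeatedly: as a conformal embedding it \emph{preserves} chronology (a timelike curve of $M$ maps to a timelike curve of $N$), and, since $f(M)$ is causally convex in $N$ (a standard feature of Cauchy-embeddings), it also \emph{reflects} it, so that $x \ll y$ in $M$ iff $f(x) \ll f(y)$ in $N$. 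From preservation alone one already gets that $P$ is open and is a past set: if $x \in P$ and $y \ll x$, then $f(y) \ll f(x) \ll p$, whence $f(y) \in I^-(p) \cap f(M)$ and $y \in P$.

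Next I would build $\gamma$ from the boundary structure supplied by Proposition~\ref{prop: boundary of the image of a Cauchy-embedding}. The point $p$ lies on the closed achronal edgeless hypersurface $\partial^+ f(M)$, which bounds $f(M)$ on its future side, so near $p$ the open set $f(M)$ occupies the past side of this hypersurface. Concretely, using edgelessness of $\partial^+ f(M)$ and global hyperbolicity of $N$, I would fix a small neighbourhood $U$ of $p$ with $I^-(p,U) \subset f(M)$, and a future-directed timelike curve $\delta\colon[0,1]\to U$ with $\delta(1)=p$ and $\delta([0,1)) \subset I^-(p,U) \subset f(M)$. Setting $\gamma := f^{-1}\circ \delta|_{[0,1)}$ gives a future-directed timelike curve of $M$; it is future-inextendible, since a future endpoint $x_\ast \in M$ would force $\delta(s)=f(\gamma(s)) \to f(x_\ast) \in f(M)$, contradicting $\delta(s) \to p \notin f(M)$.

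It then remains to check $P = I^-(\gamma)$. For $I^-(\gamma) \subset P$: if $x \ll \gamma(s)$ then $f(x) \ll \delta(s) \ll p$, so $f(x) \in I^-(p)\cap f(M)$ and $x \in P$. For $P \subset I^-(\gamma)$: given $x \in P$ we have $p \in I^+(f(x))$, and since $\delta(s) \to p$ with $I^+(f(x))$ open, $f(x) \ll \delta(s) = f(\gamma(s))$ for $s$ close to $1$; reflecting chronology through causal convexity yields $x \ll \gamma(s)$, hence $x \in I^-(\gamma)$. Thus $P = I^-(\gamma)$ is an IP by Corollary~\ref{cor: examples of IPs}. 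Were it a PIP, say $P = I^-(x_\ast)$, then every $\gamma(s) \ll x_\ast$, confining the future-inextendible $\gamma$ to the compact set $J^+(\gamma(0)) \cap J^-(x_\ast)$ and violating strong causality; so $P$ is a TIP, as claimed.

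The crux of the argument is the causal convexity of $f(M)$ in $N$: it is exactly what lets $f$ reflect chronology, and without it the inclusion $P \subset I^-(\gamma)$ --- the only non-formal step --- breaks down. The second point needing care is the one-sidedness of $\partial^+ f(M)$ at $p$, guaranteeing that a timelike curve reaching $p$ has its past tail inside $f(M)$; this is where the label $\partial^+$ (future boundary) is genuinely used. Both are features of Cauchy-embeddings established in the references, and once they are in hand the remainder is a routine transfer of causal relations across $f$.
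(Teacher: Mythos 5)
Your proof is correct, but it takes a genuinely different route from the paper's. The paper never produces a generating curve: it verifies the three defining properties of a TIP directly on $P = f^{-1}(I^-(p)\cap f(M))$ --- it is a past set (as you also show); it is indecomposable because the identity $I^-(I^-(p)\cap f(M)) = I^-(p)$ transports any decomposition $P = Q \cup R$ to a decomposition of the PIP $I^-(p)$ of $N$, whose indecomposability is then pulled back through $f$ using causal convexity of $f(M)$; and it is terminal because $P = I^-(q)$ would force $I^-(p) = I^-(f(q))$, hence $p = f(q)$ by the distinguishing property of $N$, contradicting $p \notin f(M)$. You instead realise $P$ as $I^-(\gamma)$ for an explicit future-inextendible timelike curve and invoke Corollary~\ref{cor: examples of IPs}, settling terminality by imprisonment of $\gamma$ in the compact set $J^+(\gamma(0)) \cap J^-(x_\ast)$ --- an argument the paper itself uses elsewhere (existence of maximal TIPs), so it is in the spirit of the text. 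The one step you justify too loosely is the existence of $U$ with $I^-(p,U) \subset f(M)$: edgelessness of $\partial^+ f(M)$ by itself does not exclude points of the exterior of $\overline{f(M)}$ from $I^-(p,U)$. The clean argument uses exactly the tools you already have on the table: since $f$ is a Cauchy-embedding, $S := f(S_M)$ is a Cauchy hypersurface of $N$ with $p \in I^+(S)$ (this is the content of the label $\partial^+$), and for $q \in I^-(p) \cap I^+(S)$ one picks $s \in S$ with $s \ll q$ and, since $p \in \overline{f(M)}$ and $I^+(q)$ is an open neighbourhood of $p$, some $x \in f(M) \cap I^+(q)$; causal convexity applied to a timelike curve through $s$, $q$, $x$ gives $q \in f(M)$, so any neighbourhood $U \subset I^+(S)$ of $p$ works. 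It is worth noting that the paper's own ``easy to see'' identity $I^-(I^-(p)\cap f(M)) = I^-(p)$ quietly requires the same accessibility of $p$ from the past within $f(M)$, so your proof makes explicit a point the published argument glosses over. The trade-off: your construction yields slightly more (an explicit timelike curve of $M$ generating the TIP, i.e.\ the Geroch--Kronheimer--Penrose normal form directly), while the paper's order-theoretic argument is shorter and avoids all local analysis near the boundary point.
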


There is a similar statement for $\partial^- f(M)$ with the reverse time-orientation.

\begin{proof}
Set $P = f^{-1}(I^-(p) \cap f(M))$. 
\begin{itemize}
\item \emph{$P$ is a past set, i.e. $P = I^-(P)$:} Since $P$ is open, $P \subset I^-(P)$. Conversely, let $x \in I^-(P)$. There exists $y \in P$ such that $x \in I^-(y)$. Then, since $f$ is conformal, $f(x) \in I^-(f(y))$. Moreover, since $y \in P$, we have $f(y) \in I^-(p)$. By transitivity, we get $f(x) \in I^-(p)$. Hence, $x \in P$. Thus, $I^-(P) \subset P$.
\item \emph{$P$ is indecomposable:} Suppose $P$ is the union of two distinct past sets $Q$ and $R$. Then, $f(P) = f(Q) \cup f(R)$, i.e. $I^-(p) \cap f(M) = f(Q) \cup f(R)$. Hence, $I^-(I^-(p) \cap f(M)) = I^-(f(Q) \cup f(R)) = I^-(f(Q)) \cup I^-(f(R))$. It is easy to see that $I^-(I^-(p) \cap f(M)) = I^-(p)$. Thus, $I^-(p) = I^-(f(Q)) \cup I^-(f(R))$. Since $I^-(p)$ is indecomposable, we deduce that $I^-(p)= I^-(f(Q))$ or $I^-(p) = I^-(f(R))$. Without loss of generality, we suppose that $I^-(p) = I^-(f(Q))$. Thus, $I^-(p) \cap f(M) = I^-(f(Q)) \cap f(M)$. Since $f(M)$ is causally convex, the intersection $I^-(f(Q)) \cap f(M)$ is exactly the chronological past of $f(Q)$ in $f(M)$. But $f(Q)$ is a past set in $f(M)$. Thus, $I^-(f(Q)) \cap f(M) = f(Q)$. Hence, $I^-(p) \cap f(M) = f(Q)$. It follows that $P = Q$. In other words, $P$ is indecomposable.
\item \emph{$P$ is terminal:} Suppose there exists $q \in M$ such that $P = I^-(q)$. Then, we have $I^-(p) \cap f(M) = f(I^-(q))$. Since $f(M)$ is causally convex in $N$ (see \cite[Lemma 8]{Salvemini2013Maximal}), $f(I^-(q)) = I^-(f(q)) \cap f(M)$. Hence, $I^-(p) \cap f(M) = I^-(f(q)) \cap f(M)$. Then, $I^-(I^-(p) \cap f(M)) = I^-(f(q) \cap f(M))$. Hence, $I^-(p) = I^-(f(q))$. Since $M$ is globally hyperbolic, it follows that $p = f(q)$. Contradiction.
\end{itemize}
\end{proof}

\begin{corollary} \label{cor: Cauchy embedding and causal boundary}
The map which associates to every point $p$ of $\partial^+ f(M)$ the TIP of $M$ defined by $f^{-1}(I^-(p) \cap f(M))$ is injective.
\end{corollary}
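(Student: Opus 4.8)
The plan is to deduce the corollary directly from Proposition \ref{prop: Cauchy-embeddings and causal boundary}, which already guarantees that the assignment $p \mapsto f^{-1}(I^-(p) \cap f(M))$ is well-defined (each image is genuinely a TIP of $M$). What remains is to prove injectivity of this map. So I would take two distinct points $p, p' \in \partial^+ f(M)$ and show that the associated TIPs $P = f^{-1}(I^-(p) \cap f(M))$ and $P' = f^{-1}(I^-(p') \cap f(M))$ are distinct. Since $f$ is a conformal embedding, hence injective, it suffices to show that the pasts $I^-(p) \cap f(M)$ and $I^-(p') \cap f(M)$ differ in $N$.

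The key idea is to apply $I^-(\cdot)$ and invoke the distinguishing property of $N$. Concretely, I would first observe that $I^-\bigl(I^-(p) \cap f(M)\bigr) = I^-(p)$; this is the same computation already used inside the proof of Proposition \ref{prop: Cauchy-embeddings and causal boundary}, and it holds because $f(M)$ is causally convex and because $p$ lies on the boundary $\partial^+ f(M)$, so every point of $I^-(p)$ can be pushed slightly into $f(M)$ while staying in the past of $p$. Consequently, if $P = P'$ then $I^-(p) \cap f(M) = I^-(p') \cap f(M)$, and applying $I^-(\cdot)$ to both sides yields $I^-(p) = I^-(p')$. Since $N$ is globally hyperbolic, it is past-distinguishing (see \cite[Remark 3.23]{minguzzi2008causal}), so $I^-(p) = I^-(p')$ forces $p = p'$, contradicting $p \neq p'$.

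The main obstacle, and the step deserving the most care, is justifying the identity $I^-(I^-(p) \cap f(M)) = I^-(p)$ when $p$ is a \emph{boundary} point of $f(M)$ rather than an interior point. The inclusion $\subset$ is immediate by transitivity of the chronological relation. For the reverse inclusion, given $r \in I^-(p)$, I need a point of $I^-(p) \cap f(M)$ lying in the chronological future of $r$; this uses that $\partial^+ f(M)$ is achronal and edgeless (Proposition \ref{prop: boundary of the image of a Cauchy-embedding}) together with the causal convexity of $f(M)$, so that a timelike curve from $r$ to $p$ enters $f(M)$ immediately below $p$. Once this identity is in hand, the corollary is a one-line application of past-distinguishability. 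Note that throughout I treat only $\partial^+ f(M)$; the statement for $\partial^- f(M)$ follows by the usual time-reversal symmetry already invoked after Proposition \ref{prop: Cauchy-embeddings and causal boundary}.
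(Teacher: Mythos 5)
Your proof is correct and takes essentially the same route as the paper's: from $P = P'$ you pass to $I^-(p) \cap f(M) = I^-(p') \cap f(M)$, apply $I^-(\cdot)$ together with the identity $I^-\bigl(I^-(p) \cap f(M)\bigr) = I^-(p)$, and conclude by past-distinguishability of the globally hyperbolic $N$. The only difference is that you spell out the justification of that identity, which the paper leaves implicit (it appears as ``easy to see'' within the proof of Proposition \ref{prop: Cauchy-embeddings and causal boundary}).
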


\begin{proof}
Let $p, q \in \partial^+ f(M)$ such that $f^{-1}(I^-(p) \cap f(M)) = f^{-1}(I^-(q) \cap f(M))$. Then, $I^-(p) \cap f(M) = I^-(q) \cap f(M)$. Thus, $I^-(I^-(p) \cap f(M)) = I^-(I^-(q) \cap f(M))$, i.e. $I^-(p) = I^-(q)$. Hence, $p = q$.
\end{proof}

A consequence of the description above is the following criterion of $\mathrm{C}$-maximality.

\begin{proposition}[Criterion of maximality] \label{criterion of maximality}
Let $M$ be a globally hyperbolic conformal spacetime. Suppose that $M$ admits a non-compact Cauchy hypersurface $S$. Then, if the intersection of $S$ with any TIP and any TIF of $M$ is non-compact, $M$ is $\mathrm{C}$-maximal.
\end{proposition}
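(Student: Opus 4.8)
The plan is to prove the contrapositive: assuming $M$ is not $\mathrm{C}$-maximal, I will exhibit a TIP (or TIF) of $M$ whose trace on $S$ is relatively compact, contradicting the hypothesis that every such trace is non-compact. So suppose there is a conformal Cauchy-embedding $f \colon M \to N$ into a globally hyperbolic conformal spacetime $N$ which is not surjective. Since $f$ is not surjective, $f(M)$ is a proper non-empty open subset of the connected spacetime $N$, so its boundary $\partial f(M)$ is non-empty. By Proposition \ref{prop: boundary of the image of a Cauchy-embedding}, this boundary splits as $\partial^+ f(M) \sqcup \partial^- f(M)$, hence at least one of the two pieces is non-empty; by time symmetry I may assume $\partial^+ f(M) \neq \emptyset$ and fix a point $p \in \partial^+ f(M)$. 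By Proposition \ref{prop: Cauchy-embeddings and causal boundary}, the set $P := f^{-1}(I^-(p) \cap f(M))$ is then a TIP of $M$.

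The heart of the argument is to show that $P \cap S$ is relatively compact in $S$. Since $f$ is injective and $f(S) \subset f(M)$, one computes $f(P \cap S) = f(P) \cap f(S) = (I^-(p) \cap f(M)) \cap f(S) = I^-(p) \cap f(S)$. Now $f$ is a Cauchy-embedding, so $f(S)$ is a Cauchy hypersurface of $N$, and $N$ is globally hyperbolic. By the standard compactness property of globally hyperbolic spacetimes (see \cite[Chap. 14]{oneill}), the intersection $J^-(p) \cap f(S)$ of the causal past of the single point $p \in N$ with the Cauchy hypersurface $f(S)$ is compact. As $I^-(p) \subset J^-(p)$, the set $f(P \cap S) = I^-(p) \cap f(S)$ is contained in this compact set and is therefore relatively compact in $f(S)$. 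Because the restriction $f|_S \colon S \to f(S)$ is a homeomorphism, it follows that $P \cap S$ is relatively compact in $S$, i.e. its closure in $S$ is compact.

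This contradicts the assumption that the intersection of $S$ with every TIP of $M$ is non-compact. If instead it is $\partial^- f(M)$ that is non-empty, the same reasoning carried out with the reversed time-orientation produces a TIF of $M$ with relatively compact trace on $S$, again contradicting the hypothesis; in either case $M$ must be $\mathrm{C}$-maximal. The step I expect to be the main obstacle is precisely this compactness transfer: the whole argument hinges on pushing the global-hyperbolicity compactness of $J^-(p) \cap f(S)$ from the extension $N$ back to $M$ through the homeomorphism $f|_S$, and on interpreting ``non-compact intersection with a TIP'' as the statement that the relevant trace fails to be relatively compact (equivalently, has non-compact closure) — this is exactly the dichotomy separating ideal points that can be realised as genuine interior points of a Cauchy-extension from those that lie genuinely at infinity.
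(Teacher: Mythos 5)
Your proof is correct and follows essentially the same route as the paper's: a non-surjective Cauchy-embedding yields a point $p \in \partial^+ f(M)$, Proposition \ref{prop: Cauchy-embeddings and causal boundary} converts it into the TIP $P = f^{-1}(I^-(p) \cap f(M))$, and compactness of $J^-(p) \cap f(S)$ in the globally hyperbolic extension $N$ contradicts the hypothesis on traces over $S$. If anything, your explicit passage through relative compactness of $I^-(p) \cap f(S)$ --- together with reading the hypothesis as ``the trace is not relatively compact'' (the correct reading, since the trace of a TIP is open in $S$ and hence never literally compact when nonempty and proper) --- is more careful than the paper's terse assertion that this set is compact.
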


\begin{proof}
Let $f$ be a conformal Cauchy embedding from $M$ to a globally hyperbolic spacetime $N$. Let $S$ be a Cauchy hypersurface of $M$. Suppose that $f$ is not surjective. Then $\partial^+ f(M)$ (or $\partial^- f(M)$) is non-empty. Let $p \in \partial f^+(M)$. By Proposition \ref{prop: Cauchy-embeddings and causal boundary}, $f^{-1}(I^-(p) \cap f(M))$ is a TIP of $M$. The intersection of this TIP with $S$ is not compact. But, its image, equal to $I^-(p) \cap f(S)$ is compact. Contradiction. The proposition follows.
\end{proof}

\subsection{$\mathcal{C}_0$-maximality implies $\mathrm{C}$-maximality} \label{sec: C_0-maximality implies C-maximality}

Let $M$ be a globally hyperbolic conformally flat spacetime. We suppose that $M$ is $\mathcal{C}_0$-maximal. In what follows, we prove that $M$ is $\mathrm{C}$-maximal.\\

By \cite[Corollary 6]{smai2023enveloping}, the universal cover of $M$, denoted $\tilde{M}$, is $\mathcal{C}_0$-maximal. If $\tilde{M}$ is Cauchy-compact, it is conformally equivalent to the universal cover of Einstein universe (see \cite[Theorem 9]{Salvemini2013Maximal}).

\begin{proposition}
The spacetime $\eeu$ is $\mathrm{C}$-maximal.
\end{proposition}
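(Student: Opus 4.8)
<br />

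The goal is to show that $\eeu$, the universal Einstein universe, is $\mathrm{C}$-maximal. The plan is to apply the criterion of maximality (Proposition \ref{criterion of maximality}). To do so, I must exhibit a non-compact Cauchy hypersurface $S$ and verify that its intersection with every TIP and every TIF is non-compact. However, there is an immediate subtlety: the criterion requires a \emph{non-compact} Cauchy hypersurface, whereas the standard Cauchy hypersurfaces of $\eeu = \mathbb{S}^{n-1} \times \R$ are the spheres $\mathbb{S}^{n-1} \times \{t\}$, which are \emph{compact}. So the criterion of Proposition \ref{criterion of maximality} does not apply directly, and I expect this to be the main obstacle.

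Recognizing this, my first step is to recall the description of the causal boundary of $\eeu$ given in the earlier Example (Universal Einstein universe): since every inextensible causal curve $(x(t),t)$ has $x(t)$ ranging over the whole sphere as $t \to \pm\infty$, every TIP and every TIF is equal to the entire spacetime $\eeu$. In particular, there are no proper TIPs or TIFs: the future and past causal boundaries are each reduced to a single point. This means $\eeu$ has an essentially trivial causal boundary, which strongly suggests maximality but forces me to argue directly rather than through the non-compact criterion.

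Therefore the cleaner approach is to argue via the boundary structure of a Cauchy-embedding directly (Propositions \ref{prop: boundary of the image of a Cauchy-embedding} and \ref{prop: Cauchy-embeddings and causal boundary}). Suppose $f \colon \eeu \to N$ is a conformal Cauchy-embedding into a GH conformal spacetime $N$, and suppose for contradiction that $f$ is not surjective, so that (say) $\partial^+ f(\eeu)$ is non-empty; pick $p \in \partial^+ f(\eeu)$. By Proposition \ref{prop: Cauchy-embeddings and causal boundary}, the set $P := f^{-1}(I^-(p) \cap f(\eeu))$ is a TIP of $\eeu$. But by the Example above, the only TIP of $\eeu$ is $\eeu$ itself, so $P = \eeu$, which forces $I^-(p) \cap f(\eeu) = f(\eeu)$, i.e. $f(\eeu) \subset I^-(p)$. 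The plan is then to derive a contradiction from this inclusion: since $f$ sends Cauchy hypersurfaces of $\eeu$ to Cauchy hypersurfaces of $N$, the image $f(S)$ of a sphere $S = \mathbb{S}^{n-1} \times \{t_0\}$ is a Cauchy hypersurface of $N$, and I would show that a point $p \in \partial^+ f(\eeu)$ lying to the future of the \emph{entire} image contradicts $f(S)$ being met by the inextensible causal curve through $p$ exactly once—or, more directly, contradicts the edgelessness and achronality of $\partial^+ f(\eeu)$ established in Proposition \ref{prop: boundary of the image of a Cauchy-embedding}, since $f(\eeu) \subset I^-(p)$ leaves no room for a past-inextensible causal curve from $p$ to re-enter $f(\eeu)$ and reach $f(S)$.

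The hard part will be handling the degenerate nature of the causal boundary cleanly: because the unique TIP is the whole space, the contradiction must come not from \emph{compactness} of a slice (as in the non-compact criterion) but from the global causal structure of $\eeu$, namely that no point of $N$ can lie in the common future of all of $f(\eeu)$ while $\partial^+ f(\eeu)$ remains a genuine achronal edgeless boundary hypersurface. I would make this precise by observing that $f(\eeu) \subset I^-(p)$ together with $f(\eeu)$ being causally convex and $\partial^+ f(\eeu)$ achronal forces $p$ to be a single point compressing an entire Cauchy hypersurface's worth of future endpoints, contradicting that $\partial^+f(\eeu)$ is a codimension-one edgeless achronal set homeomorphic to a Cauchy hypersurface of $\eeu$. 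Thus $\partial^+ f(\eeu)$ (and symmetrically $\partial^- f(\eeu)$) must be empty, so $f$ is surjective and $\eeu$ is $\mathrm{C}$-maximal.
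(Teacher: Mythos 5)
Your skeleton is the paper's: you correctly note that the criterion of Proposition \ref{criterion of maximality} is unusable here (the Cauchy hypersurfaces of $\eeu$ are the compact spheres $\mathbb{S}^{n-1}\times\{t\}$ -- this is indeed why the paper treats $\eeu$ separately), you reduce to the fact that the future causal boundary of $\eeu$ is a single point, and you aim at a contradiction with the structure of $\partial^+ f(\eeu)$ given by Proposition \ref{prop: boundary of the image of a Cauchy-embedding}. The paper closes this in two lines: by Corollary \ref{cor: Cauchy embedding and causal boundary}, the map $p \mapsto f^{-1}(I^-(p)\cap f(\eeu))$ is an \emph{injection} of $\partial^+ f(\eeu)$ into the future causal boundary, which is a single point; hence $\partial^+ f(\eeu)$ is a singleton, and a singleton is not edgeless, contradiction.

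Your closing step, however, has a genuine gap in two places. First, your candidate contradiction ``$f(\eeu)\subset I^-(p)$ leaves no room for a past-inextensible causal curve from $p$ to re-enter $f(\eeu)$ and reach $f(S)$'' fails: since $p\in\partial f(\eeu)$ and $f(\eeu)\subset I^-(p)$, past-directed causal curves from $p$ enter $f(\eeu)$ immediately and do meet $f(S)$; a single ideal point sitting above the whole image is not, by itself, in conflict with $f(S)$ being Cauchy (compare the point $\sigma(q)$ above the chart $\Mink_0(q)$ in $\eeu$ -- that inclusion is not a Cauchy-embedding, but it shows this causal configuration is not absurd on its own). Second, your ``precise'' version leans on $\partial^+f(\eeu)$ being ``homeomorphic to a Cauchy hypersurface of $\eeu$,'' which is established nowhere in the paper (Proposition \ref{prop: boundary of the image of a Cauchy-embedding} gives only closed, achronal, edgeless, possibly empty) and is not needed; moreover you never actually prove $\partial^+ f(\eeu)$ is a \emph{single} point -- your argument only yields $f(\eeu)\subset I^-(p)$ for each boundary point $p$. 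To collapse the boundary to one point you need the injectivity statement, i.e.\ Corollary \ref{cor: Cauchy embedding and causal boundary}: if $I^-(p)\cap f(\eeu)=I^-(q)\cap f(\eeu)=f(\eeu)$, then $I^-(p)=I^-(q)$, hence $p=q$ since $N$ is distinguishing. With that in hand, the correct contradiction is the one you half-name: a one-point set is achronal but never edgeless, because in any neighborhood $U$ of $p$ a causal curve can pass from $I^-(p,U)$ to $I^+(p,U)$ while avoiding $p$ (the dimension is at least $3$). Replacing both of your candidate contradictions by this injectivity-plus-edgelessness argument turns your proposal into the paper's proof.
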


\begin{proof}
Let $f$ be a conformal Cauchy-embedding from $\eeu$ in a globally hyperbolic spacetime $N$. Suppose that $f$ is not surjective. Then, $\partial^+ f(M)$ (or $\partial^- f(M)$) is non-empty. By Corollary \ref{cor: Cauchy embedding and causal boundary}, there is an injection from $\partial^+ f(M)$ to the future causal boundary of $\eeu$. This last one is reduced to a single point. Thus, $\partial^+ f(M)$ is equal to this point. But, $\partial^+ f(M)$ is edgeless (see Proposition \ref{prop: boundary of the image of a Cauchy-embedding}). Contradiction. Hence, $\eeu$ is $\mathrm{C}$-maximal.  
\end{proof}

Now, suppose that $\tilde{M}$ admits a non-compact Cauchy hypersurface. Let $E(\tilde{M})$ be an enveloping space of $\tilde{M}$; it fibers trivially over a conformally flat Riemannian manifold $\mathcal{B}$. The spacetime $\tilde{M}$ embeds conformally in $E(\tilde{M})$ as a causally convex open subset $\Omega$. Let $f^+, f^-$ two $1$-Lipschitz real-valued functions defined on an open subset $U$ of $\mathcal{B}$ such that
\begin{align*}
\Omega &= \{(x,t) \in U \times \R;\ f^-(x) < t < f^+(x)\}.
\end{align*} 
The $\mathcal{C}_0$-maximality of $\Omega$ is characterized by the following property of the graphs of $f^+$ and $f^-$ (see \cite[Prop. 18]{smai2023enveloping}):

\begin{fact}\label{fact: eikonal}
For every point $p$ in the graph of $f^+$, there exists a past-directed lightlike geodesic starting from $p$, entirely contained in the graph of $f^+$ and with no past endpoint in the graph of $f^+$.
\end{fact}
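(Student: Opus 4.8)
The plan is to deduce the asserted ruling of the graph of $f^+$ from its interpretation as a Cauchy horizon inside the enveloping space. Since $\tilde{M}$ is $\mathcal{C}_0$-maximal, the enveloping-space construction recalled in Section~\ref{sec: conformally flat spacetimes} identifies $\Omega$ with the Cauchy development $D(S)$ of a Cauchy hypersurface $S$ in $E(\tilde{M})$. Consequently the future boundary of $\Omega$ in $E(\tilde{M})$, which is exactly the graph of $f^+$ by \cite[Prop. 10]{smai2023enveloping}, coincides with the future Cauchy horizon $H^+(S)$. The Fact thus reduces to the statement that $H^+(S)$ is generated by past-directed lightlike geodesics that stay in $H^+(S)$ and have no past endpoint in the interior part of the horizon, namely the graph of $f^+$ over $U$.

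First I would fix the local model. The enveloping space $E(\tilde{M})$ is developable, hence strongly causal (see \cite[Lemma 8]{smai2023enveloping}), and is locally conformally modelled on $\eeu = \mathbb{S}^{n-1} \times \R$ in a way compatible with the fibration $E(\tilde{M}) = \mathcal{B} \times \R$. Under the developing map a past-directed lightlike geodesic through $p = (x_0, f^+(x_0))$ is, up to reparametrization, a curve $s \mapsto (\gamma(s), f^+(x_0) - s)$ with $\gamma$ a unit-speed geodesic of $\mathcal{B}$ issued from $x_0$, and such a curve lies on the graph precisely when it saturates the $1$-Lipschitz bound $f^+(\gamma(s)) = f^+(x_0) - s$. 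Because a lightlike geodesic, viewed as an unparametrized curve, is a conformal invariant, the null generators furnished by horizon theory are lightlike geodesics in exactly this sense; this is what transfers the classical Lorentzian statement to the present conformal setting.

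The core step is the generator theorem for Cauchy horizons (see e.g.\ \cite[Chap. 14]{oneill}): through every point $p$ of $H^+(S)$ passes a past-directed null geodesic contained in $H^+(S)$, and such a generator, prolonged into the past, is either past-inextensible in $E(\tilde{M})$ or terminates on the edge of $S$; in neither case does it acquire a past endpoint lying in $H^+(S)$ itself. Since the graph of $f^+$ is the portion of $H^+(S)$ sitting over the open set $U$ — the edge of $S$ projecting to $\partial U$, where $f^+$ and $f^-$ coincide — a generator through $p$ yields precisely a past-directed lightlike geodesic starting at $p$, entirely contained in the graph of $f^+$, with no past endpoint in the graph of $f^+$. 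This is the assertion of the Fact.

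I expect the main difficulty to lie in the rigorous application of the horizon generator theorem in the enveloping space. Unlike the classical setting, $E(\tilde{M})$ need not be globally hyperbolic, so I would carry out the limiting construction of the generators locally, inside affine charts $\Mink_0(p)$ where the explicit description of lightcones and the achronality of the graph of $f^+$ established above make the required compactness arguments effective, with the strong causality of $E(\tilde{M})$ preventing the limit geodesic from degenerating. The second delicate point is the bookkeeping at $\partial U$: one must verify that a generator can only acquire a past endpoint on the edge of $S$ and never at an interior point of the graph. This is exactly where the maximality hypothesis intervenes, through the identification $\Omega = D(S)$, since for a non-maximal $\Omega$ the future boundary may fail to be null-ruled at all.
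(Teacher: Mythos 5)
Your proof is essentially correct, but note first that the paper does not actually prove this Fact: it is quoted from the companion paper (\cite[Prop.~18]{smai2023enveloping}), where the eikonal property of $f^\pm$ is established as the characterization of $\mathcal{C}_0$-maximality, so the comparison is really with that external proposition. Measured against what this paper does record, your route is the natural one and it works: the paper states that the $\mathcal{C}_0$-maximal extension of $M$ is the Cauchy development of a Cauchy hypersurface of $M$ in $E(M)$, so $\mathcal{C}_0$-maximality of $\tilde{M}$ does give $\Omega = D(S)$ in $E(\tilde{M})$, the null ruling of the future boundary then follows from the classical generator theorem for Cauchy horizons, and you correctly identify both where maximality enters (without it the boundary need not be a horizon, hence need not be null-ruled) and why conformal invariance of unparametrized lightlike geodesics transfers the metric statement to the conformal class. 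Two details deserve tightening, though neither is the one you flag. First, the identification $\mathrm{graph}(f^+) = H^+(\bar{S}) \setminus \mathrm{edge}(\bar{S})$ requires passing to the closure $\bar{S}$: achronality of $\bar{S}$ in $E(\tilde{M})$ follows from causal convexity of $\Omega$, and $\mathrm{edge}(\bar{S})$ is exactly the graph over $\partial U$ because $\bar{S}$ is pinched between the extensions of $f^-$ and $f^+$, which coincide there; with this in hand, $\partial D(\bar{S}) = H^+(\bar{S}) \cup H^-(\bar{S})$ places the generator through a point of $\mathrm{graph}(f^+)$ inside the graph until it either becomes past-inextendible (hence has no past endpoint at all, by definition of inextendibility) or terminates over $\partial U$ (hence not in the graph) --- which is precisely the Fact. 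Second, your anticipated main difficulty is a non-issue: the generator theorem (see e.g.\ \cite[Chap.~14]{oneill}) holds for any closed achronal set in an arbitrary spacetime, with no global hyperbolicity of the ambient space required, so the local limiting construction inside affine charts that you sketch can be replaced by a direct citation; strong causality of $E(\tilde{M})$ is not even needed for the dichotomy itself.
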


\begin{proposition} \label{prop: C_0-maximality implies C-maximality simply connected case}
The causally convex open subset $\Omega$ of $E(\tilde{M})$ is $\mathrm{C}$-maximal.
\end{proposition}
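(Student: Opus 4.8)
The plan is to verify the hypotheses of the Criterion of maximality (Proposition \ref{criterion of maximality}) applied to $\Omega$, reducing $\mathrm{C}$-maximality to two assertions: that $\Omega$ admits a non-compact Cauchy hypersurface, and that every TIP and every TIF of $\Omega$ meets such a Cauchy hypersurface in a non-compact set. First I would fix a Cauchy hypersurface $S$ of $\Omega$; since we are in the regime where $\tilde M$ (hence $\Omega$) admits a non-compact Cauchy hypersurface, $S$ is non-compact and I would realize it concretely inside $E(\tilde M) = \mathcal{B} \times \R$ as a graph $\{(x, h(x)) : x \in U\}$ of a $1$-Lipschitz function $h$ with $f^- < h < f^+$ on $U$. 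The identification of TIPs provided by Proposition \ref{prop: causal boundary general conf. flat setting} is the crucial input: every TIP of $\Omega$ is of the form $I^-(p) \cap \Omega$ for a unique point $p$ in the graph of $f^+$, and dually every TIF is $I^+(p) \cap \Omega$ for a point in the graph of $f^-$.

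The heart of the argument is then to show that $I^-(p) \cap \Omega \cap S$ is non-compact for each $p$ in the graph of $f^+$. This is where Fact \ref{fact: eikonal} enters, and I expect it to be the main obstacle. By Fact \ref{fact: eikonal}, there is a past-directed lightlike geodesic $\ell$ issuing from $p$, contained in the graph of $f^+$ and with no past endpoint in that graph. Writing $\ell$ in the standard parametrization of causal curves in the enveloping space as $(x(s), f^+(x(s)))$ with $x(s)$ a geodesic of $\mathcal{B}$, the absence of a past endpoint forces $x(s)$ to be defined for all parameter values (equivalently to approach $\partial U$ only in the limit). The idea is that $I^-(p)$ contains an open neighborhood of this entire ray $\ell$ minus its endpoint $p$: every point strictly to the chronological past of an interior point of $\ell$ lies in $I^-(p)$ by transitivity. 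I would then project this chronological past region down onto $S$ and argue that it yields an unbounded subset of $S$, because the spatial projection $x(s)$ of $\ell$ runs off to the boundary of $U$ (or to infinity in $\mathcal{B}$) without converging in $\mathcal{B}$, and $S$ must separate these escaping points from $p$. Making precise that this projection is non-compact is the delicate step: I would want to show that any compact subset $K \subset S$ has image in $\mathcal{B}$ contained in a compact $K' \subset U$, whereas the trace of $\ell$ exits every such $K'$, so $I^-(p) \cap S$ cannot be contained in $K$.

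Concretely, I would argue by contradiction: suppose $I^-(p) \cap \Omega \cap S$ were relatively compact in $S$. Since $S$ is a Cauchy hypersurface and the lightlike ray $\ell$ has no past endpoint in the graph of $f^+$, I would produce along $\ell$ an inextensible causal curve in $\Omega$ whose chronological past is contained in the TIP $I^-(p) \cap \Omega$ but which crosses $S$ arbitrarily far out; a sequence of such crossing points accumulating only at $\partial U$ contradicts the relative compactness. The dual statement for TIFs and the graph of $f^-$ follows by reversing the time-orientation, using the analogue of Fact \ref{fact: eikonal} for $f^-$. Once both non-compactness statements are established, Proposition \ref{criterion of maximality} applies verbatim and yields that $\Omega$ is $\mathrm{C}$-maximal, completing the proof.

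\begin{proof}
We apply the criterion of maximality (Proposition \ref{criterion of maximality}) to $\Omega$. Since $\tilde{M}$ admits a non-compact Cauchy hypersurface, so does $\Omega$; fix such a Cauchy hypersurface $S$, realized in $E(\tilde{M}) = \mathcal{B} \times \R$ as the graph of a $1$-Lipschitz function $h: U \to \R$ with $f^- < h < f^+$. By Proposition \ref{prop: causal boundary general conf. flat setting}, every TIP of $\Omega$ is of the form $I^-(p) \cap \Omega$ for a unique point $p$ in the graph of $f^+$, and dually every TIF is of the form $I^+(p) \cap \Omega$ for a point $p$ in the graph of $f^-$. It suffices to prove that $I^-(p) \cap \Omega \cap S$ is non-compact for every $p$ in the graph of $f^+$; the statement for TIFs follows by reversing the time-orientation.

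Fix $p = (x_0, f^+(x_0))$ in the graph of $f^+$. By Fact \ref{fact: eikonal}, there exists a past-directed lightlike geodesic $\ell$ issuing from $p$, entirely contained in the graph of $f^+$, with no past endpoint in the graph of $f^+$. In the standard parametrization of causal curves of the enveloping space, $\ell$ is the curve $s \mapsto (x(s), f^+(x(s)))$ where $x: [0, \omega) \to \mathcal{B}$ is a geodesic of $\mathcal{B}$ with $x(0) = x_0$. The absence of a past endpoint in the graph of $f^+$ means that $x(s)$ admits no limit in $U$ as $s \to \omega$.

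We claim $I^-(p) \cap \Omega \cap S$ is non-compact. Suppose not, i.e. that it is relatively compact in $S$, hence contained in the graph over some compact set $K' \subset U$. For each $s \in (0, \omega)$, the point $\ell(s)$ lies strictly in the chronological past of $p$ relative to the graph of $f^+$: indeed $\ell$ is past-directed lightlike, so any interior point of $\ell$ can be pushed slightly into $I^-(p)$. Consider the past-directed timelike curve obtained by lowering $\ell(s)$ by a small amount in the $\R$-direction; it lies in $I^-(p) \cap \Omega$ and, since $S$ is a Cauchy hypersurface of $\Omega$, each such inextensible timelike curve through a point of $I^-(p) \cap \Omega$ meets $S$ exactly once, in a point whose $\mathcal{B}$-coordinate is close to $x(s)$. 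As $s \to \omega$, the coordinates $x(s)$ escape every compact subset of $U$, because $x(s)$ has no limit in $U$. Hence the corresponding crossing points of $S$ escape $K'$, contradicting $I^-(p) \cap \Omega \cap S \subset \mathcal{B}\text{-graph over } K'$. Therefore $I^-(p) \cap \Omega \cap S$ is non-compact.

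By the dual argument applied to the graph of $f^-$, the intersection of $S$ with any TIF of $\Omega$ is non-compact as well. The hypotheses of Proposition \ref{criterion of maximality} are satisfied, and we conclude that $\Omega$ is $\mathrm{C}$-maximal.
\end{proof}
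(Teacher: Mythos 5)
Your proposal is correct and follows essentially the same route as the paper: apply the criterion of maximality (Proposition \ref{criterion of maximality}), identify each TIP with a point $p$ of the graph of $f^+$ via Proposition \ref{prop: causal boundary general conf. flat setting}, and use Fact \ref{fact: eikonal} to produce a past-directed lightlike geodesic in the graph forcing $P \cap S$ to be non-compact, with the dual argument for TIFs. The only difference is that you make explicit the step the paper leaves implicit (pushing points of the geodesic down onto $S$ along the timelike fibers and showing the spatial projection escapes every compact of $U$), which is a welcome elaboration rather than a divergence.
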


\begin{proof}
We use the criterion given by Proposition \ref{criterion of maximality}. Let $S$ be a Cauchy hypersurface of $\Omega$. Let $P$ be a TIP of $\Omega$. By Proposition \ref{prop: causal boundary general conf. flat setting}, there exists a unique point $p$ in the graph of $f^+$ such that $P = I^-(p) \cap \Omega$. By Fact \ref{fact: eikonal}, there exists a past-directed lightlike geodesic starting from $p$, entirely contained in the graph of $f^+$. Therefore, $P \cap S$ is not compact. Similarly, the intersection of any TIF of $\Omega$ with $S$ is not compact. Thus, $M$ is $\mathrm{C}$-maximal.
\end{proof}

\begin{proof}[Proof of Theorem \ref{intro: C_0-maximality implies C-maximality}]
Let $f$ be a conformal Cauchy-embedding from $M$ to a globally hyperbolic spacetime $N$. It is easy to see that the lift $\tilde{f}$ of $f$ is a Cauchy-embedding from $\tilde{M}$ to $\tilde{N}$. It follows from Proposition \ref{prop: C_0-maximality implies C-maximality simply connected case} that $\tilde{f}$ is surjective. Thus, $f$ is surjective. In other words, $M$ is $\mathrm{C}$-maximal.
\end{proof}

From now on, by Theorem \ref{intro: C_0-maximality implies C-maximality}, we simply say that a conformally flat spacetime is \emph{maximal} (while keeping in mind that it is for the ordering relation defined by \emph{conformal} Cauchy-embeddings).

\section{Complete photons} \label{sec: examples}

In \cite{Salvemini2013Maximal}, Rossi proved that any developable GHM conformally flat spacetime containing conjugate points is conformaly equivalent to $\eeu$. As a consequence, any GHM conformally flat spacetime whose universal cover admits conjugate points is a finite quotient of $\eeu$. 

In this section, we introduce three remarkable families of GHM conformally flat spacetimes obtained as the quotient of a causally convex open subset $\Omega \subsetneq \eeu$ by a discrete group of conformal transformations of $\eeu$. Rossi's result insures that the domains $\Omega$ do not contain conjugate points. Nevertheless, we prove that they satisfy the remarkable property of containing \emph{complete} photons:

\begin{definition} \label{def: complete photons}
Let $M$ be a developable conformally flat spacetime. A photon of $M$ is said \textbf{complete} if it develops on a segment of photon of $\eeu$ connecting two conjugate points.
\end{definition}

The GHCM conformally flat spacetimes that we describe typically arise from the data of some appropriate \mbox{\emph{$P_1$-Anosov representation}} of a Gromov hyperbolic group into $O(2,n)$.

\subsection{Anosov representations and GHCM conformally flat spacetimes} 

We recall here the notion of $P_1$-Anosov representation in $O_0(2,n)$ and how they produce examples of GHCM conformally flat spacetimes.

Let $\Gamma$ be a Gromov hyperbolic group and let $\rho$ be a representation of $\Gamma$ in $O_0(2,n)$. The notion of $P_1$-Anosov representation involves two dynamics. On the one hand, the north-south dynamic on the Gromov boundary of $\Gamma$, denoted $\partial_{\infty} \Gamma$; and on the other hand, the dynamic on $Ein_{1,n-1}$ under the action of some sequences of $O(2,n)$, namely \emph{$P_1$-divergent sequences}. A sequence $\{g_i\}$ of $O(2,n)$ is said \emph{$P_1$-divergent} if it is divergent - i.e. leaves every compact set - and if there exist a subsequence $\{g_{j_i}\}$, an attracting point $p_+$ and a repulsing point $p_-$ in $Ein_{1,n-1}$ such that $\{g_{i_j}\}$ converges uniformly to $p_+$ on every compact set disjoint from the lightcone of $p_-$.

\begin{definition} 
The representation $\rho: \Gamma \to O_0(2,n)$ is said $P_1$-Anosov if
\begin{enumerate}
\item all sequences of $\rho(\Gamma)$ are $P_1$-divergent;
\item there exists a continuous $\rho$-equivariant map $\xi: \partial_{\infty} \Gamma \to Ein_{1,n-1}$ which is
\begin{enumerate}
\item \emph{transverse} meaning that for every pair $(\eta, \eta')$ of distinct points of $\partial_{\infty} \Gamma$, the points $\xi(\eta)$ and $\xi(\eta')$ are not connected by a lightlike geodesic;
\item \emph{dynamics-preserving} meaning that if $\eta \in \partial_{\infty} \Gamma$ is an attracting point of $\gamma \in \Gamma$, then $\xi(\eta) \in Ein_{1,n-1}$ is an attracting point of $\rho(\gamma)$.
\end{enumerate}
\end{enumerate} 
\end{definition}

Any $P_1$-Anosov representation preserves a specific closed subset of $Ein_{1,n-1}$ called \emph{limit~set}: 
\begin{itemize}
\item \emph{The limit set of $\rho$}, denoted $\Lambda_{\rho}$, is the set of all attracting points of the elements of~$\rho(\Gamma)$. It coincides with the image of the boundary map $\xi: \partial_{\infty} \Gamma \to Ein_{1,n-1}$.
\item We say that the representation $\rho$ is \emph{negative} if the limit set $\Lambda_{\rho}$ lifts to an \emph{acausal} subset of $\eeu$.
\end{itemize}

In a previous paper, we proved the following result.

\begin{theorem}[{\cite[Theorem 5.1]{smai2022anosov}}]
Any $P_1$-Anosov representation $\rho$ of a Gromov hyperbolic group $\Gamma$ in $O_0(2,n)$ with \emph{negative} limit set which is not a topological $(n-1)$-sphere, is the holonomy of a GHCM conformally flat spacetime $M_{\rho}$ of dimension $n$.
\end{theorem}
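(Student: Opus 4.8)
The plan is to realize $M_\rho$ as the quotient of an explicit \emph{invisible domain} in $\eeu$ by $\rho(\Gamma)$, imitating the regular-domain construction of Section~\ref{sec: Einstein universe}. First I would use the negativity hypothesis to lift both the limit set and the action to $\eeu$: since $\Lambda_\rho \subset Ein_{1,n-1}$ is the continuous image of the compact boundary $\partial_\infty\Gamma$, it is compact, and negativity says precisely that it lifts to a compact \emph{acausal} subset $\hat\Lambda \subset \eeu$ preserved by a lift $\hat\Gamma \subset \Conf(\eeu)$ of $\rho(\Gamma)$. To $\hat\Lambda$ I associate the domain
\[
\Omega := \{\, x \in \eeu : x \text{ is causally related to no point of } \hat\Lambda \,\},
\]
the global analogue of the domains $\Omega^\pm(\Lambda)$. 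Translating "non-causally related" into a sign condition on $\langle\cdot,\cdot\rangle_{2,n}$ via Proposition~\ref{prop: achronality in Ein}, one checks directly that $\Omega$ is open, causally convex, and $\hat\Gamma$-invariant (because $\hat\Gamma$ preserves $\hat\Lambda$). The hypothesis that $\Lambda_\rho$ is \emph{not} a topological $(n-1)$-sphere enters exactly here: an acausal topological $(n-1)$-sphere is the graph of a $1$-Lipschitz function on $\mathbb{S}^{n-1}$, hence a Cauchy hypersurface of $\eeu$, and then every point of $\eeu$ is causally related to $\hat\Lambda$, forcing $\Omega = \emptyset$; excluding this case guarantees $\Omega \neq \emptyset$.

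The second step is to prove that $\Omega$ is globally hyperbolic. Since $\hat\Lambda$ is acausal and compact, I would build an achronal hypersurface spanning $\Omega$ out of the complement of $\hat\Lambda$ in a Cauchy hypersurface of $\eeu$, and show it is met exactly once by every inextensible causal curve of $\Omega$. This uses the explicit causal description of $\eeu$ (causal curves are $1$-Lipschitz graphs over $\mathbb{S}^{n-1}$) together with the observation that a causal curve leaving $\Omega$ must cross the lightcone of a point of $\hat\Lambda$. Geroch's theorem then yields that $\Omega$ is GH, with a Cauchy hypersurface $S$ diffeomorphic to the relevant component of $\mathbb{S}^{n-1}\setminus\hat\Lambda$.

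The heart of the argument, and the step I expect to be hardest, is to show that $\hat\Gamma$ acts \emph{freely and properly discontinuously} on $\Omega$; this is precisely where the Anosov hypothesis is indispensable. The idea is that failure of properness can only occur along the limit set. Given a divergent sequence $\{\rho(\gamma_i)\}$, $P_1$-divergence produces attracting and repelling points $p_+, p_- \in \Lambda_\rho$ such that $\rho(\gamma_i)$ pushes every compact set disjoint from the lightcone of $p_-$ uniformly towards $p_+$. By construction $\Omega$ is disjoint from the lightcones of all points of $\hat\Lambda$, in particular of $p_-$, while $p_+$ lies on $\partial\Omega$ and not in $\Omega$; transversality of the boundary map guarantees $p_+ \neq p_-$, so the dynamics are genuinely north–south. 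Hence any compact subset of $\Omega$ is eventually displaced off itself. Making this uniform over compact sets, and deducing freeness of the action, is the technical core of the proof.

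Finally, I would assemble the pieces. The quotient $M_\rho := \Omega/\hat\Gamma$ is then a conformally flat spacetime, globally hyperbolic with Cauchy hypersurface $S/\hat\Gamma$, whose holonomy is $\rho$ since the developing map is the inclusion $\Omega \hookrightarrow \eeu$. For Cauchy-maximality, I would argue that $\Omega$ is the \emph{maximal} invisible domain attached to $\hat\Lambda$: by Proposition~\ref{prop: causal boundary general conf. flat setting} its future and past causal boundaries are identified with the graphs bounding $\Omega$, and these are entirely governed by $\hat\Lambda$, so there is no room for a proper conformal Cauchy-extension within the conformally flat category. Equivalently, by Theorem~\ref{intro: C_0-maximality implies C-maximality} it suffices to establish $\mathcal{C}_0$-maximality, which follows once $\Omega$ is recognized as the Cauchy development of $S$ in its enveloping space. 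This shows that $M_\rho$ is GHCM, completing the construction.
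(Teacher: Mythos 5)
Your overall route coincides with the paper's construction: you form the invisible domain $\Omega(\Lambda_\rho)$ dual to the limit set, use negativity to lift it injectively to $\eeu$, use the non-sphere hypothesis to rule out $\Omega=\emptyset$ (an acausal topological $(n-1)$-sphere is the graph of a $1$-Lipschitz function on all of $\mathbb{S}^{n-1}$, hence a Cauchy hypersurface of $\eeu$), and get proper discontinuity from $P_1$-divergence: a compact $C\subset\Omega$ avoids the lightcone of the repelling pole $p_-\in\Lambda_\rho$, so along the relevant subsequence $\rho(\gamma_i)C$ accumulates on $p_+\notin\Omega$. This is exactly the construction sketched in the paper after the theorem, with the quotient $M_\rho=\Omega(\Lambda_\rho)\backslash\rho(\Gamma)$.

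There is, however, a genuine gap: in this paper (following Barbot's terminology) GHCM means globally hyperbolic \emph{Cauchy-compact} maximal, and you never prove Cauchy-compactness of $M_\rho$. You exhibit a Cauchy hypersurface $S$ of $\Omega$ (a graph over the complement in $\mathbb{S}^{n-1}$ of the projection of $\hat\Lambda$, not over a single component) and pass to $S/\hat\Gamma$, but compactness of this quotient does not follow from proper discontinuity; it is a cocompactness statement for the Anosov action on the domain of discontinuity, in the spirit of Guichard--Wienhard, and constitutes a separate, essential use of the Anosov hypothesis --- visible already in the Misner example, where compactness of the quotient comes from cocompactness of $\rho_\ell$ acting on $\hy^\ell$. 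Two smaller issues: freeness does not follow from properness (a finite-order element of $\rho(\Gamma)$ could fix a point of $\Omega$), so the freeness you defer needs its own argument; and your maximality paragraph is nearly circular, since ``there is no room for a proper Cauchy-extension'' is the assertion to be proved. The paper's machinery gives the precise route you only gesture at: verify the eikonal property of Fact \ref{fact: eikonal} for the graphs $f^\pm$ bounding the invisible domain --- from a point $(x,f^+(x))$ the past-directed lightlike ray toward a point of $\hat\Lambda$ realizing $f^+(x)=\min_{y}\bigl(f(y)+d(x,y)\bigr)$ stays in the graph and has no past endpoint in it --- which yields $\mathcal{C}_0$-maximality, and then apply Theorem \ref{intro: C_0-maximality implies C-maximality}. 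Finally, a minor point: the developing map of $M_\rho$ is not literally the inclusion $\Omega\hookrightarrow\eeu$, since $\Omega$ need not be simply connected; it is that inclusion precomposed with the universal covering of $M_\rho$ factored through $\Omega$.
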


The spacetime $M_{\rho}$ is constructed as follow. We define \emph{the invisible domain} $\Omega(\Lambda_{\rho})$ of the limit set $\Lambda_{\rho}$:
\begin{align*}
\Omega(\Lambda_{\rho}) &:= \{\xi \in Ein_{1,n-1}:\ <\xi, \xi_0>_{2,n} < 0\ \forall \xi_0 \in \Lambda_{\rho}\}.
\end{align*}
In the terminology of \cite[Def. 9]{smai2023enveloping}, $\Omega(\Lambda_{\rho})$ is \emph{the dual} of $\Lambda_{\rho}$. It corresponds to the set of points of $Ein_{1,n-1}$ which are not causally related to any point of the limit set in the following sense: 

Let $\tilde{\Lambda}_{\rho}$ be a lift of $\Lambda_{\rho}$ in $\eeu$. We call $\Omega(\tilde{\Lambda}_{\rho})$ the set of points of $\eeu$ which are not causally related to any point of $\tilde{\Lambda}_{\rho}$:
\begin{align*}
\Omega(\tilde{\Lambda}_{\rho}) &= \eeu \backslash (J^+(\Lambda_{\rho}) \cup J^-(\Lambda_{\rho})). 
\end{align*}
The restriction of the projection $\pi: \eeu \to Ein_{1,n-1}$ to $\Omega(\tilde{\Lambda}_{\rho})$ is injective and its image is exactly $\Omega(\Lambda_{\rho})$ (see \cite[Lemmas 5.11 \& 5.12]{smai2022anosov}).

We prove that the action of $\rho(\Gamma)$ on $\Omega(\Lambda)$ is free and properly discontinuous (see \cite[Sec. 5.2]{smai2022anosov}). The spacetime $M_{\rho}$ is then the quotient of $\Omega(\Lambda_{\rho})$ by $\rho(\Gamma)$ (see \cite[Sec. 5.3]{smai2022anosov}).

\subsection{GHCM conformally flat spacetimes with complete photons}

Throughout this section, $\Lambda$ denotes a closed negative subset of $Ein_{1,n-1}$ - which typically arises as the limit set of some $P_1$-Anosov representation in $O(2,n)$. We describe the invisible domain $\Omega(\Lambda)$ in the following cases:
\begin{enumerate}
\item$\Lambda$ is contained in a Penrose boundary $\mathcal{J}^+(\mathrm{x})$ where $\mathrm{x} \in Ein_{1,n-1}$, defining \emph{black-white holes};
\item $\Lambda$ is a conformal sphere of dimension $0 \leq k \leq n-3$, defining \emph{Misner domains of Einstein universe};
\item $\Lambda$ is strictly contained in a conformal sphere of dimension $0 \leq k \leq n-3$, defining \emph{Misner extensions}.
\end{enumerate}

\subsubsection{Black-white holes} \label{sec: black holes}

Let $\mathrm{x} \in Ein_{1,n-1}$. Suppose that $\Lambda$ is contained in $\mathcal{J}^+(\mathrm{x})$ and that it is not a topological $(n-2)$-sphere (i.e. it is not a section of $\mathcal{J}^+(\mathrm{x})$).

\paragraph{Horizons.} The intersection of $\Omega(\Lambda)$ with $\mathcal{J}^+(\mathrm{x})$ is the union of the lightlike geodesics with extremities $\mathrm{x}$ and $-\mathrm{x}$ disjoint from $\Lambda$. Every connected component of this union is called \textbf{a horizon}. The assumption that $\Lambda$ is not a topological $(n-2)$-sphere insures the existence of horizons. By definition, each horizon is foliated by complete photons.

\paragraph{Black hole, white hole.} $\mathcal{J}^+(\mathrm{x})$ is the future Penrose boundary of the affine chart $M(\mathrm{x})$ and the past Penrose boundary of $M(-\mathrm{x})$. Hence,
\begin{enumerate}
\item the intersection of $\Omega(\Lambda)$ with $M(\mathrm{x})$ is a future-regular domain of $M(\mathrm{x})$, denoted $\Omega^+(\Lambda)$;
\item the intersection of $\Omega(\Lambda)$ with $M(-\mathrm{x})$ is a past-regular domain of $M(-\mathrm{x})$, denoted $\Omega^-(\Lambda)$.
\end{enumerate}

The domain $\Omega^-(\Lambda)$ satisfies the property that no photon of $\Omega(\Lambda)$ going through a point of $\Omega^-(\Lambda)$ can escape from $\Omega^-(\Lambda)$ in the future, it can only escape through one of the horizons in the past. The domain $\Omega^+(\Lambda)$ satisfies a similar property for the reverse time-orientation. For this reason, $\Omega^-(\Lambda)$ is called \emph{a black hole} and $\Omega^+(\Lambda)$ \emph{a white-hole}. 

\begin{figure}[h!]
\centering
\includegraphics[scale=0.9]{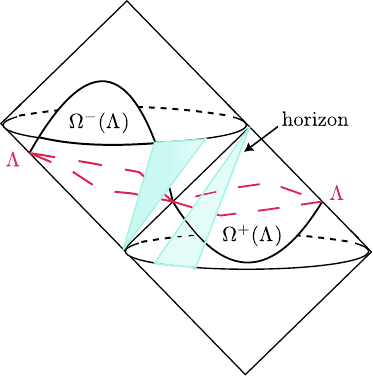}
\caption{Black-white hole domain of $Ein_{1,2}$.}
\end{figure}

\begin{definition}
The invisible domain $\Omega(\Lambda)$ is called \emph{a black-white hole domain of $Ein_{1,n-1}$}. The points $\mathrm{x}$ and $-\mathrm{x}$ are \emph{the endpoints} of the horizons of $\Omega(\Lambda)$.
\end{definition}

\paragraph{Causal boundary.} The future causal boundary of $\Omega(\Lambda)$ is the boundary of $\Omega^-(\Lambda)$ in $M(-\mathrm{x})$ while its past causal boundary  is the boundary of $\Omega^+(\Lambda)$ in $M(\mathrm{x})$.

\paragraph{Black-white holes conformally flat spacetimes.} Let $\Gamma$ be a discrete group of $O_0(2,n)$ preserving $\Omega(\Lambda)$ such that
\begin{enumerate}
\item the action of $\Gamma$ on $\Omega(\Lambda)$ is free and properly discontinuous; \label{cond. 1}
\item $\Gamma$ fixes $\mathrm{x}$. \label{cond. 2}
\end{enumerate}
The point \ref{cond. 1}. insures that the quotient $M := \Omega(\Lambda) \backslash \Gamma$ is a conformally flat spacetime.
The point \ref{cond. 2}. insures that $\Gamma$ preserves the decomposition of $\Omega(\Lambda)$ as the disjoint union of the black hole $\Omega^-(\Lambda)$, the horizons $H_i$, and the white hole $\Omega^+(\Lambda)$. As a consequence, the conformally flat spacetime $M$ can be written as the disjoint union of the black hole $\mathcal{B} := \Omega^-(\Lambda) \backslash \Gamma$, the horizons $\mathcal{H}_i := H_i \backslash \Gamma$, and the white hole $\mathcal{W} := \Omega^+(\Lambda) \backslash \Gamma$. This motivates the following definition.

\begin{definition}
We call \emph{black-white hole} any GHCM conformally flat spacetime obtained as the quotient of a black-white hole domain $\Omega(\Lambda)$ of $Ein_{1,n-1}$ by a discrete subgroup of $O(2,n)$ fixing the endpoints of the horizons of $\Omega(\Lambda)$.
\end{definition}

\begin{example}
The data of a negative $P_1$-Anosov representation $\rho$ of a Gromov hyperbolic group $\Gamma$ in $O_0(2,n)$ fixing a point $\mathrm{x} \in Ein_{1,n-1}$ such that the limit set $\Lambda_{\rho}$
\begin{enumerate}
\item is contained in $\mathcal{J}^+(\mathrm{x})$,
\item is not a topological $(n-2)$-sphere;
\end{enumerate}
defines a black-while hole $\Omega(\Lambda_{\rho}) \backslash \rho(\Gamma)$ (see \cite[Theorem 5.1]{smai2022anosov}).
\end{example}

\subsubsection{Conformally flat Misner spacetimes} 

Suppose that $\Lambda$ is a $(\ell - 1)$-conformal sphere where $\ell \in \mathbb{N}^*$ such that $\ell < n - 1$. It~is defined by the data of a Lorentzian subspace $\R^{1,\ell} \subset \R^{2,n}$. Consider the orthogonal splitting $\R^{2,n} = \R^{1,\ell} \oplus^\perp \R^{1,k}$ where $k \in \mathbb{N}^*$ such that $k + \ell = n$. We denote $q_{1,\ell}$ and $q_{1,k}$ the restrictions of the quadratic form $q_{2,n}$ to $\R^{1,\ell}$ and $\R^{1,k}$ respectively; notice that $q_{2,n} = q_{1,\ell} + q_{1,k}$. The sphere $\Lambda$ is a connected component of the projectivization of the quadric 
\begin{align*}
\{(x,0): \in \R^{1,\ell} \oplus \R^{1,k}:\ q_{1,\ell}(x) = 0\}.
\end{align*}
Moreover, the projectivization of the quadric
\begin{align*}
\{(0,y): \in \R^{1,\ell} \oplus \R^{1,k}:\ q_{1,k}(y) = 0\}
\end{align*}
is the disjoint union of two antipodal conformal $(k-1)$-spheres $\mathbb{S}^{k-1}_+$ and $\mathbb{S}^{k-1}_-$ of $Ein_{1,n-1}$. Notice that $\Lambda$ is a connected component of the intersection of the lightcones of the points of $\mathbb{S}^{k-1}_+$.

\paragraph{Homogeneous model of the invisible domain.} Let $\mathcal{C}_\ell$ be the connected component of the causal cone of $\R^{1,\ell}$ defining $\Lambda$. In other words, $\Lambda$ is the projectivization of the boundary of $\mathcal{C}_{\ell}$ in $\R^{1,\ell}$. Let
\begin{align*}
\hy^\ell &= \{x \in \mathcal{C}_\ell:\ q_{1,\ell}(x) = -1\}
\end{align*}
be the hyperbolic space of dimension $\ell$ and let
\begin{align*}
dS_{1,k-1} &= \{y \in \R^{1,k}:\ q_{1,k}(y) = 1\}
\end{align*}
be the de Sitter space of dimension $k$.

\begin{proposition}
The invisible domain $\Omega(\Lambda)$ is conformally equivalent to the homogeneous space $\hy^\ell \times dS_{1,k-1}$.
\end{proposition}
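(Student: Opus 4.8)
The plan is to produce an explicit conformal diffeomorphism by reading off coordinates adapted to the orthogonal splitting $\R^{2,n} = \R^{1,\ell} \oplus^\perp \R^{1,k}$. First I would write a point of $Ein_{1,n-1}$ as an isotropic ray $[z]$ with $z = (x,y)$, $x \in \R^{1,\ell}$, $y \in \R^{1,k}$; since the splitting is orthogonal the isotropy condition $q_{2,n}(z)=0$ reads $q_{1,\ell}(x) + q_{1,k}(y) = 0$. Recall that $\Lambda$ is the projectivization of the set of nonzero null vectors in the closure of $\mathcal{C}_\ell$, sitting inside $\R^{1,\ell}\times\{0\}$, and that for $\xi_0 = [(\xi,0)]$ one has $\langle (x,y),(\xi,0)\rangle_{2,n} = \langle x,\xi\rangle_{1,\ell}$. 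Hence the defining condition of $\Omega(\Lambda)$ becomes: $\langle x,\xi\rangle_{1,\ell} < 0$ for every nonzero null $\xi$ on the boundary of $\mathcal{C}_\ell$.

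The conceptual crux is then a Lorentzian duality lemma in $\R^{1,\ell}$: a vector $x$ satisfies $\langle x,\xi\rangle_{1,\ell} < 0$ for all nonzero null $\xi \in \partial\mathcal{C}_\ell$ if and only if $x$ lies in the open cone $\mathcal{C}_\ell$ (i.e. $x$ is future timelike for the chosen time-cone). I would prove both directions by a short Cauchy--Schwarz computation in Minkowski space: the forward direction by testing $\xi = x$ when $x$ is null and by exhibiting an offending null direction when $x$ is spacelike or past-directed. It follows that $[(x,y)] \in \Omega(\Lambda)$ exactly when $q_{1,\ell}(x) < 0$ with $x$ in the time-cone, and then isotropy forces $q_{1,k}(y) = -q_{1,\ell}(x) > 0$, so $y$ is spacelike. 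Since $q_{1,\ell}(x)<0$, each such ray has a unique representative with $q_{1,\ell}(x) = -1$ (rescale by the positive factor $|q_{1,\ell}(x)|^{-1/2}$, which is legitimate in the sphere of rays), and then $q_{1,k}(y) = 1$. This defines $\Phi : \Omega(\Lambda) \to \hy^\ell \times dS_{1,k-1}$, $[(x,y)] \mapsto (x,y)$, with smooth inverse $(x,y) \mapsto [(x,y)]$: the image is isotropic because $q_{1,\ell}(x) + q_{1,k}(y) = -1 + 1 = 0$, and $x \in \mathcal{C}_\ell$ places it in $\Omega(\Lambda)$. Thus $\Phi$ is a diffeomorphism.

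It remains to check that $\Phi$ is conformal, which is the computational heart. I would use the description of the conformal structure of $Ein_{1,n-1}$ by sections of the bundle of null rays: pulling back $q_{2,n}$ along any local section of $\mathcal{N} = \{q_{2,n} = 0\}\setminus\{0\} \to Ein_{1,n-1}$ yields a metric in the conformal class, and two sections differ by a positive rescaling. This is exactly the content of the spatio-temporal decomposition, since one checks that the section $(x,y)\mapsto x+y$ attached to a timelike plane $P$ recovers $g_P = d\sigma^2(P) - d\theta^2(P)$. Now $\Phi^{-1}$ is itself such a section, $\sigma(x,y) = (x,y)$. For a tangent vector $(\dot x, \dot y)$ to $\hy^\ell \times dS_{1,k-1}$ one has $\langle x,\dot x\rangle_{1,\ell} = \langle y,\dot y\rangle_{1,k} = 0$, whence $\sigma^* q_{2,n}(\dot x,\dot y) = q_{1,\ell}(\dot x) + q_{1,k}(\dot y)$. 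As $x$ is timelike, $q_{1,\ell}$ restricts to the positive-definite hyperbolic metric on $T_x\hy^\ell$; as $y$ is spacelike, $q_{1,k}$ restricts to the de Sitter metric of signature $(1,k-1)$ on $T_y dS_{1,k-1}$. Therefore $\sigma^* q_{2,n}$ is precisely the product metric $g_{\hy^\ell} + g_{dS}$, of signature $(0,\ell)+(1,k-1) = (1,n-1)$, which represents the conformal class; this shows that $\Phi$ is a conformal equivalence onto $\hy^\ell \times dS_{1,k-1}$ equipped with its natural product structure.

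The main obstacle I anticipate is the duality lemma of the second paragraph together with the connected-component and time-orientation bookkeeping: one must consistently align the chosen time-cone $\mathcal{C}_\ell$ with the sheet $\hy^\ell$ and with the null rays forming $\Lambda$, and verify that the strict inequality defining $\Omega(\Lambda)$ carves out exactly the open cone rather than its closure. Once this set-level identification is pinned down, the conformal verification is routine given the section description of the Einstein conformal structure, and the computed representative $g_{\hy^\ell} + g_{dS}$ simultaneously confirms the signature and the product structure asserted in the statement.
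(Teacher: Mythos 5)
Your proposal is correct and follows essentially the same route as the paper: reduce the defining inequality of $\Omega(\Lambda)$ via the orthogonal splitting to the condition $\langle x,z\rangle_{1,\ell}<0$ for all $z\in\partial\mathcal{C}_\ell$, identify this with $x\in\mathcal{C}_\ell$, and normalize representatives so that $q_{1,\ell}(x)=-1$ and $q_{1,k}(y)=1$. The only difference is one of completeness, in your favor: you spell out the Lorentzian duality lemma that the paper dismisses as ``easy to see,'' and you verify conformality explicitly via the pullback of $q_{2,n}$ along the section $(x,y)\mapsto(x,y)$ of the null cone, a step the paper leaves implicit in ``the proposition follows.''
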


\begin{proof}
We denote by $<.,.>_{1,\ell}$ the bilinear form on $\R^{1,\ell}$ associated to the quadratic form $q_{1,\ell}$. Let $[x:y] \in Ein_{1,n-1}$. We have
\[\begin{array}{cccc}
[x:y] \in \Omega(\Lambda) & \Leftrightarrow & <(x;y),(z;0)>_{2,n} < 0, & \forall [z:0] \in \Lambda \\
                          & \Leftrightarrow & <x,z>_{1,\ell} < 0, & \forall z \in \partial \mathcal{C}_{\ell}
\end{array}\]
where $\partial \mathcal{C}_{\ell}$ denotes the boundary of $\mathcal{C}_{\ell}$ in $\R^{1,\ell}$. This last condition can be rephrase by saying that $x$ is in the intersection of the strict future half-spaces of $\R^{1,\ell}$ bounded by the degenerate hyperplanes $<.,z>_{1,\ell} = 0$ where $z \in \partial \mathcal{C}_{\ell}$. It is easy to see that this intersection is exactly the cone $\mathcal{C}_{\ell}$. Hence, $[x:y] \in \Omega(\Lambda)$ if and only if $x \in \mathcal{C}_{\ell}$. Up to rescaling, one can suppose that $q_{1,\ell}(x) = -1$, i.e. $x \in \hy^\ell$. Since \mbox{$0 = q_{2,n}(x;y) = q_{1,\ell}(x) + q_{1,k}(y)$,} we deduce that $q_{1,k}(y) = 1$, i.e. $y \in dS_{1,k-1}$. The proposition follows.
\end{proof}

\paragraph{Black-white hole decomposition.} Fix a point $\mathrm{x} \in \mathbb{S}^{k-1}_-$. Then, the sphere $\Lambda$ is contained in $\mathcal{J}^+(\mathrm{x})$. Hence, the description presented in Section \ref{sec: black holes} still holds here:

The invisible domain $\Omega(\Lambda)$ is the disjoint union of a black hole $B$, horizons $H_i$ foliated by complete photons and a white hole $W$. The fact that $\Lambda$ is a conformal $(\ell - 1)$-sphere implies that $B$ and $W$ are past and future Misner domains (see Lemma \ref{lemma: euclidean hyperplanes in an affine chart}). Moreover, there is a single horizon when $\ell < n - 1$ and there are exactly two horizons when $\ell = n - 2$ (see Figure \ref{figure: Misner domain}).

\begin{figure}[h!]
\centering
\includegraphics[scale=0.9]{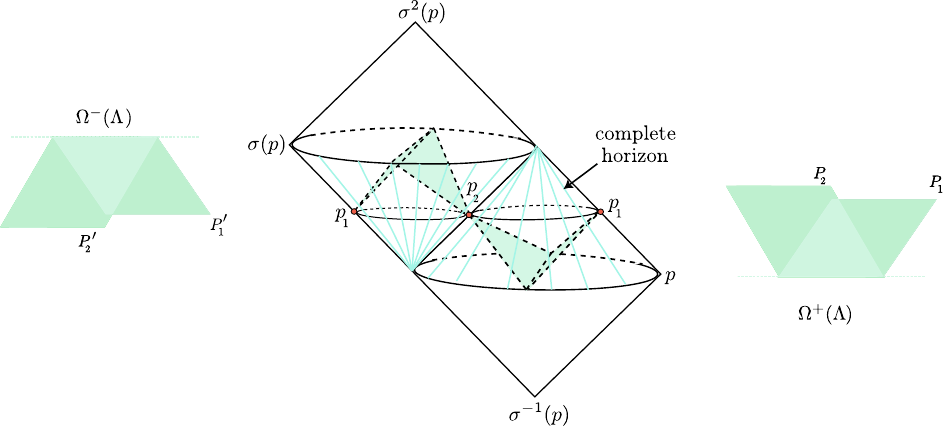}
\caption{Black-white hole decomposition of a Misner domain of $Ein_{1,2}$. The sphere $\Lambda$ is of dimension $0$: this is the union of two non-causally related points $p_1$ and $p_2$ of $Ein_{1,2}$.}
\label{figure: Misner domain}
\end{figure}

In short, every point of $\mathbb{S}^{k - 1}_+$ defines a decomposition of the invisible domain as the union of two Misner domains separated by one or two horizons. Since $\mathbb{S}^{k-1}_-$ is antipodal to $\mathbb{S}^{k-1}_+$, this statement also holds for the points of $\mathbb{S}^{k-1}_+$. This motivates the following definition.

\begin{definition}
We call \emph{Misner domain of Einstein universe} any open subset of the form $\Omega(\Lambda)$ where $\Lambda$ is a conformal $(\ell - 1)$-sphere with $0 < \ell < n - 1$.
\end{definition}

\paragraph{Causal boundary.} The future/past causal boundary of $\Omega(\Lambda)$ is the union of $\mathbb{S}^{k-1}_\pm$ and the union of the lightlike geodesics joining a point of $\mathbb{S}^{k-1}_\pm$ to a point of $\Lambda$. The set of maximal TIPs is exactly the sphere $\mathbb{S}^{k - 1}_+$ and the set of maximal TIFs is exactly the sphere $\mathbb{S}^{k-1}_+$ (see Figure \ref{figure: Misner domain}).

\paragraph{Conformally flat Misner spacetimes.} Let $\Gamma$ be a subgroup preserving the conformal sphere $\Lambda$. \emph{A priori}, $\Gamma$ does not fix $\Lambda$ point by point, so the black-while decomposition of $\Omega(\Lambda)$ described above is not preserved by $\Gamma$. However, since $\Gamma$ preserves $\Lambda$, it preserves $\mathbb{S}^k_\pm$. Hence, every element $\gamma \in \Gamma$ sends a black-while hole decomposition of $\Omega(\Lambda)$ on a conformally equivalent black-white hole decomposition. 

\begin{definition}
We call \emph{conformally flat Misner spacetime} any GHCM conformally flat spacetime obtained as the quotient of a Misner domain $\Omega(\Lambda)$ of Einstein universe by a discrete subgroup $\Gamma$ of $O(2,n)$ preserving $\Lambda$.
\end{definition}

\begin{example}
Let $O_0(1,\ell) \times O_0(1,k)$ the subgroup of $O_0(2,n)$ preserving the splitting $\R^{2,n} = \R^{1,\ell} \oplus \R^{1,k}$. Consider a negative $P_1$-Anosov representation $\rho$ of a Gromov hyperbolic group $\Gamma$ in $O_0(2,n)$ defined by a pair $(\rho_\ell, \rho_k)$ where
\begin{enumerate}
\item $\rho_\ell: \Gamma \to O_0(1,\ell)$ is a cocompact representation whose limit set is the conformal $(\ell - 1)$-sphere $\Lambda$;
\item $\rho_k: \Gamma \to O_0(1,k)$ is a relatively compact representation, i.e. the image $\rho_k(\Gamma)$ is contained in a compact of $O_0(1,k)$.
\end{enumerate}
The limit set $\Lambda_\rho$ of the representation $\rho = (\rho_\ell, \rho_k)$ is the conformal $(\ell - 1)$-sphere $\Lambda$. Hence, the quotient $\Omega(\Lambda_{\rho}) \backslash \rho(\Gamma)$ is a conformally flat Misner spacetime.
\end{example}

\paragraph{Extensions of conformally flat Misner spacetimes.} We ask the following question:
\begin{center}
\emph{Is there a discrete subgroup $\Gamma$ of $O_0(2,n)$ preserving the sphere $\Lambda$ and a causally convex open subset $\Omega'$ of $Ein_{1,n-1}$ containing strictly $\Omega(\Lambda)$?}
\end{center}
The answer to this question is yes! Anosov representations give examples of such subgroups $\Gamma$. Indeed, let $\rho$ be a negative $P_1$-Anosov representation of a Gromov hyperbolic group $\Gamma$ in $O_0(2,n)$ defined by a pair $(\rho_\ell, \rho_k)$ where
\begin{enumerate}
\item $\rho_\ell: \Gamma \to O_0(1,\ell)$ is a convex cocompact representation such that the limit set $\Lambda_{\rho_\ell}$ is \emph{strictly} contained in the conformal $(\ell - 1)$-sphere $\Lambda$;
\item $\rho_k: \Gamma \to O_0(1,k)$ is a relatively compact representation, i.e. the image $\rho_k(\Gamma)$ is contained in a compact of $O_0(1,k)$.
\end{enumerate}
The limit set $\Lambda_\rho$ of the representation $\rho$ is exactly $\Lambda_{\rho_\ell}$. Since $\Lambda_\rho \subsetneq \Lambda$, the invisible domain $\Omega(\Lambda_\rho)$ contains stricly $\Omega(\Lambda)$. Moreover, since $\rho(\Gamma)$ preserves $\Lambda$, it preserves $\Omega(\Lambda)$. \mbox{Since $\Omega(\Lambda) \subset \Omega(\Lambda_{\rho})$,} the action of $\rho(\Gamma)$ on $\Omega(\Lambda)$ is free and properly discontinuous. The quotient spacetime $M_\rho = \Omega(\Lambda_\rho) \backslash \rho(\Gamma)$ is then a GHCM conformally flat extension of the conformally flat Misner spacetime $\Omega(\Lambda) \backslash \rho(\Gamma)$. Extensions of conformally flat Misner spacetimes form a larger class of GHCM conformally flat spacetimes with complete photons.

\bibliographystyle{plain}
\bibliography{Biblio}

\end{document}